\newcommand*{\textlabel}[2]{%
  \edef\@currentlabel{#1}
  \phantomsection
  #1\label{#2}
}
\newcommand\dboxed[1]{\dbox{\ensuremath{#1}}}
\newtheorem{theorem}{Theorem}[section]
\newaliascnt{corollary}{theorem}
\newtheorem{corollary}[corollary]{Corollary}
\newaliascnt{proposition}{theorem}
\newtheorem{proposition}[proposition]{Proposition}
\newaliascnt{lemma}{theorem}
\newtheorem{lemma}[lemma]{Lemma}
\theoremstyle{definition}
\newaliascnt{definition}{theorem}
\newtheorem{definition}[definition]{Definition}
\newaliascnt{remark}{theorem}
\newtheorem{remark}[remark]{Remark}
\DeclareMathOperator{\dist}{dist}
\DeclareMathOperator{\Lip}{Lip}
\newcommand{\C}{\mathbb{C}}
\newcommand{\K}{\mathbb{K}}
\newcommand{\R}{\mathbb{R}}
\newcommand{\N}{\mathbb{N}}
\newcommand{\X}{\mathcal{X}}
\newcommand{\DD}{\mathcal{D}} 
\newcommand{\clip}{\textup{clip}}
\newcommand{\Y}{\mathcal{Y}}
\newcommand{\Z}{\mathcal{Z}}
\newcommand{\PP}{\mathcal{P}}
\newcommand{\CC}{\mathcal{C}} 
\newcommand{\FF}{\mathcal{F}}
\newcommand{\T}{T}
\newcommand{\diff}{\mathop{} \! \mathrm{d}}
\pgfplotsset{compat=1.18}
\newcommand{\raa}[1]{\renewcommand{\arraystretch}{#1}}
\pgfplotsset{compat=1.18}
\title{New universal operator approximation theorem for encoder-decoder architectures}
\author{
  Janek Gödeke \\
  Center for Industrial Mathematics\\
  University of Bremen\\
  \texttt{\href{mailto:janek-goedeke@uni-bremen.de}{janek-goedeke@uni-bremen.de}}\\
  \And
  Pascal Fernsel \\
  Center for Industrial Mathematics\\
  University of Bremen\\
  \texttt{\href{mailto:p.fernsel@uni-bremen.de}{p.fernsel@uni-bremen.de}}
}
\begin{document}
\maketitle
\begin{abstract}
Motivated by the rapidly growing field of mathematics for operator approximation with neural networks, we present a novel universal operator approximation theorem for broad classes of encoder-decoder architectures and a wide range of input and output spaces. In this study, we focus on the approximation of continuous operators between infinite-dimensional normed or metric spaces in the topology of uniform convergence on compact sets. Unlike standard results in the operator learning literature, we additionally investigate the case where the approximating sequence of encoder-decoder architectures can be chosen independently of the compact sets. Taking a topological perspective, we point out that compact-set-independent approximation is a strictly stronger property in most relevant operator learning frameworks. To establish our results, we introduce new approximation properties of input and output spaces tailored to encoder-decoder architectures. These properties enable us to prove a universal operator approximation theorem ensuring uniform convergence on every compact subset of the input space. Our results unify and extend existing universal operator approximation theorems for various encoder-decoder architectures, including classical DeepONets, BasisONets, MIONets, architectures based on frames and other related approaches. A notable feature of our framework is that it also applies to metric spaces beyond the normed setting. In particular, it allows the consideration of $p$-Wasserstein spaces of probability measures as input or output spaces, and Skorohod spaces of c\`adl\`ag functions as input spaces. This generality also opens up potential applications in optimal transport.\\[0.2cm]
\textbf{Keywords:} Universal operator approximation, encoder-decoder architectures, DeepONets, BasisONets, MIONets, uniform convergence on compacta, approximation properties of metric spaces, Wasserstein spaces, Skorohod spaces\\[0.2cm]
\textbf{MSC (2020):} 41A65, 68T07, 46B28, 49Q22
\end{abstract}

\section{Introduction}
By convention, an operator typically refers to a possibly non-linear mapping $G:D\subseteq \X\to \Y$ between infinite-dimensional normed spaces $\X$ and $\Y$. Over the past five years, using neural networks for approximating and learning such operators, particularly between function spaces, has received increasing attention. For example, operator learning has been investigated in the field of partial differential equations (PDEs) when learning parameter-to-state maps, that map the parameter function of a PDE to the corresponding solution \cite{Lu2021_DeepONet, Wang2021_PIDeepONet, Bhattacharya2021_PCANet, LI2021_FNO_Intro, Anandkumar2019_neuraloperator, Roseberry2022, Raonic2023_CNO, Tripura2023_WNO_Intro}.  
For some review on deep learning for PDEs, covering also operator learning and its application in parameter identification problems, we refer to \cite{NganyuTanyu_2023}.

For approximating operators $G$, lots of neural network architectures have been developed. One of the historical starting points has been set by T. Chen and H. Chen \cite{Chen1995_approx_thm} in 1995, whose approach has been rediscovered by Lu et al. \cite{Lu2021_DeepONet} and generalized to the well-known deep operator networks (DeepONets). DeepONets fall under the category of encoder-decoder architectures, i.e., they consist of three building blocks
\begin{align*}
    G_\theta \coloneqq D \circ \varphi \circ E,
\end{align*}
where the encoder $E:\X \to \R^m$ extracts finite information about the input, $\varphi:\R^m\to \R^n$ is a neural network and the decoder $D:\R^n\to \Y$ maps into the output space of $G$. This is illustrated in \Cref{fig:commutative_diagram}. The trainable parameters $\theta$ of $G_\theta$ belong to $\varphi$, but can also parameterize the encoder and decoder. For example, in case of DeepONets and suitable function spaces $\X$ and $\Y$, the encoder evaluates the input function at finitely many sampling points, whereas the decoder outputs a linear combination of the so-called trunk networks, see Section \ref{sec:DeepONet_classic} for more details. An example in which both the encoder and decoder are represented by neural networks, are the BasisONets presented in \cite{Hua2023}.
Further encoder-decoder approaches are, e.g., Principle Component Analysis Networks (PCANets, \cite{Bhattacharya2021_PCANet}), Multi Input Operator Networks (MIONets, \cite{Jin2022_MIONET}) which uses encoders corresponding to Schauder bases in Banach spaces, Deep-H-ONets \cite{Castro2023_ONBs} using encoders and decoders based on orthonormal bases, or the approach in \cite{Schwab2023} based on Riesz bases. Usually, the considered encoders and decoders are linear, but there are a few exceptions \cite{Seidman2022,zappala2024_projectionmethod}.

\begin{figure}[t]
    \centering
    {\Large
    \begin{tikzcd}[column sep={2.5cm, between origins}, row sep=1cm]
        \X \arrow{d}[swap]{\displaystyle E} \arrow{r}{\displaystyle G_\theta} & \Y  \\
        \R^n \arrow{r}[]{\displaystyle \varphi} & \R^m \arrow{u}[swap]{\displaystyle D}
    \end{tikzcd}
    }
    \caption{Commutative diagram of the parameterized encoder-decoder architecture $G_\theta = D \circ \varphi \circ E,$ which approximates a given operator $G:\X \to \Y.$}
    
    \label{fig:commutative_diagram}
\end{figure}

As alternatives to encoder-decoder approaches we mention here, e.g., Neural Operators (NOs, \cite{Anandkumar2019_neuraloperator}), Fourier Neural Operators (FNOs, \cite{LI2021_FNO_Intro}), Wavelet Neural Operators (WNOs, \cite{Tripura2023_WNO_Intro}), Representation Invariant Neural Operators (ReNOs, \cite{Bartolucci2023, Raonic2023_CNO}), Injective Integral Neural Operators \cite{furuya2023_injectiveNO}, Causal Neural Operators \cite{Galimberti_2026} or Optimal Transport Neural Operators \cite{Kovachki2025_OTNO}. Some overview on operator learning methods can be found in \cite{Kovachki2024_review}. 

\subsection{Related work on operator approximation theory}
On the theoretical side, a fundamental question is which classes of operators $G:\X\to \Y$ can be approximated by neural networks and under which topology. 
When both $\X$ and $\Y$ are finite dimensional spaces ($\R^d$ or $\X$ being a subset thereof) this question falls within the well-established field of function approximation, which is not the primary focus of our study. Nevertheless, extensive results exist on the approximation properties of neural networks in various function spaces, including spaces of continuously differentiable functions, Lebesgue spaces, and Sobolev spaces, see for example \cite{Cybenko1989, Hornik1991, Barron1993, Pinkus_1999, yarotsky2017error, DeVore2021relu_overview}. Regarding the field of operator approximation, so when $\X$ or $\Y$ are infinite-dimensional, we recap below the two most commonly studied types of approximation, where the first is the focus of our study.

\textbf{Uniform convergence on compacta.}
Let $G:\X\to \Y$ be a continuous operator and $K\subseteq \X$ be compact. The goal is to construct a sequence of approximants $G_n:\X\to \Y$, such as encoder-decoder architectures as described above, that converges uniformly to $G$ on $K$. That is, 
\begin{align*}
    \sup_{f\in K} \big \Vert G(f) - G_n(f) \big \Vert \xrightarrow{n\to\infty} 0.
\end{align*} 
Universal approximation theorems have been derived for a variety of neural network architectures and choices of the spaces $\X$ and $\Y$. Such theorems state that every continuous operator $G$ can be approximated uniformly on compact subsets of $\X$ by approximants $G_n$ from the corresponding architecture class, where $G_n$ usually depends on the respective compact set.
Universal approximation results have been derived for
\begin{itemize}
    \item DeepONets when $\X$ and $\Y$ are spaces $\CC(\Omega, \R)$ of continuous real-valued functions on compact domains $\Omega\subset \R^d$ \cite{Chen1995_approx_thm, Lu2021_DeepONet};
    \item MIONets when $\X$ is a Banach space having Schauder bases and $\Y=\CC(\Omega, \R)$ \cite{Jin2022_MIONET};
    \item Riesz-basis encoder-decoder networks when $\X$ and $\Y$ are separable Hilbert spaces \cite{Schwab2023};
    \item FNOs when $\X$ and $\Y$ are Sobolev spaces $W^{s,2}(\Omega, \R)$ with smoothness $s\geq 0$ \cite{Kovachki2021_approxFNO};
    \item NOs when $\X$ and $\Y$ are Sobolev spaces $W^{s,p}(\Omega, \R)$ (with $1\leq p< \infty$ and $s\geq 0$) or spaces $\CC^k(\Omega, \R)$ of continuously differentiable functions \cite{Kovachki2023, Lanthaler2024_ANO};
    \item Injective Integral Neural Operators when $\X$ and $\Y$ are Lebesgue spaces $L^2(\Omega, \R)$ \cite{furuya2023_injectiveNO};
    \item Neural operators based on nonlinear projection operators when $\X$ and $\Y$ are Banach spaces \cite{zappala2024_projectionmethod};
    \item Transformers when $\X=\PP(\Omega)\times \Omega$ and $\Y=\R^k$, where $\Omega\subseteq \R^d$ is compact and $\PP(\Omega)$ denotes the set of Borel probability measures equipped with any $p$-Wasserstein distance for $p\geq 1$ \cite{furuya2024}.
    \item Neural Filters when $\X$ and $\Y$ are Fr\'echet spaces having Schauder bases \cite[Theorem 1]{Galimberti_2026}. Related results when $\X$ is a Fr\'echet space having a Schauder basis and when $\Y$ is a Banach space can be found in \cite[Section 4]{Benth_2022}.
\end{itemize}
Although approximation theorems are typically stated for spaces of $\R$-valued functions, they often naturally extend to $\R^a$-valued functions.

\textbf{Approximation in Bochner spaces.}
Let $\X$ be a measurable space, $\Y$ a Banach space and $\mu$ a probability measure on $\X$. For a given operator $G$ belonging to the Bochner space $L^p(\X, \Y; \mu)$, one is looking for a sequence $G_n$, being representable by neural networks, that converges to $G$ in the Bochner norm (for suitable $1\leq p <\infty)$, i.e.,
\begin{align*}
    \big \Vert G - G_n\big \Vert_{L^p(\X, \Y; \mu)} = \left(\int_{\X} \big \Vert G(f) - G_n (f) \big \Vert^p \diff \mu(f)\right)^{1/p} \xrightarrow{n\to\infty} 0.
\end{align*}
Also for convergence in Bochner space, universal approximation theorems have been derived for different architectures, e.g., for DeepONets \cite{Lanthaler2022}, PCANets \cite{Bhattacharya2021_PCANet}, Riesz-basis encoder-decoder networks \cite{Schwab2023}, FNOs \cite{Kovachki2021_approxFNO}, NOs \cite{Kovachki2023, Lanthaler2024_ANO} or Deep-H-Onets \cite{Castro2023_ONBs}.

For both types of approximation, there do not only exist universal approximation theorems, but also results on estimating the required size of the neural network to achieve a desired approximation accuracy. Typically, specific input spaces of the operator $G$ and additional assumptions on $G$ are required, e.g., that $G$ arises from specific PDEs, is holomorphic, or Lipschitz/Hölder continuous, see e.g., \cite{Lanthaler2022, Kovachki2021_approxFNO, Cohen_DeVore_2015, Liu2024, Schwab2023, kratsios2023, Galimberti_2026}. For more information about this important field of research we refer to the review \cite{Kovachki2024_review}, in which also the curse of dimensionality is discussed.
Within this work, we restrict our attention to universal approximation results only. 

There are also other types of approximating operators. For example, in \cite[Theorem 3.3]{kratsios2023} continuous operators $G$ between Polish spaces $\X, \Y$ are approximated by encoder-decoder architectures with respect to the 1-Wasserstein metric.
Last but not least, in \cite{Korolev2022}, dual spaces $\X$ and $\Y$ of separable Banach spaces were considered. If $\Y$ is equipped with the (metrizable) weak$^\ast$-topology, a sequence of generalized ReLU-networks has been found that uniformly approximates a given operator (belonging to some variation norm space) on bounded sets. Further, approximation in Bochner spaces $L^p(\X, (\Y, d_\ast); \mu)$ with suitable probability measures $\mu$ was shown.

\subsection{Contribution}\label{sec:Contribution}
To derive universal operator approximation results, we consider the space $\CC(\X, \Y)$ of continuous mappings between normed or metric spaces $\X$ and $\Y$. Throughout this work, we study approximation in the topology on $\CC(\X, \Y)$ induced by the uniformity of uniform convergence on compact sets; see \cite{Engelking_Topologie} for a detailed definition. With respect to this topology, there are two distinct notions of approximating any continuous operator $G\in \CC(\X, \Y)$ by some class $S$ of approximators $G_n:\X\to \Y$. These notions are illustrated in \Cref{fig:approximation_types}. In this work, we focus on $S$ being diverse classes of encoder-decoder architectures.
\begin{figure}[ht]
    \begin{tcolorbox}[width=0.85\textwidth, center,arc=13pt, colback=gray!10]
        \vspace{-0.35cm}
        \hspace{0.3cm}
        $\displaystyle
            \begin{tabular}{p{.535\textwidth}}
                \begin{enumerate}[leftmargin=0.2cm, , label=(\textbf{\Alph*})]
                    \item $\displaystyle \hspace{0.25cm} \forall G\in \CC(\X, \Y) \hspace{0.3cm} \boxed{\forall K\subseteq \X \textup{ compact }} \hspace{0.2cm} \dboxed{\exists G_n \in S} \vspace{0.3cm}$ \label{fig:approximation_types:(A)}
                    \item $\displaystyle \hspace{0.25cm} \forall G\in \CC(\X, \Y) \hspace{0.3cm}\dboxed{\exists G_n \in S}\ \ \ \boxed{\forall K\subseteq \X \textup{ compact }\hspace{-0.1cm}}$ \label{fig:approximation_types:(B)}
                \end{enumerate}
            \end{tabular} 
            \hspace{0.8cm}
            \raisebox{-0.1cm}{{\scalebox{1.5}{\Bigg\}}}} \hspace{0.3cm} \raisebox{-0.08cm}{$\begin{aligned}
                \sup_{f\in K} d_\Y &\big(G (f)\, ,\, G_n (f)\big)\\[-0.1cm]
                &\xrightarrow{n\to\infty} 0.
            \end{aligned}$}
        $
        
        \vspace{-0.2cm}
    \end{tcolorbox}
    \caption{Different types of universal operator approximation in $\CC(\X, \Y)$ by some class $S$ of approximators.}
    \label{fig:approximation_types}
\end{figure}
Our main contributions to the approximation theory of encoder-decoder architectures in the topology of uniform convergence on compact sets are as follows:
\begin{itemize}
    \item \textbf{Proof of statement \labelcref{fig:approximation_types:(B)}.}\\
    In most other studies, statement \labelcref{fig:approximation_types:(A)} in \Cref{fig:approximation_types} is shown for $S$ being, for example, DeepONets or (F)NOs. That is, for every compact $K\subseteq\X$ one can find a sequence $(G_n)_{n\in \N}$ in $S$ that converges uniformly to a given operator $G\in \CC(\X, \Y)$ on $K$. However, can the sequence $G_n$ be chosen independently of the compact set $K$? To the best of our knowledge, there exists only one result by Schwab et al. \cite[Theorem 3.1]{Schwab2023}, who have shown this statement \labelcref{fig:approximation_types:(B)} for $S$ being encoder-decoder networks based on Riesz bases in separable Hilbert spaces $\X$ and $\Y$.
    We show in our main \Cref{thm:sequential_density_normed} that statement \labelcref{fig:approximation_types:(B)} - which is the major focus of our study - can indeed be achieved by much more diverse choices of encoder-decoder architectures. For example, our theory also covers classical DeepONets \cite{Chen1995_approx_thm, Lu2021_DeepONet, Lanthaler2022}, special cases of MIONets \cite{Jin2022_MIONET}, BasisONets \cite{Hua2023} and Deep-H-ONets \cite{Castro2023_ONBs}.\newline  
    Some advantage of \labelcref{fig:approximation_types:(B)} over \labelcref{fig:approximation_types:(A)} is that it could simplify the derivation of approximation theorems in Bochner spaces. This is because \labelcref{fig:approximation_types:(B)} implies pointwise convergence, allowing one to invoke the dominated convergence theorem. We leave this application in Bochner approximation for future research. 
    
    \item \textbf{Proof that \labelcref{fig:approximation_types:(B)} really is stronger than \labelcref{fig:approximation_types:(A)}.}\\
    A natural question arises: Is \labelcref{fig:approximation_types:(B)} really a stronger statement than \labelcref{fig:approximation_types:(A)} for metric spaces $\X, \Y$? In other words, is there a set $S$ of approximators that satisfies \labelcref{fig:approximation_types:(A)} but not \labelcref{fig:approximation_types:(B)}? The answer is yes if and only if $\X$ is not hemicompact. In particular, the answer is yes if the input space $\X$ is any infinite-dimensional normed space, which is typically the case for operator approximation tasks. In the case of $\Y=\R$ and any separable, metrizable topological space $\X,$ this question has been answered in \cite[Corollary 3.7]{Osipov2018}, which easily extends to any finite dimensional $\Y$. However, we could not find a result for infinite-dimensional $\Y$ within the literature. Therefore, we provide a compact proof in \Cref{thm:dense_Frechet_hemicompact} for any metric space $\X$ and normed space $\Y$. Nevertheless, these topological considerations are not required for understanding the other parts of this study.
    
    \item \textbf{One Theorem for many architectures and spaces.}\\
    Within the literature, approximation results are often shown individually for different operator learning architectures. This means that an approximation theorem for a certain architecture class may not directly apply to a different architecture class, see for example \Cref{rem:deeponet_vs_mionet}.
    In contrast, we provide theorems that cover many classes of encoder-decoder architectures, as well as diverse normed and metric input and output spaces $\X$ and $\Y$, at once: For approximation as in statement \labelcref{fig:approximation_types:(B)} we provide our main \Cref{thm:sequential_density_normed}, whereas for the weaker statement \labelcref{fig:approximation_types:(A)} we provide \Cref{thm:density_metric} and \Cref{thm:mionet}. Applicability to well-known architectures is outlined in Section \ref{sec:examples_architectures}.
    Noteworthy, in \cite[Theorem 3.7, Lemma 2.6]{kratsios2023} and \cite[Proposition 4.15]{galimberti2024} the authors proved statement \labelcref{fig:approximation_types:(A)}, but not \labelcref{fig:approximation_types:(B)} (whenever $\X$ is not hemicompact), for broad classes of encoder-decoder architectures and diverse metric/topological input and output spaces. Nevertheless, neither their results nor \cref{thm:density_metric} are more general than the other regarding possible choices of encoders and decoders, see \cref{rem:comparison_galimberti}.

    \item \textbf{Identify sufficient properties of input and output spaces.}\\
    Regarding statement \labelcref{fig:approximation_types:(B)}, our results significantly extend the existing theory for encoder-decoder architectures regarding possible input and output spaces, as well as possible encoder and decoder constructions. Regarding the theory related to statement \labelcref{fig:approximation_types:(A)}, we also make contributions, although the existing literature in this area is more extensive.
    For that, we introduce in Sections \ref{sec:approximation_properties} and \ref{sec:CEDAP} sufficient properties of the normed or metric input and output spaces $\X$ and $\Y$, which enable approximation of continuous operators by encoder-decoder architectures as in statements \labelcref{fig:approximation_types:(B)} and \labelcref{fig:approximation_types:(A)}, respectively. 
    Inspired by the (bounded) approximation property of normed spaces, our conditions require approximation of the identity mappings on the input and output spaces by encoder-decoder pairs rather than by linear finite-rank operators. The corresponding encoders and decoders can then be used directly in the approximation \Cref{thm:sequential_density_normed,thm:density_metric,thm:mionet}, which allows for a straight-forward application to many classes of architectures. Further, we relax several assumptions often imposed in the operator-learning literature. In particular, our framework accommodates metric spaces, permits nonlinear encoders and decoders, allows encoders to be discontinuous, and does not require decoders to be Lipschitz continuous.  We give examples of metric spaces along with natural, constructive choices of encoders and decoders that benefit from these relaxations, see the point below.

    \item \textbf{Theory covers metric spaces: $p$-Wasserstein and Skorohod spaces.}\\
    To the best of our knowledge, the existing operator learning theory for statement \labelcref{fig:approximation_types:(B)} handles operators between separable Hilbert spaces. Our frameworks for both \labelcref{fig:approximation_types:(B)} and \labelcref{fig:approximation_types:(A)} additionally applies to every normed space possessing the (bounded) approximation property, including, for example, Lebesgue spaces, Sobolev spaces, and spaces of continuously differentiable functions. In addition, also metric spaces can be treated as input or output spaces within our frameworks.
    For example, we prove that spaces of probability measures $\PP_p(\Omega)$ equipped with the $p$-Wasserstein ($p\geq1$) distance as well as Fr\'echet spaces having Schauder bases can be treated as input and output spaces. Moreover, we show that Skorohod spaces of c\`adl\`ag functions equipped with the Skorohod topology can be treated as input spaces in our theory.
    In many spaces, explicit and natural constructions of suitable encoders and decoders are possible, which we outline in Sections \ref{sec:normed_examples_EDAP} and \ref{sec:metric_EDAP_spaces} for normed and metric spaces, respectively.
    Note that some of our natural encoders for Wasserstein and Skorohod spaces are currently not covered by the existing theory even for statement \labelcref{fig:approximation_types:(A)}, which is due to their discontinuity, see also \cref{rem:comparison_galimberti}.
    Furthermore, in Section \ref{sec:applications_OT} we point out some potential applications in optimal transport, for learning operators $\PP_p(\Omega)\to \Y$ with encoder-decoder architectures. In particular, we discuss how \Cref{thm:mionet} provides a possible route towards a first approximation theorem for geodesic operator networks \cite{Gracyk_geonet}.
\end{itemize}

\subsection{Outline}
In \Cref{sec:topology}, we discuss the topological difference between statements \labelcref{fig:approximation_types:(A)} and \labelcref{fig:approximation_types:(B)} shown in \Cref{fig:approximation_types}. We highlight that \labelcref{fig:approximation_types:(B)} is generally a stronger result in most contexts relevant to operator approximation. Nevertheless, this section is not required for understanding the remainder of this work.
In Section \ref{sec:approximation_properties}, we introduce sufficient properties of the input and output spaces $\X$ and $\Y$ that enable the approximation of continuous operators $G\in \CC(\X, \Y)$ by encoder-decoder architectures as in statement \labelcref{fig:approximation_types:(B)}.
We also discuss several examples of normed and metric spaces satisfying these properties and present explicit choices of encoders and decoders that can be used within our approximation theorems.
In Section \ref{sec:approx_theorem}, we establish our main universal approximation results: \Cref{thm:sequential_density_normed}, which yields approximation as in statement \labelcref{fig:approximation_types:(B)}, and \Cref{thm:density_metric} for approximation as in \labelcref{fig:approximation_types:(A)}.
In Section \ref{sec:examples_architectures}, we discuss the applicability of our theoretical framework to existing encoder-decoder architectures and show how our results unify and extend several universal approximation theorems from the literature. Finally, in Section \ref{sec:applications_OT}, we discuss some applications in optimal transport. This is motivated by the fact that our theoretical framework allows the consideration of $p$-Wasserstein spaces as input spaces.

\subsection{Notation}
Given a metric $d$ on a set $\X$, we write $(\X, d)$ for the corresponding metric space. We sometimes omit $d$ in the notation. Given a metric space $(\X, d)$, the open ball of radius $r>0$ around some $x\in \X$ is denoted by $B_r(x;d)$. If there is no ambiguity in the choice of $d$, we write $B_r(x).$ The closed ball is written as $\overline{B}_r(x)$. Whenever the symbol $\K$ is used, it is meant that the according definitions or results are valid for both fields $\K=\R$ or $\K=\C.$ All vector spaces considered within this study are meant to be over the field(s) $\K$.
For metric spaces $\X$ and $\Y$, we denote by $\CC(\X, \Y)$ the space of continuous mappings $\X \to \Y$, and by $\CC_b(\X, \Y)$ its subset of bounded continuous mappings. Whenever $\Y$ is a vector space, both spaces are regarded as vector spaces with pointwise addition and scalar multiplication. Finally, we denote the Euclidean norm on $\K^n$ as $\vert\cdot \vert$. Nevertheless, due to the equivalence of norms, also different choices of norms are possible throughout our study. The expressions $\vert \cdot \vert_1$ and $\vert\cdot\vert_\infty$ are used for the $\ell^1$- and $\ell^\infty$-norm, respectively.

\section{Perspective from the compact-open topology}\label{sec:topology}
In this section, we will have a closer look on the topological difference between the two types of operator approximation given in \Cref{fig:approximation_types} by a set $S$ of approximants. For simplicity, we assume that $S\subseteq \CC(\X, \Y)$. In the following two sections, we recall what the statements \labelcref{fig:approximation_types:(A)} and \labelcref{fig:approximation_types:(B)} in \Cref{fig:approximation_types} mean in terms of the so called compact-open topology. 
Finally, in Section \ref{sec:denseFU_FU_not_equivalent} we show that \labelcref{fig:approximation_types:(B)} is indeed a stronger statement in general, at least for settings relevant for operator approximation tasks, in which $\X$ and $\Y$ are typically infinite-dimensional normed (function) spaces.
Regarding the definition of the compact-open topology, we follow \cite[Chapter 3.4]{Engelking_Topologie}.
\begin{definition}[Compact-open topology]\label{def:compact_open_topology}
    Let $\X$ and $\Y$ be topological spaces. For compact $K\subseteq \X$ and open $V\subseteq \Y$ associate the set
    \begin{align*}
        U_{K, V} \coloneqq \{ G\in \CC(\X, \Y): G(K) \subseteq V\}.
    \end{align*}
    The compact-open topology on $\CC(\X, \Y)$ is the topology that has all such $U_{K, V}$ as a subbase. In other words, it is the coarsest topology on $\CC(\X, \Y)$ containing all $U_{K, V}$.
\end{definition}

It is to be mentioned that the compact-open topology coincides with the topology induced by the uniformity of uniform convergence on compacta, e.g., when $\X$ and $\Y$ are metric spaces \cite[Theorem 8.2.6]{Engelking_Topologie}. Nevertheless, we choose the perspective from the compact-open topology, as its definition is straightforward and since the studies we reference use this perspective, too.

\subsection{Density}
In this section, we recall in \Cref{thm:density_compact_open_topology} that statement \labelcref{fig:approximation_types:(A)} in \Cref{fig:approximation_types} is equivalent to $S\subseteq \CC(\X, \Y)$ being dense with respect to the compact-open topology. We start with basic definitions of adherent points and dense sets in topological spaces.
\begin{definition}[Adherent point]\label{def:adherent_point}
    Let $(\Z, \mathfrak{T})$ be a topological space and $S\subseteq\Z$. A point $x\in \Z$ is called adherent point of $S$ (in the topology $\mathfrak{T})$ if for every open neighborhood $U\in \mathfrak{T}$ of $x$ it holds that $S\cap U \neq \emptyset$.
\end{definition}

\begin{definition}[Density]
    Let $(\Z, \mathfrak{T})$ be a topological space. A subset $S\subseteq\Z$ is called dense in $\Z$ if every $z\in \Z$ is an adherent point of $S$.
\end{definition}
In other words, $S\subseteq \Z$ is dense if its closure is the whole space $\Z$. The following Lemma characterizes adherent points in the compact-open topology. It seems to be a well-known fact, but we could not find an explicit statement within the literature. Therefore, for the interested reader, we provide a proof in the appendix (see \Cref{subsec:proof_adherent_point_compact_open_topology}).
\begin{lemma}\label{lem:adherent_point_compact_open_topology}
    Let $\X$ and $(\Y, d_\Y)$ be metric spaces and consider a subset $S\subseteq \CC(\X, \Y)$. A mapping $G\in \CC(\X, \Y)$ is an adherent point of $S$ with respect to the compact-open topology if and only if for every compact $K\subseteq \X$ there exists a sequence $(G_n)_{n\in \N}$ in $S$ such that 
        \begin{align*}
            \sup_{x\in K} d_\Y\big(G(x)\, ,\, G_n(x) \big) \xrightarrow{n\to\infty} 0.
        \end{align*}
\end{lemma}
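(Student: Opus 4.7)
The plan is to prove both directions directly from the subbase description of the compact-open topology, translating between adherence at $G$ and uniform closeness on compacta.

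For the forward implication, fix a compact $K \subseteq \X$ and $\varepsilon > 0$; it suffices to produce some $H \in S$ with $\sup_{x \in K} d_\Y(G(x), H(x)) < \varepsilon$, since specialising $\varepsilon = 1/n$ then extracts the desired sequence. I would cover $K$ by finitely many compact pieces on which $G$ barely varies. For each $x \in K$, continuity of $G$ supplies $\delta_x > 0$ with $G(\overline{B_{\delta_x/2}(x)} \cap K) \subseteq \overline{B}_{\varepsilon/4}(G(x))$, and compactness of $K$ yields finitely many centres $x_1, \ldots, x_N$ whose open balls $B_{\delta_{x_i}/2}(x_i)$ cover $K$. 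Setting $K_i := \overline{B_{\delta_{x_i}/2}(x_i)} \cap K$ (compact) and $V_i := B_{\varepsilon/2}(G(x_i))$ (open) makes $\bigcap_{i=1}^N U_{K_i, V_i}$ a basic open neighbourhood of $G$ in the compact-open topology, which by the adherence hypothesis meets $S$ in some $H$. For any $x \in K$ we have $x \in K_i$ for some $i$, so both $G(x)$ and $H(x)$ lie in $V_i$, whence $d_\Y(G(x), H(x)) < \varepsilon$.

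For the converse, let $U$ be any open neighbourhood of $G$ in the compact-open topology. By Definition \ref{def:compact_open_topology}, $U$ contains a basic set $\bigcap_{i=1}^N U_{K_i, V_i}$ that also contains $G$, so $G(K_i) \subseteq V_i$ for each $i$. The key move is to upgrade these soft inclusions to uniform quantitative buffers: since each $G(K_i)$ is compact and contained in the open set $V_i$, the continuous map $y \mapsto d_\Y(y, \Y \setminus V_i)$ attains a strictly positive minimum $\varepsilon_i$ on $G(K_i)$ (interpreting distance to the empty set as $+\infty$ in the trivial case $V_i = \Y$). Setting $\varepsilon := \min_i \varepsilon_i > 0$ and applying the hypothesis to the compact set $K := \bigcup_i K_i$ yields a sequence $(G_n) \subseteq S$ with $\sup_{x \in K} d_\Y(G(x), G_n(x)) < \varepsilon$ for $n$ large enough, which immediately forces $G_n(K_i) \subseteq V_i$ for every $i$, hence $G_n \in U \cap S$.

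The main subtlety I expect is this uniform thickening step in the reverse direction: turning the pointwise condition $G(K_i) \subseteq V_i$ into a positive buffer $\varepsilon_i$ so that the uniform convergence hypothesis can actually be invoked. The forward implication is essentially the standard continuity-plus-finite-subcover bookkeeping, and the overall argument requires neither completeness nor separability of $\X$ or $\Y$.
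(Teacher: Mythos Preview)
Your proof is correct and follows essentially the same approach as the paper's: both directions use the finite-subcover-plus-triangle-inequality argument for the forward implication and the compact-set-in-open-set positive-distance buffer for the converse. The only differences are cosmetic (your choice of $\varepsilon/4,\varepsilon/2$ versus the paper's $\varepsilon,2\varepsilon$, and your phrasing of the buffer via $y\mapsto d_\Y(y,\Y\setminus V_i)$ rather than invoking a separate corollary).
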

An immediate consequence of the preceding lemma is the following characterization of dense sets. 
\begin{theorem}\label{thm:density_compact_open_topology}
    Let $\X$ and $(\Y, d_\Y)$ be metric spaces. For a subset $S\subseteq \CC(\X, \Y)$ the following statements are equivalent:
    \begin{itemize}
        \item[(i)] $S$ is dense in $\CC(\X, \Y)$ with respect to the compact-open topology.
        \item[(ii)] For every $G\in \CC(\X, \Y)$ and compact $K\subseteq \X$ there exists a sequence $(G_n)_{n\in \N}$ in $S$ such that 
        \begin{align*}
            \sup_{x\in K} d_\Y\big(G(x)\, ,\, G_n(x) \big) \xrightarrow{n\to\infty} 0.
        \end{align*}
    \end{itemize}
    Note that the choice of the sequence $(G_n)_{n\in\N}$ may depend on $K$.
\end{theorem}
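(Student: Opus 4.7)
The plan is to derive Theorem \ref{thm:density_compact_open_topology} as a direct corollary of Lemma \ref{lem:adherent_point_compact_open_topology}, with essentially no additional work required. Density of $S$ in a topological space is, by definition, the statement that every point of the space is an adherent point of $S$. Thus I would begin by unfolding (i): $S$ is dense in $\CC(\X,\Y)$ with respect to the compact-open topology if and only if every $G\in \CC(\X,\Y)$ is an adherent point of $S$ in that topology.

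Next, I would substitute the characterization provided by Lemma \ref{lem:adherent_point_compact_open_topology}, which says that $G\in \CC(\X,\Y)$ is an adherent point of $S$ (with respect to the compact-open topology) if and only if, for every compact $K\subseteq \X$, there exists a sequence $(G_n)_{n\in\N}$ in $S$ with $\sup_{x\in K} d_\Y(G(x), G_n(x)) \to 0$. Plugging this equivalent condition into the unfolded form of (i) produces exactly statement (ii), where the universal quantifier over $G$ is carried across from the definition of density. Since every step is an if-and-only-if, the two directions of the equivalence (i) $\Leftrightarrow$ (ii) follow simultaneously.

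There is no genuine obstacle here; the nontrivial content has already been absorbed into Lemma \ref{lem:adherent_point_compact_open_topology}. The only point I would emphasize in the writeup is the note already appended to the theorem statement, namely that the sequence $(G_n)_{n\in\N}$ is allowed to depend on the compact set $K$. This is precisely what distinguishes statement (ii) (equivalently, \ref{fig:approximation_types:(A)} in Figure \ref{fig:approximation_types}) from the stronger compact-set-independent notion \ref{fig:approximation_types:(B)}, and it is this distinction that motivates the rest of the paper.
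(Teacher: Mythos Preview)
Your proposal is correct and matches the paper's own treatment exactly: the paper states that Theorem \ref{thm:density_compact_open_topology} is ``an immediate consequence of the preceding lemma,'' and your argument---unfolding density as ``every point is adherent'' and then invoking Lemma \ref{lem:adherent_point_compact_open_topology}---is precisely that immediate consequence.
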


\subsection{Sequential density}
We have seen that with a dense set $S\subseteq \CC(\X, \Y)$ one can approximate every $G\in \CC(\X, \Y)$ on any compact $K\subseteq \X$ by a sequence, which depends on $K$. If we require these sequences to be independent on $K$, so if statement \labelcref{fig:approximation_types:(B)} is supposed to hold, we will recall in \Cref{thm:sequential_density_compact_open_topology} below that this is equivalent to sequential density of $S$ in $\CC(\X, \Y)$ with respect to the compact-open topology. 
\begin{definition}[Convergence of sequences]
    Let $(\Z,  \mathfrak{T})$ be a topological space. A sequence $(x_n)_{n\in\N}$ in $\Z$ converges to some $x\in \Z$ if for every open neighborhood $U\in \mathfrak{T}$ there is an $N\in \N$ such that for all $n\geq N$ it is $x_n\in U.$
\end{definition}

\begin{definition}[Sequential density]
    Let $(\Z,  \mathfrak{T})$ be a topological space. A subset $S\subseteq \Z$ is called sequentially dense in $\Z$ if for each $x\in \Z$ there is a sequence $(x_n)_{n\in \N}$ in $S$ that converges to $x$. That is, for each open neighborhood $U\in \mathfrak{T}$ of $x$ there exists an $N\in \N$ such that for all $n\geq N$ it holds that $x_n\in U$.
\end{definition}

In other words, a set $S$ is sequentially dense if its sequential closure is the whole space. The following characterization of convergent sequences in the compact-open topology can be found, for example, in \cite[Chapter XII.7]{Dugundji_Topologie}, which immediately allows for a characterization of sequentially dense sets in \Cref{thm:sequential_density_compact_open_topology}.
\begin{lemma}\label{lem:convergence_compact_open_topology}
    Let $\X$ and $(\Y, d_\Y)$ be metric spaces. A sequence $(G_n)_{n\in \N}$ in $\CC(\X, \Y)$ converges to some $G\in \CC(\X, \Y)$ in the compact-open topology if and only if for every compact $K\subseteq \X$ it holds that 
    \begin{align*}
        \sup_{x\in K} d_\Y\big(G(x)\, ,\, G_n(x) \big) \xrightarrow{n\to\infty} 0.
    \end{align*}
\end{lemma}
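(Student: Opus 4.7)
The plan is to handle the two directions of this characterization separately, in both cases using only the subbase definition of the compact-open topology together with standard compactness/continuity arguments in metric spaces.

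For the direction $(\Rightarrow)$, I would fix a compact $K\subseteq \X$ and an arbitrary $\varepsilon>0$ and build a finite collection of subbase open sets that pin down uniform approximation to within $\varepsilon$. Concretely, for each $x\in K$, continuity of $G$ gives an open neighborhood $W_x\subseteq \X$ with $G(\overline{W_x})\subseteq \{y:d_\Y(y,G(x))\le \varepsilon/3\}\subseteq B_{\varepsilon/2}(G(x))$ (using that in a metric space the closure of an open ball lies inside any strictly larger closed ball, together with continuity of $G$). Covering $K$ by finitely many $W_{x_1},\dots,W_{x_m}$ by compactness, I set $K_i\coloneqq K\cap \overline{W_{x_i}}$ (compact) and $V_i\coloneqq B_{\varepsilon/2}(G(x_i))$ (open); by construction $G\in \bigcap_{i=1}^m U_{K_i,V_i}$, which is an open neighborhood of $G$ in the compact-open topology. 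Convergence $G_n\to G$ therefore gives some $N$ with $G_n(K_i)\subseteq V_i$ for all $i$ and all $n\ge N$. For any $x\in K$, picking $i$ with $x\in W_{x_i}\subseteq K_i$ and applying the triangle inequality yields $d_\Y(G(x),G_n(x))<\varepsilon/3+\varepsilon/2<\varepsilon$, so $\sup_{x\in K} d_\Y(G(x),G_n(x))\to 0$.

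For the direction $(\Leftarrow)$, I would unfold the definition of the compact-open topology. Given any open neighborhood $U$ of $G$, there exist finitely many subbase sets $U_{K_1,V_1},\dots,U_{K_m,V_m}$ with $G\in \bigcap_{i=1}^m U_{K_i,V_i}\subseteq U$, so $G(K_i)\subseteq V_i$ for each $i$. Since $G(K_i)$ is compact (continuous image of compact), and $V_i$ is open with $G(K_i)\subseteq V_i$, the standard metric-space fact that a compact set inside an open set has a positive separation from the complement produces $\delta_i>0$ such that the $\delta_i$-neighborhood of $G(K_i)$ lies in $V_i$ (or $V_i=\Y$, in which case the condition is automatic). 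The assumed uniform convergence on the compact set $K_i$ then yields $N_i$ with $\sup_{x\in K_i} d_\Y(G(x),G_n(x))<\delta_i$ for $n\ge N_i$, giving $G_n(K_i)\subseteq V_i$. Taking $N=\max_i N_i$, one obtains $G_n\in \bigcap_{i=1}^m U_{K_i,V_i}\subseteq U$ for all $n\ge N$, which is exactly convergence in the compact-open topology.

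I do not anticipate a serious obstacle here; the statement is the classical compatibility between the compact-open topology and uniform-on-compacta convergence in the metric setting. The only mildly delicate step is the $(\Rightarrow)$ construction, because one has to pass from the pointwise continuity data of $G$ to compact pieces $K_i$ whose images under $G$ lie strictly inside open balls $V_i$; the shrinking trick (radius $\varepsilon/3$ inside the closed neighborhood, radius $\varepsilon/2$ for the open ball in the subbase set) is what guarantees that $G$ lies in the interior of $U_{K_i,V_i}$ rather than on its boundary, so that convergence in the compact-open topology can be applied. Everything else is standard compactness and triangle-inequality bookkeeping.
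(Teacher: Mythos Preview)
Your proof is correct and follows the same standard argument that the paper invokes (it cites Dugundji, Chapter~XII.7, rather than proving the lemma directly) and essentially reproduces in its appendix proof of the closely related Lemma~\ref{lem:adherent_point_compact_open_topology} on adherent points, including the same ``compact set inside an open set has positive separation'' step (the paper's Corollary~\ref{coro:compact_set_uniform_extension}). One minor slip: in the $(\Rightarrow)$ direction you write ``$x\in W_{x_i}\subseteq K_i$'', but $W_{x_i}$ is not contained in $K_i$; what you mean (and what works) is that $x\in K\cap W_{x_i}\subseteq K\cap\overline{W_{x_i}}=K_i$.
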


\begin{theorem}\label{thm:sequential_density_compact_open_topology}
    Let $\X$ and $(\Y, d_\Y)$ be metric spaces. For a subset $S\subseteq \CC(\X, \Y)$ the following statements are equivalent:
    \begin{itemize}
        \item[(i)] $S$ is sequentially dense in $\CC(\X, \Y)$ with respect to the compact-open topology.
        \item[(ii)] For every $G\in \CC(\X, \Y)$ there exists a sequence $(G_n)_{n\in \N}$ in $S$ such that for every compact $K\subseteq \X$ it holds that 
        \begin{align*}
            \sup_{x\in K} d_\Y\big(G(x)\, ,\, G_n(x) \big) \xrightarrow{n\to\infty} 0.
        \end{align*}
    \end{itemize}
    Note that the choice of the sequence $(G_n)_{n\in\N}$ is independent of $K$.
\end{theorem}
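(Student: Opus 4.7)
The plan is to derive Theorem \ref{thm:sequential_density_compact_open_topology} as an immediate corollary of Lemma \ref{lem:convergence_compact_open_topology} combined with the bare definition of sequential density, in complete analogy to how Theorem \ref{thm:density_compact_open_topology} follows from Lemma \ref{lem:adherent_point_compact_open_topology}. Since the real content has already been absorbed into the cited convergence characterization, the argument is essentially a one-step substitution of quantified statements, and both implications will be proved simultaneously.

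First, I would unfold the definition of sequential density applied to $S\subseteq \CC(\X, \Y)$ with the compact-open topology: $S$ is sequentially dense in $\CC(\X, \Y)$ if and only if for every $G\in \CC(\X, \Y)$ there exists a sequence $(G_n)_{n\in \N}$ in $S$ such that $G_n \to G$ with respect to the compact-open topology. The important structural point to stress in the writeup is the quantifier order: the sequence depends on $G$ alone, and no compact subset $K$ appears yet.

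Next, I would invoke Lemma \ref{lem:convergence_compact_open_topology} to replace the statement ``$G_n \to G$ in the compact-open topology'' by the equivalent characterization
\begin{align*}
    \forall K\subseteq \X \text{ compact}: \quad \sup_{x\in K} d_\Y\bigl(G(x)\, ,\, G_n(x) \bigr) \xrightarrow{n\to\infty} 0.
\end{align*}
Substituting this equivalence into the unfolded definition of sequential density directly yields statement (ii) of the theorem, and conversely any sequence satisfying (ii) converges to $G$ in the compact-open topology by the same lemma, hence witnesses sequential density. Both directions therefore reduce to one and the same rewriting.

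There is no real obstacle: the only care required is rhetorical, namely to emphasize that the universal quantifier over compact $K$ is inside the existential quantifier over the sequence $(G_n)$, so that (ii) differs from condition (ii) of Theorem \ref{thm:density_compact_open_topology} precisely in this quantifier order. This is the whole conceptual point distinguishing \ref{fig:approximation_types:(A)} from \ref{fig:approximation_types:(B)}, and it is preserved verbatim by the substitution step above.
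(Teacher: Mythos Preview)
Your proposal is correct and matches the paper's approach exactly: the paper states that Lemma \ref{lem:convergence_compact_open_topology} ``immediately allows for a characterization of sequentially dense sets in Theorem \ref{thm:sequential_density_compact_open_topology}'' and gives no further proof, which is precisely your one-step substitution of the convergence characterization into the definition of sequential density.
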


\subsection{Density versus sequential density in the compact-open topology}\label{sec:denseFU_FU_not_equivalent}
In the previous sections, we characterized the different types of operator approximation given in \Cref{fig:approximation_types} in the context of the compact-open topology.
According to \Cref{thm:density_compact_open_topology}, statement \labelcref{fig:approximation_types:(A)} in \Cref{fig:approximation_types} means that $S$ is dense with respect to the compact-open topology, whereas \labelcref{fig:approximation_types:(B)} is equivalent to sequential density of $S$ due to \Cref{thm:sequential_density_compact_open_topology}. 
The question whether it is an improvement to prove \labelcref{fig:approximation_types:(B)} and not only \labelcref{fig:approximation_types:(A)} hence corresponds to the question whether dense subsets of $\CC(\X, \Y)$ are always sequentially dense w.r.t. the compact-open topology.
In fact, this question is closely related to the Fréchet-Urysohn property. For the definition, we follow \cite[Chapter 1.6]{Engelking_Topologie}, in which the term Fréchet space is used. 
\begin{definition}[Fréchet-Urysohn space]
    A topological space $\Z$ is said to be a  Fréchet-Urysohn space if for every $S\subseteq \Z$ and adherent point $z\in \overline{S}$ there is a sequence in $S$ converging to $z$. In other words, the closure and sequential closure of every $S\subseteq \Z$ coincide.
\end{definition}
Clearly, if a space is a Fréchet-Urysohn space, then dense sets are particularly also sequentially dense. However, the converse may not be true in general, see for example \Cref{lem:example_dense_FU_not_FU}. On the other hand, for the compact-open topology on $\CC(\X, \Y)$, the converse holds for many choices of $\X$ and $\Y$, as revealed by the following theorem.

\begin{theorem}\label{thm:dense_Frechet_hemicompact}
    Let $\X$ be a metric space and $\Y$ be a normed space which is at least one-dimensional. Then the following statements are equivalent. 
    \begin{enumerate}[(i)]
        \item Every dense subset of $\CC(\X, \Y)$ is also sequentially dense (w.r.t. compact-open topology).\label{thm:dense_Frechet_hemicompact:i}
        \item $\X$ is hemicompact.\label{thm:dense_Frechet_hemicompact:ii}
        \item $\CC(\X, \Y)$ equipped with the compact-open topology is a Fréchet-Urysohn space.\label{thm:dense_Frechet_hemicompact:iii}
    \end{enumerate}
\end{theorem}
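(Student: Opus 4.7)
The plan is to prove the three-way equivalence cyclically: $(ii) \Rightarrow (iii) \Rightarrow (i) \Rightarrow (ii)$.

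For $(ii) \Rightarrow (iii)$, I would establish that hemicompactness of $\X$ makes $\CC(\X,\Y)$ metrizable in the compact-open topology: given an exhaustion $(K_n)_{n \in \N}$ witnessing hemicompactness, the metric
\begin{align*}
d(G,H) = \sum_{n \in \N} 2^{-n}\,\min\{1, \sup\nolimits_{x \in K_n} \|G(x)-H(x)\|_\Y\}
\end{align*}
induces precisely the compact-open topology (because every compact $K\subseteq\X$ lies in some $K_n$, so the compact-open seminorm for $K$ is dominated by that for $K_n$). Metrizable topologies are first countable and hence Fréchet-Urysohn. The implication $(iii) \Rightarrow (i)$ is then immediate from the definition of Fréchet-Urysohn: closure and sequential closure agree, so in particular any dense set is sequentially dense.

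For the main direction $(i) \Rightarrow (ii)$ I would argue by contraposition. Fix $y_0 \in \Y$ with $\|y_0\|_\Y=1$ (available since $\dim \Y \geq 1$). Assuming $\X$ is not hemicompact, the poset $(\mathcal{K}(\X),\subseteq)$ of compact subsets has cofinality at least $\aleph_1$, so I would build by transfinite recursion of length $\omega_1$ a family $(K_\alpha, x_\alpha)_{\alpha<\omega_1}$ satisfying: (a) the compacts $K_\alpha$ are cofinal in $\mathcal{K}(\X)$; (b) $x_\alpha \in \X \setminus K_\alpha$; and crucially (c) $\{x_\beta : \beta < \gamma\} \subseteq K_\gamma$ for every countable ordinal $\gamma < \omega_1$. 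Metric normality (Urysohn's lemma for the disjoint closed sets $K_\alpha$ and $\{x_\alpha\}$) then provides bump functions $\phi_\alpha \in \CC(\X,[0,1])$ with $\phi_\alpha|_{K_\alpha}=0$ and $\phi_\alpha(x_\alpha)=1$. The candidate dense-but-not-sequentially-dense set is
\begin{align*}
S := \bigl\{G \in \CC(\X,\Y) : \exists\, \alpha<\omega_1 \text{ with } \|G(x_\alpha)\|_\Y \geq 1 + \sup\nolimits_{x \in K_\alpha}\|G(x)\|_\Y\bigr\}.
\end{align*}
Density follows by fixing $G_0\in\CC(\X,\Y)$, compact $L\subseteq\X$, and $\epsilon>0$, picking $\alpha$ with $K_\alpha \supseteq L$ using (a), and setting $H = G_0 + M\phi_\alpha y_0$ for $M > \sup_{K_\alpha}\|G_0\| + \|G_0(x_\alpha)\| + 1$; then $H \in S$ while $H \equiv G_0$ on $K_\alpha$ and hence on $L$. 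For non-sequential-density at $0$, I would suppose $(G_n)_n \subseteq S$ with $G_n\to 0$ in the compact-open topology, extract witnessing indices $\alpha_n<\omega_1$ so that $\|G_n(x_{\alpha_n})\|_\Y \geq 1$, set $\alpha^* := \sup_n \alpha_n$, which satisfies $\alpha^*<\omega_1$ because $\omega_1$ has uncountable cofinality, and then invoke (c) to obtain $\{x_{\alpha_n}\}_n \subseteq K_{\alpha^*}$; this yields $\sup_{x\in K_{\alpha^*}}\|G_n(x)\|_\Y \geq 1$ for every $n$, contradicting uniform convergence of $G_n$ to $0$ on the compact $K_{\alpha^*}$.

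The main obstacle, and where I expect the most care is needed, is executing the transfinite recursion so that (a)–(c) all hold simultaneously—especially at limit ordinals, because countable unions of compacts in a general metric space need not even be relatively compact. I would address this by drawing each new point $x_\alpha$ not arbitrarily from $\X\setminus K_\alpha$ but from carefully pre-selected relatively compact reservoirs, and at each limit stage $\gamma$ choosing $K_\gamma$ as a compact containing the closure of the (countable) set of previously drawn points, enlarged by a fresh "non-hemicompactness witness" to preserve cofinality. This is the adaptation of Osipov's scalar-valued argument referenced in the introduction to the normed vector-valued setting, and once the recursion is in place, the verifications of density and of the sequential obstruction are essentially those sketched above.
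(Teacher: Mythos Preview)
Your argument for $(ii)\Rightarrow(iii)$ via metrizability is fine (the paper also notes this route, due to Arens), and $(iii)\Rightarrow(i)$ is trivial. The gap is in $(i)\Rightarrow(ii)$. You correctly observe that non-hemicompactness forces the cofinality of $(\mathcal{K}(\X),\subseteq)$ to be at least $\aleph_1$, but your construction tacitly assumes it is \emph{exactly} $\aleph_1$: condition~(a) asks an $\omega_1$-indexed family to be cofinal in $\mathcal{K}(\X)$. This can fail outright. For a discrete metric space of cardinality $\aleph_2$ the compact sets are the finite sets, and their cofinality is $\aleph_2$; even for separable $\X$ such as Baire space $\omega^\omega$, the cofinality of $\mathcal{K}(\X)$ is the dominating number $\mathfrak{d}$, which is consistently $>\aleph_1$. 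Without~(a) your density proof for $S$ collapses, and lengthening the recursion to the true cofinality $\kappa$ destroys the step ``$\alpha^*=\sup_n\alpha_n<\omega_1$'', since countable subsets of $\kappa$ are no longer automatically bounded below a stage where~(c) can be arranged. A second issue is that~(c) itself is problematic at limit stages: you need the countable set $\{x_\beta:\beta<\gamma\}$ to lie in a compact, but arbitrary countable subsets of a metric space need not be relatively compact, and your ``relatively compact reservoirs'' idea is in direct tension with the requirement $x_\alpha\notin K_\alpha$ for cofinally large $K_\alpha$.

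The paper avoids all cardinality issues by a different mechanism: it shows that~(i) forces every open $k$-cover of $\X$ (a family of proper open sets such that each compact lies in some member) to contain a $k$-\emph{sequence}, by exhibiting a concrete dense set built from the cover and applying sequential density at the zero function. From this $k$-cover property it then deduces local compactness and hemicompactness via a diagonal selection argument over a sequence of $k$-covers. No transfinite recursion or cofinality computation is needed.
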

For $\Y=\R$ and separable, metrizable $\X$, this result has been derived in \cite[Corollary 3.7]{Osipov2018}, which easily extends to any finite dimensional $\Y$. In \Cref{thm:dense_Frechet_hemicompact}, we also treat infinite-dimensional $\Y$. Further, for the case that $\X$ is a separable metric space and for certain metrizable spaces $\Y,$ equivalence between \labelcref{thm:dense_Frechet_hemicompact:ii} and \labelcref{thm:dense_Frechet_hemicompact:iii} has been shown in \cite[Theorem 3.4, Remark 3.7]{Gabriyelyan2020}.

In the remainder of this section, we will prove the implication "\labelcref{thm:dense_Frechet_hemicompact:i} $\to$ \labelcref{thm:dense_Frechet_hemicompact:ii}" with \Cref{lem:denseFU_k_sequence,lem:select_k_sequence,lem:ksequence_hemicompact}. Implication "\labelcref{thm:dense_Frechet_hemicompact:ii} $\to$ \labelcref{thm:dense_Frechet_hemicompact:iii}" is handled in \Cref{lem:hemicompact_FU}. For the proofs we follow the ideas presented in \cite{Caserta2006} and \cite{Osipov2018}. Note that "\labelcref{thm:dense_Frechet_hemicompact:iii} $\to$ \labelcref{thm:dense_Frechet_hemicompact:i}" is evident. To start, let us recall the definitions of hemicompact and locally compact spaces, for which we follow \cite[Definition 3.3 in Chapter 11]{Morita_Topologie} and \cite[Section 3.3]{Engelking_Topologie}, respectively.
\begin{definition}[Hemicompact space]
    A topological space $\X$ is called hemicompact if there is a sequence of compact subsets $(K_n)_{n\in \N}$ of $\X$ such that each compact $K\subseteq \X$ is contained in some $K_n$.
\end{definition}

\begin{definition}[Locally compact space]
    A topological space $\X$ is called locally compact if each $x\in \X$ has a compact neighborhood, i.e., there is a compact set $K\subseteq \X$ and some open $U\subseteq\X$ such that $x\in U\subseteq K$.
\end{definition}
Note that every hemicompact metric space $\X$ must be separable, as it can be written as a countable union of compact metric spaces, where the latter are separable (see \Cref{lem:compact_metric_separable}). For non-separable $\X$, \Cref{thm:dense_Frechet_hemicompact} hence implies that there is always a dense set $S\subset \CC(\X, \Y)$ which is not sequentially dense. This shows that \labelcref{fig:approximation_types:(A)} and \labelcref{fig:approximation_types:(B)} above are not equivalent in general. An explicit construction of such a set is provided in \Cref{thm:X_nonseparableMetricSpace_equivalence}, for which we are grateful to Hendrik Vogt from the University of Bremen for his valuable contribution to this example.
Further, if $\X$ is an infinite-dimensional normed space, it follows that it cannot be locally compact, as the closed balls $\overline{B}_r(x)$ for $x\in \X$ and $r>0$ are never compact. A combination of \Cref{thm:dense_Frechet_hemicompact} as well as the \Cref{lem:denseFU_k_sequence,lem:ksequence_hemicompact} leads to the fact that every hemicompact metric space is locally compact (alternatively, see also \cite[Section 8]{Arens1946}). Therefore, \Cref{thm:dense_Frechet_hemicompact} also reveals that for any infinite-dimensional normed space $\X$, there must be dense sets in $\CC(\X, \Y)$ which are not sequentially dense.

In order to prove \Cref{thm:dense_Frechet_hemicompact}, we need to settle down some more terminology and introduce the notion of a so-called open $k$-cover and a $k$-sequence, for which we follow \cite{Caserta2006} and \cite{Osipov2018}.

\begin{definition}[Open $k$-cover]
    Let $\X$ be a topological space. An open $k$-cover for $\X$ is a collection of open sets $\mathfrak{U} = \{U_i: i\in I\}$ such that $\X\notin \mathfrak{U}$ and for each compact $K\subseteq \X$ there exists some $i\in I$ such that $K\subseteq U_i$.
\end{definition}
Let us remark that a compact space $\X$ cannot have a $k$-cover, simply because $\X \neq U_i$ for every $U_i$ in the $k$-cover. Further, every $k$-cover must be an infinite set. Otherwise, one could choose $x_i\in \X\setminus U_i$ for all $i\in I$ (finitely many), so the compact set $\{x_i: i\in I\}$ would not be contained in any $U_i$. Further, the union over all elements of an open $k$-cover is the whole space (so a $k$-cover really is a cover of the space).

\begin{definition}[$k$-sequence]
    Let $\X$ be a topological space. A $k$-sequence for $\X$ is a sequence of subsets $\mathfrak{U} = \{U_n: n\in \N\}$ such that $\X\notin \mathfrak{U}$ and for each compact $K\subseteq \X$ there exists some $N\in \N$ such that $K\subseteq U_n$ for all $n\geq N$.
\end{definition}
Clearly, every $k$-sequence of open sets is also an open $k$-cover. We say that an open $k$-cover $\mathfrak{U} = \{U_i: i\in I\}$ contains a $k$-sequence if there is a sequence $(U_{i_n})_{n\in \N}$ in $\mathfrak{U}$ which is a $k$-sequence. The following Lemma draws a connection between \Cref{thm:dense_Frechet_hemicompact} \labelcref{thm:dense_Frechet_hemicompact:i} and the property that every open $k$-cover of $\X$ contains a $k$-sequence. Note that within the literature, the latter characteristic of the space $\X$ is also sometimes referred to as $\X$ being a $\gamma_k$-set \cite{Osipov2018} or $\X$ fulfilling the $\gamma_\mathfrak{I}\text{-property}$ for the ideal $\mathfrak{I}$ consisting of all compact subsets of $\X$ \cite{Gabriyelyan2020}. The idea for the proof is inspired by \cite[Theorem 3.3]{Osipov2018} in which the case $\Y=\R$ was considered.
\begin{lemma}\label{lem:denseFU_k_sequence}
    Let $\X$ be a metric space and $\Y$ be a normed space which contains some $y_1\neq 0$. Assume that every dense subset of $\CC(\X, \Y)$ is sequentially dense (w.r.t. the compact-open topology). Then every open $k$-cover of $\X$ contains a $k$-sequence.
\end{lemma}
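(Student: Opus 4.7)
The plan is to exploit the hypothesis via a carefully chosen dense subset of $\CC(\X, \Y)$ whose elements are naturally indexed by the members of $\mathfrak{U}$, so that the indices of any sequentially approximating sequence automatically yield a $k$-sequence. Concretely, for each $U \in \mathfrak{U}$ set
\[ S_U \coloneqq \{ f \in \CC(\X, \Y) : f \equiv 0 \text{ on } \X \setminus U \}, \qquad S \coloneqq \bigcup_{U \in \mathfrak{U}} S_U. \]
First I will verify that $S$ is dense in $\CC(\X, \Y)$ with respect to the compact-open topology. By Theorem \ref{thm:density_compact_open_topology} it suffices to approximate an arbitrary $G \in \CC(\X, \Y)$ uniformly on an arbitrary compact $K \subseteq \X$. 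Since $\mathfrak{U}$ is a $k$-cover there exists $U \in \mathfrak{U}$ with $K \subseteq U$, and since $\X$ is a metric space the standard cutoff
\[ \phi(x) \coloneqq \frac{\dist(x, \X \setminus U)}{\dist(x, K) + \dist(x, \X \setminus U)} \]
is well-defined and continuous on $\X$ (the denominator vanishes nowhere because $K$ and $\X \setminus U$ are disjoint closed sets), with $\phi \equiv 1$ on $K$ and $\phi \equiv 0$ on $\X \setminus U$. Then $f \coloneqq \phi \cdot G$ lies in $S_U \subseteq S$ and agrees with $G$ exactly on $K$, which confirms density.

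By the hypothesis, $S$ is then sequentially dense. Applying this to the constant map $G_\ast \in \CC(\X, \Y)$ defined by $G_\ast(x) \coloneqq y_1$ yields a sequence $(G_n)_{n\in\N}$ in $S$ with $G_n \to G_\ast$ in the compact-open topology. For each $n$ pick $U_n \in \mathfrak{U}$ with $G_n \in S_{U_n}$; then $G_n$ vanishes on $\X \setminus U_n$, and $U_n \neq \X$ because $\X \notin \mathfrak{U}$.

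It remains to show that $(U_n)_{n\in\N}$ is a $k$-sequence. Given any compact $K \subseteq \X$, Lemma \ref{lem:convergence_compact_open_topology} gives $G_n \to y_1$ uniformly on $K$. Since $\Vert y_1 \Vert > 0$, there is $N \in \N$ such that $\Vert G_n(x) - y_1 \Vert < \Vert y_1 \Vert / 2$ for all $n \geq N$ and $x \in K$, so $G_n(x) \neq 0$ on $K$, which forces $K \subseteq U_n$ for every $n \geq N$. Hence $(U_n)_{n\in\N}$ is a $k$-sequence inside $\mathfrak{U}$.

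The main conceptual hurdle is selecting the right indexed dense family $S$ and a target function $G_\ast \equiv y_1$ so that compact-open convergence alone forces the $k$-sequence property; once $S$ is identified the density step reduces to a routine metric-space Urysohn cutoff, and the extraction of a $k$-sequence is immediate from uniform convergence to a nonzero constant.
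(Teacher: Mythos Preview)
Your proof is correct and follows essentially the same strategy as the paper's: define a dense family indexed by the $k$-cover, use sequential density to approximate a chosen target, and read off the $k$-sequence from the indices. The only cosmetic difference is that the paper takes $D=\{f: f\equiv y_1 \text{ on }\X\setminus U_i\}$ and approximates the zero function, while you take $S=\{f: f\equiv 0 \text{ on }\X\setminus U\}$ and approximate the constant $y_1$; the two are interchangeable.
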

\begin{proof}
    Let $\mathfrak{U}=\{U_i: i\in I\}$ be some open $k$-cover of $\X$. We first show that    \begin{align*}
        D \coloneqq \{f\in \CC(\X, \Y): f = y_1 \textup{ on } \X \setminus U_i \textup{ for some } i\in I\}
    \end{align*}
    is dense in $\CC(\X, \Y)$ with respect to the compact-open topology. For that, let $K\subseteq \X$ be compact. Since $\mathfrak{U}$ is an open $k$-cover of $\X$, there exists some $i\in I$ such that $K\subseteq U_i$. According to Urysohn's lemma, see for example \cite[Chapter 4]{Dugundji_Topologie}, there exists some continuous mapping $p:\X \to [0,1]$ which satisfies
    \begin{align*}
        p(x) &= 0 \textup{ for } x\in K;\\
        p(x) &= 1 \textup{ for } x\in \X \setminus U_i.
    \end{align*}
    Therefore, for any $f\in \CC(\X, \Y)$, the mapping $\tilde{f} = (f - y_1) (1- p) + y_1$ is an element of $D$ and coincides with $f$ on the compact set $K$.  Hence, density of $D$ follows from \Cref{thm:density_compact_open_topology}.
    By assumption, $D$ must also be sequentially dense. In particular, there is a sequence $(f_n)_{n\in\N}$ in $D$ which converges to the zero function $f_0\in \CC(\X, \Y)$ uniformly on every compact set, see \Cref{thm:sequential_density_compact_open_topology}. For each $n\in \N$ there is some $i(n)\in I$ such that $f_n = y_1$ on $\X\setminus U_{i(n)}$. We claim that $\mathfrak{U}'\coloneqq \{U_{i(n)}:n\in \N\}$ is the desired $k$-sequence for $\X$ which is contained in $\mathfrak{U}$. For that, let $K\subseteq \X$ be compact. If $K$ is not a subset of $U_{i(n)}$ for some $n\in \N$, it follows that 
    \begin{align*}
        \sup_{x\in K} \Vert f_0(x) - f_n(x) \Vert = \sup_{x\in K} \Vert f_n(x) \Vert \geq \Vert y_1 \Vert.
    \end{align*}
    Since $f_n$ uniformly converges to $f_0$ on $K$, there must be an $N\in \N$ such that for all $n\geq N$ it holds that $K\subseteq U_{i(n)}$. Hence, $\mathfrak{U}'$ is a $k$-sequence.
\end{proof}

The next lemma is similar to \cite[Theorem 18]{Caserta2006}. However, we prove a weaker statement which will simplify the proof, but  still be sufficient for our purpose of proving implication "\labelcref{thm:dense_Frechet_hemicompact:i}$\rightarrow$\labelcref{thm:dense_Frechet_hemicompact:ii}" in \Cref{thm:dense_Frechet_hemicompact}.
\begin{lemma}\label{lem:select_k_sequence}
    Let $\X$ be a metric space, in which every open $k$-cover contains a $k$-sequence. Consider any sequence $(\mathfrak{U}_n)_{n\in \N}$ of open $k$-covers. Then there is some subsequence $(\mathfrak{U}_{n_m})_{m\in \N}$ such that for each $m\in \N$ there is some $U_m\in \mathfrak{U}_{n_m}$ such that $(U_m)_{m\in \N}$ is a $k$-sequence.
\end{lemma}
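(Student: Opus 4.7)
The plan is to compress the sequence $(\mathfrak{U}_n)_{n \in \N}$ of open $k$-covers into a single combined open $k$-cover, invoke the hypothesis on it just once, and then read off the required subsequence and selections from the components of the resulting $k$-sequence.

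Concretely, I would define
\[
\mathfrak{V} := \left\{ V_1 \cap V_2 \cap \cdots \cap V_k : k \in \N,\, V_i \in \mathfrak{U}_i \right\}
\]
and verify that $\mathfrak{V}$ is an open $k$-cover of $\X$: each element is open, none equals $\X$ because the factor $V_1 \in \mathfrak{U}_1$ is already a proper open subset, and for any compact $K \subseteq \X$ the $k$-cover property of $\mathfrak{U}_1$ furnishes a single-factor element $V_1 \in \mathfrak{U}_1 \subseteq \mathfrak{V}$ with $K \subseteq V_1$.

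By the hypothesis, $\mathfrak{V}$ then contains a $k$-sequence $(W_j)_{j \in \N}$, and for each $j$ I fix a representation $W_j = V^j_1 \cap \cdots \cap V^j_{k_j}$ with $V^j_i \in \mathfrak{U}_i$. Because $W_j \subseteq V^j_{k_j} \in \mathfrak{U}_{k_j}$ and supersets (by proper open subsets) of a $k$-sequence are again a $k$-sequence, extracting a subsequence $(j_m)_m$ along which $k_{j_m}$ is strictly increasing and then setting $n_m := k_{j_m}$ together with $U_m := V^{j_m}_{k_{j_m}} \in \mathfrak{U}_{n_m}$ would immediately conclude the proof.

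The main obstacle is therefore to force some subsequence of the levels $(k_j)$ to be strictly increasing, equivalently to handle the case in which $(k_j)$ is bounded. I would address this by iteration on tail covers: if $k_j \leq K$ for all $j$, a pigeonhole argument produces a constant-level subsequence of $W_j$'s living in $\mathfrak{V}_{k^*}$ for a fixed $k^* \leq K$, at which point the entire construction is reapplied to the tail sequence $(\mathfrak{U}_n)_{n > K}$, which is itself a sequence of open $k$-covers satisfying the hypothesis. Repeated iteration either eventually lands in an unbounded-level case and delivers the subsequence directly, or else produces an infinite strictly increasing sequence of indices $m^*_1 < m^*_2 < \cdots$ each equipped with a $k$-sequence $(A^{(i)}_l)_l \subseteq \mathfrak{U}_{m^*_i}$; in that scenario a final application of the hypothesis to the diagonal open $k$-cover $\bigcup_i \{A^{(i)}_l : l \in \N\}$, together with a pigeonhole on the iteration-index coordinate of the resulting $k$-sequence, yields the desired strictly increasing subsequence and selections.
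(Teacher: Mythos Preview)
Your intersection construction and the projection to the last factor work fine when the level sequence $(k_j)$ is unbounded. The gap is in your treatment of the bounded case, and specifically in the final ``diagonal'' step.

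Suppose the iteration never lands in an unbounded-level case, so you end up with indices $m^*_1 < m^*_2 < \cdots$ and a $k$-sequence $(A^{(i)}_\ell)_\ell \subseteq \mathfrak{U}_{m^*_i}$ for each $i$. You then apply the hypothesis to the union $\bigcup_i \{A^{(i)}_\ell : \ell\in\N\}$ and extract a $k$-sequence $(B_p)_p$ with $B_p = A^{(i_p)}_{\ell_p}$. But nothing forces $(i_p)$ to be unbounded: if, say, $\mathfrak{U}_{m^*_1}$ already contains every proper open subset of $\X$, the extracted $k$-sequence may live entirely at level $i_p \equiv 1$. In that case pigeonhole only hands you back a $k$-sequence inside a \emph{single} $\mathfrak{U}_{m^*_{i^*}}$---information you already had---and you are exactly where you started. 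The argument is circular: ``each $\mathfrak{U}_n$ contains a $k$-sequence'' is strictly weaker than ``one can select $U_n\in\mathfrak{U}_n$ forming a $k$-sequence,'' and your final step presupposes the latter.

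The paper avoids this by a tagging device you are missing: since $\X$ has an open $k$-cover it is not compact, so there exists a sequence $(x_n)$ with no cluster point. Replacing $\mathfrak{U}_n$ by $\mathfrak{V}_n=\{U\setminus\{x_n\}:U\in\mathfrak{U}_n\}$ and taking $\mathfrak{V}=\bigcup_n\mathfrak{V}_n$ gives an open $k$-cover in which any extracted $k$-sequence \emph{must} have unbounded level, because a bounded set of levels $\{n_m\}$ would make $\{x_{n_m}\}$ a compact set excluded from every term. This single extra idea---removing a tagged point from each cover to force unboundedness---is precisely what your intersection approach lacks.
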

\begin{proof}
     Let $(\mathfrak{U}_n)_{n\in \N}$ be a sequence of open $k$-covers. In particular, $\X$ cannot be compact, so there exists a sequence $(x_n)_{n\in \N}$ which has no cluster point. For $n\in \N$ define
     \begin{align*}
         \mathfrak{V}_n &\coloneqq \{ U \setminus \{x_n\}: U \in \mathfrak{U}_n \}.
     \end{align*}
      We first show that $\mathfrak{V} = \cup_{n\in \N} \mathfrak{V}_n$ is an open $k$-cover. For that, let $K\subseteq \X$ be compact. Since $(x_n)_{n\in \N}$ has no cluster point, there must be some $x_\ell \notin K$. As $\mathfrak{U}_\ell$ is a $k$-cover, there exists some $U\in \mathfrak{U}_\ell$ such that $K\cup\{x_\ell\}\subseteq U$. Therefore, $K\subseteq U\setminus\{x_\ell\}\in \mathfrak{V}$, which shows that $\mathfrak{V}$ is an open $k$-cover. By assumption on $\X$, we can choose a $k$-sequence $(U_m\setminus\{x_{n_m} \})_{m\in \N}$ in $\mathfrak{V}$, where $U_m\in \mathfrak{U}_{n_m}$. We observe that the set of indices $\{n_m: m\in \N\}\subseteq \N$ cannot be bounded, because otherwise $\{x_{n_m}:m\in \N\}$ would be a compact set which is not contained in any of the sets of the $k$-sequence $(U_m\setminus\{x_{n_m} \})_{m\in \N}$, which would be a contradiction. Therefore, there is a subsequence $(n_{m_j})_{j\in \N}$ which is strictly monotonically increasing and going to infinity. Since $(m_j)_{j\in \N}$ is also strictly monotonically increasing, and since every subsequence of a $k$-sequence is still a $k$-sequence, $(U_{m_j})_{j\in \N}$ is the desired $k$-sequence.
\end{proof}

Finally, we are able to show hemicompactness of $\X$ in the next lemma, where the idea of the proof is inspired by \cite[Proposition 5]{Caserta2006}.
\begin{lemma}\label{lem:ksequence_hemicompact}
    Let $(\X, d)$ be a metric space in which every open $k$-cover contains a $k$-sequence. Then $\X$ is locally compact and hemicompact.
\end{lemma}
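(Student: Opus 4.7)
The plan is to prove local compactness first by contradiction, using the $k$-sequence hypothesis applied to a carefully designed open $k$-cover, and then to derive hemicompactness from local compactness together with a Lindelöf-type argument.

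\emph{Local compactness.} I would assume for contradiction that some $x_0 \in \X$ has no compact neighborhood, so that $\overline{B}_{1/n}(x_0)$ is non-compact for every $n \in \N$. In a metric space, non-compactness of a closed set is equivalent to failure of sequential compactness, hence for each $n$ I extract (from a sequence without convergent subsequence) an infinite closed discrete subset $Y_n = \{y^{(n)}_k : k \in \N\} \subseteq \overline{B}_{1/n}(x_0) \setminus \{x_0\}$. Writing $F_{n,m} := \{y^{(n)}_k : k \geq m\}$, which is closed as a subset of the closed discrete $Y_n$, I form the collection $\mathfrak{U} := \{\X \setminus F_{n,m} : n,m \in \N\}$ of open sets. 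This is an open $k$-cover, since for any compact $L \subseteq \X$ the intersection $L \cap Y_n$ is compact and discrete, hence finite, so $L \cap F_{n,m} = \emptyset$ for every sufficiently large $m$. The hypothesis then delivers a $k$-sequence $(\X \setminus F_{n_i, m_i})_{i \in \N}$, and the targeted contradiction is to select witnesses $z_i \in F_{n_i, m_i}$ such that $L_\ast := \{z_i : i \in \N\} \cup \{x_0\}$ is compact: since each $z_i \notin \X \setminus F_{n_i, m_i}$, this $L_\ast$ is never eventually absorbed by the $k$-sequence, contradicting its defining property.

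\emph{Hemicompactness.} I would first establish that $\X$ is Lindelöf: for any open cover $\mathcal{V}$ of $\X$, the family of finite unions $\{\bigcup \mathcal{V}_0 : \mathcal{V}_0 \subseteq \mathcal{V} \textup{ finite}, \bigcup \mathcal{V}_0 \neq \X\}$ is an open $k$-cover (unless $\X$ is already compact, a trivial case), and its $k$-sequence identifies a countable subfamily of $\mathcal{V}$ covering $\X$. Being Lindelöf metric, $\X$ is second countable; combined with local compactness from the first step, this yields a compact exhaustion $\X = \bigcup_n K_n$ with $K_n \subseteq \textup{int}(K_{n+1})$, obtained by covering $\X$ with countably many open sets of compact closure and taking cumulative unions. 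Every compact $L \subseteq \X$ is then absorbed by some $K_N$ by compactness of $L$, giving hemicompactness.

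The main obstacle is the diagonal selection in the local-compactness step: ensuring the witnesses $z_i$ can be chosen so that $z_i \to x_0$, which requires $n_i \to \infty$ (since $d(z_i, x_0) \leq 1/n_i$). Probing the extracted $k$-sequence against singleton compacts $\{y^{(n)}_k\}$ forces $m_i \to \infty$ along any subsequence with $n_i$ constant, but does not directly prevent $(n_i)$ from staying bounded. The remedy is to enrich $\mathfrak{U}$ with further open sets that eliminate $k$-sequences with bounded $n_i$—for instance, complements of compact sets assembled diagonally from the families $(Y_n)_{n \in \N}$—so that every admissible $k$-sequence necessarily has $n_i \to \infty$. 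Implementing this enrichment cleanly, in the spirit of \cite{Caserta2006}, is the technical heart of the argument.
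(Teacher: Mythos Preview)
Your local-compactness argument has a genuine gap that you correctly identify but do not close. The single $k$-cover $\mathfrak{U} = \{\X \setminus F_{n,m} : n,m \in \N\}$ admits $k$-sequences with bounded first index: for instance $(\X \setminus F_{1,m})_{m \in \N}$ is already a $k$-sequence, since every compact set meets the closed discrete $Y_1$ in only finitely many points, and against such a $k$-sequence your diagonal selection produces no contradiction. Your proposed remedy of ``enriching $\mathfrak{U}$'' with complements of diagonally-assembled compact sets is too vague to evaluate, and it is not clear that any enrichment of a \emph{single} $k$-cover forces $n_i \to \infty$.

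The paper's approach is structurally different and sidesteps this obstacle. Rather than one $k$-cover, it builds a \emph{sequence} of $k$-covers $\mathfrak{U}_n$, one per scale $1/n$: for each compact $K$ it selects a point $x_n(K) \in B_{1/n}(x_0) \setminus (K \cup \{x_0\})$ and lets $U_n(K)$ be a small open neighborhood of $K \cup \{x_0\}$ that omits $x_n(K)$. The decisive step is an auxiliary selection lemma (Lemma~\ref{lem:select_k_sequence}), proved separately from the same hypothesis on $\X$: from any sequence of open $k$-covers one may pass to a subsequence $(\mathfrak{U}_{n_m})_m$ with $n_m \to \infty$ and pick one member $A_m \in \mathfrak{U}_{n_m}$ from each so that $(A_m)_m$ is a $k$-sequence. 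This lemma is precisely what guarantees the unbounded indices you need; the compact set $\{x_0\} \cup \{x_{n_m}(K_m) : m \in \N\}$ then delivers the contradiction. In short, the paper replaces your unspecified enrichment by a clean selection principle across a \emph{family} of $k$-covers.

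Your hemicompactness argument via Lindel\"of and second countability is correct but roundabout. The paper is more direct: once local compactness is in hand, each $x$ has $r(x) > 0$ with $\overline{B}_{r(x)}(x)$ compact; the finite unions $\bigcup_{i=1}^n B_{r(x_i)}(x_i)$ form an open $k$-cover, and any $k$-sequence extracted from it consists of relatively compact sets, giving hemicompactness immediately.
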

\begin{proof}
    Assume $\X$ was not locally compact. Let $x\in\X$ be a point which has no compact neighborhood. Therefore, for each $n\in \N$ and compact $K\subset \X$ it must be
    \begin{align*}
        B_{\frac{1}{n}}(x) \setminus (K\cup \{x\}) \neq \emptyset, 
    \end{align*}
    since otherwise $K\cup \{x\}$ would be a compact neighborhood of $x$. 
    Therefore, choose $x_n(K)\in B_{\frac{1}{n}}(x) \setminus (K\cup \{x\})$ and define
    \begin{align*}
        \varepsilon_n(K) \coloneqq 0.5\cdot  \textup{dist}(x_n(K), K\cup\{x\})>0.
    \end{align*}
    Define an open neighborhood of $K\cup\{x\}$ via
    \begin{align*}
        U_n(K) \coloneqq \bigcup_{z\in K \cup \{x\}} B_{\varepsilon_n(K)}(z),
    \end{align*}
    which does not contain $x_n(K)$.
    Then for every $n\in \N$
    \begin{align*}
        \mathfrak{U}_n \coloneqq \{ U_n(K): K\subset \X \textup{ compact} \}
    \end{align*}
    is an open $k$-cover, since for any compact subset $K\subset \X$ it holds that $K\subseteq K\cup \{x\} \subset U_n(K)$. Due to \Cref{lem:select_k_sequence}, there is a subsequence $(\mathfrak{U}_{n_m})_{m\in \N}$ and $A_m\in \mathfrak{U}_{n_m}$ such that $(A_m)_{m\in \N}$ is a k-sequence. We can write $A_m = U_{n_m}(K_m)$ for some compact $K_m\subset \X$.
    Consider the set $$C\coloneqq\{x\} \cup \{x_{n_m}(K_m): m\in \N\}.$$
    As $(n_m)_{m\in\N}\subseteq \N$ is strictly increasing, we have that $x_{n_m}(K_m)\xrightarrow[]{m\to \infty}x$ since $d(x_{n_m}(K_m), x) \leq \nicefrac{1}{n_m}.$ Hence, $C$ is compact as it consists of the elements of the convergent sequence $(x_{n_m}(K_m))_{m\in \N}$ including its limit $x$. For more details on the proof of the compactness of $C$, see \Cref{lem:compact_set_convergent_sequence}.
    However, since $U_{n_m}(K_m)$ does not contain $x_{n_m}(K_m)$, the compact set $C$ is not contained in any $A_m=U_{n_m}(K_m)$, which is a contradiction to $(A_m)_{m\in \N}$ being a $k$-sequence.
    Thus, $\X$ must be locally compact. 
    
    Therefore, every $x\in \X$ has a compact neighborhood, which implies that there exists some $r(x)>0$ such that $\overline{B}_{r(x)}(x)$ is compact. We observe that
    \begin{align*}
       \mathfrak{D}\coloneqq \left\{ \bigcup_{i=1}^n B_{r(x_i)}(x_i): x_i\in \X, n\in \N\right\}
    \end{align*}
    is an open $k$-cover of $\X$, since for every compact $K\subseteq \X$ it is
    \begin{align*}
        K\subseteq \bigcup_{x\in K} B_{r(x)}(x)\subseteq \bigcup_{i=1}^n B_{r(x_i)}(x_i)\in \mathfrak{D},
    \end{align*}
    for finitely many $x_i\in \X$. By assumption, the $k$-cover $\mathfrak{D}$ must contain a $k$-sequence $(U_n)_{n\in \N}$. Since the closure of $U_n$ is compact for all $n\in \N$, it follows that $\X$ is hemicompact.
\end{proof}

Together with \Cref{lem:denseFU_k_sequence}, we have thus shown implication "\labelcref{thm:dense_Frechet_hemicompact:i}$\rightarrow$\labelcref{thm:dense_Frechet_hemicompact:ii}" in \Cref{thm:dense_Frechet_hemicompact}. Namely, we have shown that a metric space $\X$ must be hemicompact if every dense subset of $\CC(\X, \Y)$ is sequentially dense. For completing the proof of \Cref{thm:dense_Frechet_hemicompact}, it only remains to show the following.
\begin{lemma}\label{lem:hemicompact_FU}
    Let $\X$ be a hemicompact metric space and $(\Y, d_\Y)$ any metric space. Then $\CC(\X, \Y)$ equipped with the compact-open topology is a Fréchet-Urysohn space.
\end{lemma}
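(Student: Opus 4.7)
The plan is to exploit the hemicompactness of $\X$ to reduce uniform convergence on arbitrary compact sets to uniform convergence on a single increasing exhausting sequence, and then to produce the required sequence by a diagonal selection. First, I would use hemicompactness of $\X$ to obtain compact sets $(K_n)_{n\in\N}$ such that every compact $K\subseteq \X$ is contained in some $K_n$. Replacing $K_n$ with $\bigcup_{j=1}^n K_j$ (still compact, and still absorbing every compact subset into some member), I may assume without loss of generality that $K_1\subseteq K_2\subseteq K_3\subseteq \cdots$.

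Next, I would fix any $S\subseteq \CC(\X, \Y)$ and any adherent point $G\in\CC(\X, \Y)$ of $S$ with respect to the compact-open topology; to conclude that the space is Fréchet-Urysohn it suffices to produce a sequence in $S$ converging to $G$ in this topology. By Lemma~\ref{lem:adherent_point_compact_open_topology} applied to each $K_n$, there exists a sequence $(G_k^{(n)})_{k\in\N}$ in $S$ with $\sup_{x\in K_n} d_\Y(G(x), G_k^{(n)}(x))\xrightarrow{k\to\infty} 0$. I would then pick $k(n)$ large enough that this supremum falls below $1/n$, and set $G_n \coloneqq G_{k(n)}^{(n)}\in S$.

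For any compact $K\subseteq \X$, hemicompactness gives $K\subseteq K_m$ for some $m$, and the nesting yields $K\subseteq K_n$ for every $n\geq m$. Hence $\sup_{x\in K} d_\Y(G(x), G_n(x)) \leq \sup_{x\in K_n} d_\Y(G(x), G_n(x)) < 1/n$ whenever $n\geq m$, which tends to $0$. Lemma~\ref{lem:convergence_compact_open_topology} then yields $G_n\to G$ in the compact-open topology, placing $G$ in the sequential closure of $S$ and completing the Fréchet-Urysohn conclusion.

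I do not foresee a real obstacle: the argument is a classical diagonal trick. The only point requiring any care is arranging the exhausting sequence to be nested, so that a $1/n$-control on $K_n$ automatically propagates to every earlier $K_m$; once this is in place, Lemmas~\ref{lem:adherent_point_compact_open_topology} and~\ref{lem:convergence_compact_open_topology} close the proof.
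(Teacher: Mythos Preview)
Your proposal is correct and essentially identical to the paper's proof: both make the hemicompact exhausting sequence nested by taking finite unions, invoke Lemma~\ref{lem:adherent_point_compact_open_topology} on each $K_n$ to get approximants accurate to $1/n$, and then use Lemma~\ref{lem:convergence_compact_open_topology} to conclude via a diagonal selection.
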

\begin{proof}
    Let $S\subseteq \CC(\X, \Y)$ and $f\in \overline{S}$ be an adherent point.
    By hemicompactness of $\X$, there are compact sets $(K_n)_{n\in \N}$ such that every compact $K\subseteq \X$ is contained in some $K_n$. Define
    \begin{align*}
        K'_n \coloneqq \bigcup_{i=1}^n K_i.
    \end{align*}
    Then for every compact $K$ there exists some $N\in \N$ such that $K\subseteq K'_n$ for all $n\geq N$.
    According to \Cref{lem:adherent_point_compact_open_topology}, for all $n\in \N$ there is some sequence $(f_{m,n})_{m\in \N}$ in $S\subseteq \CC(\X, \Y)$ such that 
    \begin{align*}
        \sup_{x\in K'_n} d_\Y(f_{m,n}(x)\, ,\, f(x)) \leq \frac{1}{m}.
    \end{align*}
    Hence, for any compact $K\subseteq \X$ there exists an $M\in \N$ such that for all $m\geq M$ it is $K\subseteq K'_m$ and it holds that
    \begin{align*}
        \sup_{x\in K} d_\Y(f_{m,m}(x)\, ,\, f(x)) \leq \sup_{x\in K'_m} d_\Y(f_{m,m}(x)\, ,\, f(x)) \leq \frac{1}{m}.
    \end{align*}
    Therefore, the diagonal sequence $(f_{m,m})_{m\in \N}$ converges uniformly to $f$ on every compact $K\subseteq \X$, which means that convergence is with respect to the compact-open topology according to \Cref{lem:convergence_compact_open_topology}. Thus, $\CC(\X, \Y)$ is a Fréchet-Urysohn space.
\end{proof}
It is to be mentioned that the previous result also follows from \cite[Theorem 7]{Arens1946}, in which it has been shown that $\CC(\X, \Y)$, equipped with the compact-open topology, is metrizable if $\X$ is hemicompact. Note that every metrizable space is a Fréchet-Urysohn space. Moreover, in \cite[Theorem 3.4]{Morita_Topologie} it was shown that for $\X$ being any topological space, $\CC(\X,\R)$ is metrizable if and only if $\X$ is hemicompact.

\section{Approximation properties} \label{sec:approximation_properties}
During this section, we present sufficient properties for metric spaces $\X$ and $\Y$, so that every continuous operator $G\in \CC(\X, \Y)$ can be approximated by the specific encoder-decoder architectures constructed in Section \ref{sec:approx_theorem}. Let us start by defining a general encoder-decoder architecture. 
\begin{definition}[Encoder-decoder architecture]\label{def:encoderDecoder_Architecture}
    Let $\X$ and $\Y$ be metric spaces. Further, for $n,m\in \N$ consider mappings $E:\X\to \K^n$, $D:\K^m\to \Y$ and $\varphi:\K^n \to \K^m$. The corresponding mapping $D\circ \varphi \circ E$ is called an encoder-decoder architecture with encoder $E$ and decoder $D$.
\end{definition}
A similar definition has been given, for example for normed spaces in \cite[Section 4]{Kovachki2024_review}, but therein, the encoder and decoder were supposed to be linear and continuous, which we both do not require here. Recall that one of our main goals is to approximate any operator $G$ by a sequence $G_n$ of encoder-decoder architectures uniformly on every compact $K\subseteq \X$, see statement \labelcref{fig:approximation_types:(B)} in \Cref{fig:approximation_types}.
For that, one has to ensure that the encoding does not loose too much information from the input space $\X$, whereas the decoding can provide enough information in the output space $\Y$. We introduce the following \emph{encoder-decoder approximation properties (EDAP)} of metric spaces which will be sufficient for deriving our universal approximation \Cref{thm:sequential_density_normed}. If approximation as in statement \labelcref{fig:approximation_types:(A)} is desired, we provide similar properties in Section \ref{sec:CEDAP}.

\begin{definition}[Input-EDAP]\label{def:input_EDAP}
    A metric space $(\X, d)$ is said to have the input-EDAP if there are sequences of mappings $E_n^\X:\X\to \K^{w_\X(n)}$ and $D_n^\X:\overline{E^\X_n(\X)}\to \X$ with the following properties:
    \begin{itemize}
        \item[(i)] There is $r:\N\to (0,\infty)$ such that for each compact $K\subseteq \X$ there is $N_K\in \N$ such that for all $n\geq N_K$ it is
        \begin{align*}
            \sup_{f\in K} \vert E_n^\X(f)\vert \leq r(n).
        \end{align*}
        Note that $E_n^\X$ is allowed to be discontinuous.
        \item[(ii)] $D_n^\X$ is continuous.
        \item[(iii)] The mappings $T_n^\X \coloneqq D_n^\X \circ E_n^\X$ satisfy that for every compact $K\subseteq \X$ it is 
        \begin{align*}
            \sup_{f\in K} d\left(f\, ,\, T_n^\X (f)\right) \xrightarrow{n\to\infty}0.
        \end{align*}
    \end{itemize}
\end{definition}

\begin{definition}[Output-EDAP]\label{def:output_EDAP}
     A metric space $(\X, d)$ is said to have the output-EDAP if there are sequences of mappings $E_n^\X:\X\to \K^{w_\X(n)}$ and $D_n^\X:\K^{w_\X(n)}\to \X$ with the following properties:
    \begin{itemize}
        \item[(i)] $E_n^\X$ is continuous.
        \item[(ii)] $D_n^\X$ is uniformly continuous, i.e., for every $\varepsilon>0$ there exists $\delta_{n, \varepsilon}>0$ such that for all $a,b\in \K^{w_\X(n)}$ with $\vert a-b\vert < \delta_{n, \varepsilon}$ it follows that $d(D_n^\X(a), D_n^\X(b)) \leq \varepsilon$.
        \item[(iii)] The mappings $T_n^\X \coloneqq D_n^\X \circ E_n^\X$ satisfy that for every compact $K\subseteq \X$ it is 
        \begin{align*}
            \sup_{f\in K} d\left(f\, ,\, T_n^\X (f)\right) \xrightarrow{n\to\infty}0.
        \end{align*}
    \end{itemize}
\end{definition}
If the space $\X$ is clear from the context, we simply write $E_n\coloneqq E_n^\X$ and $D_n\coloneqq D_n^\X$ for the encoders and decoders, respectively.
\begin{remark}
    In contrast to the input-EDAP, for the output-EDAP, the decoders $D_n$ must be defined on whole $\K^{w_\X(n)}$. This is important to obtain well-defined concatenations $D_n^\Y\circ \varphi_n\circ E_n^\X$ in \Cref{thm:sequential_density_normed}.
\end{remark}

\begin{remark}
    Every space having the input- or output-EDAP must be separable: Let $\X$ have the input- or output-EDAP, which implies that
    \begin{align*}
        \X = \overline{\bigcup_{n=1}^\infty (D_n\circ E_n)(\X)}.
    \end{align*}
    Since $\K^{w_\X(n)}$ is separable and all $D_n$ are continuous, it follows that $(D_n\circ E_n)(\X)$ is separable for all $n\in \N$. Hence, also the union over $n\in \N$, which implies separability of $\X$, as it is the closure of a separable set.
\end{remark}

\begin{remark}
    At first glance, one might think that the output-EDAP implies the input-EDAP. However, for the latter, the function $r:\N\to(0,\infty)$ must be independent of the compact sets $K$, whereas continuity of the encoders in the definition of the output-EDAP leads only to compact-dependent functions $r_K$. 
\end{remark}
Below we state two simple observations. First, both EDAPs are preserved when switching to any equivalent metric. Further, we provide a simple criterion for a subset $M\subset \X$ to inherit the input-EDAP from $\X$. 
\begin{lemma}\label{lem:EDAP_equivalent_metrics}
    Consider two metrics $d_1$ and $d_2$ on a set $\X.$ 
    \begin{itemize}
        \item[(i)] If $\textup{id}:(\X, d_1)\to (\X, d_2)$ is a homeomorphism, then $(\X, d_1)$ has the input-EDAP if and only if $(\X, d_2)$ has the input-EDAP. 
        \item[(ii)] If $\textup{id}:(\X, d_1)\to (\X, d_2)$ and $\textup{id}:(\X, d_1)\to (\X, d_2)$ are uniformly continuous, then $(\X, d_1)$ has the output-EDAP if and only if $(\X, d_2)$ has the output-EDAP. 
        \end{itemize} 
        In any case, the choices for suitable encoders and decoders are the same.
\end{lemma}
\begin{proof}
    This lemma is easy to verify. Note that if the identity $\textup{id}:(\X, d_1)\to (\X, d_2)$ is a homeomorphism, $\X$ has the same compact sets with respect to $d_1$ or $d_2$.
    Further, uniform convergence on compact sets is preserved as the identity is continuous and hence uniformly continuous on any compact set.
\end{proof}
\begin{lemma}\label{lem:subspace_EDAP}
    Let $(\X, d)$ be a metric space having the input-EDAP with encoders $E_n$ and decoders $D_n$. Assume that for $M\subset \X$ it holds that $D_n\left( \overline{E_n(M)}\right)\subseteq M$ for all $n\in \N$. Then $(M, d)$ also has the input-EDAP.
\end{lemma}

The definitions of the input- and output-EDAP are inspired by and resemble the approximation property (AP) of locally convex  topological vector spaces, which has been intensively studied in \cite{Grothendieck}, see also \cite[Chapter 43]{Koethe_TopVecSpaces}. 
For \Cref{def:AP} of the AP in normed spaces, we follow \cite{Casazza2001}, which also contains a comprehensive survey of various types of approximation properties and their interrelations.
In contrast to the EDAPs, the standard AP requires the approximating mappings $T_n$ to be linear and continuous, and allows them to depend on the compact set $K$.
A key feature of the EDAP framework is that it is formulated entirely in terms of the metric structure and does not require compatibility with any underlying linear structure. This allows one to treat spaces such as $p$-Wasserstein spaces ($p \geq 1$), as well as Skorohod spaces. Note that Skorohod spaces are vector spaces, but their topology is not compatible with the linear structure, in the sense that the addition of vectors is not continuous. 
That these metric spaces indeed have the input- or output-EDAP is outlined in Section \ref{sec:metric_EDAP_spaces}. There, we also present natural examples of discontinuous encoders for the input-EDAP and non-Lipschitz decoders suitable for the output-EDAP.

\begin{definition}[Approximation property]\label{def:AP}
    A normed space $\X$ has the approximation property (AP) if for every compact $K\subseteq\X$ there are mappings $T_{K, n}^\X:\X\to \X$ with the following properties:
    \begin{itemize}
        \item[(i)] All $T_{K, n}^\X$ are linear and bounded.
        \item[(ii)] All $T_{K, n}^\X$ map into a finite dimensional subspace of $\X$.
        \item[(iii)] It holds that $$\sup_{f\in K} \Vert T_{K, n}^\X (f) - f \Vert \xrightarrow{n\to\infty} 0.$$
    \end{itemize}
\end{definition}
Below we state \cite[Lemma 22]{Kovachki2023}, which is a universal approximation theorem, in which the AP has been used for constructing suitable encoder-decoder architectures. We provide a similar universal approximation result for more diverse encoder-decoder architectures in \Cref{sec:CEDAP}, and a more relaxed version of the AP.
\begin{theorem}
    Let $\X$ and $\Y$ be $\R$-Banach spaces both having the AP. For every $G\in \CC(\X, \Y)$ and compact set $K\subseteq \X$, there exists a sequence of encoder-decoder architectures $G_{K, n}\in \CC(\X, \Y)$ such that 
    \begin{align*}
        \sup_{f\in K} \big \Vert G(f) - G_{K, n}(f) \big \Vert \xrightarrow{n\to\infty} 0. 
    \end{align*}
\end{theorem}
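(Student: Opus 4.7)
The plan is to build each $G_{K,n}$ of the form $D_n\circ\varphi_n\circ E_n$ that realizes the two-sided conjugate $T_n^\Y\circ G\circ T_n^\X$, where $T_n^\X,T_n^\Y$ are the bounded linear finite-rank approximants granted by the AP on $\X$ and $\Y$. The error will then split via the triangle inequality into a ``domain error'' controlled by uniform continuity of $G$ and a ``range error'' controlled by the AP on $\Y$.

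First, I would apply the AP on $\X$ to the compact set $K$ to obtain bounded linear $T_n^\X:\X\to\X$ with finite-dimensional range $\X_n$ and $\delta_n:=\sup_{f\in K}\|T_n^\X(f)-f\|\to 0$. The crucial preparatory step is to verify that $L:=K\cup\bigcup_{n\in\N}T_n^\X(K)$ is totally bounded in $\X$, so that $\overline{L}$ is compact (using completeness of $\X$). Given $\varepsilon>0$, choose $N$ so that $\delta_n<\varepsilon$ for $n\ge N$; any finite $\varepsilon$-net of the compact set $K$ is then a $2\varepsilon$-net for $K\cup\bigcup_{n\ge N}T_n^\X(K)$, and the remaining finitely many compacta $T_1^\X(K),\ldots,T_{N-1}^\X(K)$ each admit a finite $\varepsilon$-net. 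Consequently $\hat{K}:=G(\overline{L})$ is compact in $\Y$, and applying the AP on $\Y$ to $\hat{K}$ yields bounded linear $T_n^\Y:\Y\to\Y$ with finite-dimensional range $\Y_n$ and $\varepsilon_n:=\sup_{g\in\hat{K}}\|T_n^\Y(g)-g\|\to 0$.

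Second, I would factor $T_n^\Y\circ G\circ T_n^\X$ through Euclidean spaces. Pick bases $(\phi_i^n)_{i=1}^{N_n}$ of $\X_n$ and $(\psi_j^n)_{j=1}^{M_n}$ of $\Y_n$, and write $T_n^\X(f)=\sum_i\alpha_i^n(f)\phi_i^n$ and $T_n^\Y(g)=\sum_j\beta_j^n(g)\psi_j^n$ with continuous linear coordinate functionals $\alpha_i^n,\beta_j^n$. Setting
\[
E_n(f):=\bigl(\alpha_i^n(f)\bigr)_{i=1}^{N_n},\quad D_n(c):=\sum_{j=1}^{M_n}c_j\,\psi_j^n,\quad \varphi_n(c):=\Bigl(\beta_j^n\bigl(G\bigl(\textstyle\sum_{i=1}^{N_n}c_i\phi_i^n\bigr)\bigr)\Bigr)_{j=1}^{M_n},
\]
all three maps are continuous ($E_n,D_n$ are linear, $\varphi_n$ is a composition of continuous maps), so $G_{K,n}:=D_n\circ\varphi_n\circ E_n$ is an encoder-decoder architecture in the sense of Definition~\ref{def:encoderDecoder_Architecture}, and a direct computation gives the key identity $G_{K,n}(f)=T_n^\Y(G(T_n^\X(f)))$.

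Finally, for $f\in K$ both $f$ and $T_n^\X(f)$ lie in $\overline{L}$, so $G|_{\overline{L}}$ is uniformly continuous with some modulus $\omega$ and $G(T_n^\X(f))\in\hat{K}$; hence
\[
\|G(f)-G_{K,n}(f)\|\le\|G(f)-G(T_n^\X(f))\|+\|G(T_n^\X(f))-T_n^\Y(G(T_n^\X(f)))\|\le\omega(\delta_n)+\varepsilon_n\xrightarrow{n\to\infty}0,
\]
uniformly in $f\in K$. The main obstacle---what forces the enlargement step above---is that one cannot apply the $\Y$-side AP directly to $G(K)$, because $T_n^\X(f)$ need not lie in $K$ and hence $G(T_n^\X(f))$ need not lie in $G(K)$; the one nontrivial metric-space input is therefore the relative compactness of $K\cup\bigcup_n T_n^\X(K)$, which is where completeness of the Banach space $\X$ is used.
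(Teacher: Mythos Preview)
Your proposal is correct. The paper does not give its own proof of this statement but cites it as \cite[Lemma~22]{Kovachki2023}; your argument follows the same route, and the total-boundedness step you isolate for $K\cup\bigcup_n T_n^\X(K)$ is precisely the content of the paper's Lemma~\ref{lem:union_compact_sets} (applied with $G_n=T_n^\X$ and $G=\mathrm{id}_\X$), which the paper itself notes is a version of \cite[Lemma~21]{Kovachki2023}.
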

In other words, the AP enabled universal operator approximation as in statement \labelcref{fig:approximation_types:(A)} of \Cref{fig:approximation_types}, where the approximating sequence $G_{K, n}$ depends on the compact set $K$. However, since our goal is to get rid of this dependence on $K$ in statement \labelcref{fig:approximation_types:(B)}, the standard AP is not a sufficient property, which motivated our definition of the input- and output-EDAP with $K$-independent mappings $T_n$. 
Important examples of spaces possessing both EDAPs are separable normed spaces that have the bounded approximation property (BAP). Since we have already observed that any space with either the input-EDAP or the output-EDAP is necessarily separable, we restrict the definition of the BAP below to separable normed spaces.
For separable Banach spaces, an overview of the relationships among different approximation properties is shown in \Cref{fig:relations_approximation_properties}.

\begin{definition}[Bounded approximation property]\label{def:BAP}
    Let $\lambda>0$. A separable normed space $\X$ has the $\lambda$-bounded approximation property ($\lambda$-BAP) if there exists a sequence of mappings $\T^\X_n: \X \to \X$ with the following properties:
    \begin{itemize}
        \item[(i)] All $\T^\X_n$ are linear with $\Vert T_n^\X (f) \Vert \leq \lambda \Vert f\Vert$ for all $f\in \X$.
        \item[(ii)] All $T_n^\X$ have finite dimensional range.
        \item[(iii)] For every compact $K\subseteq \X$ it holds that 
        \begin{align*}
            \sup_{f\in K}\big\Vert \T^\X_n (f) - f \big\Vert \xrightarrow{n\to\infty}0.
        \end{align*}
    \end{itemize}
    We say that $\X$ has the BAP if there exists some $\lambda>0$ such that it has the $\lambda$-BAP.
    If the space $\X$ is clear from the context, we simply write $\T_n$ instead of $\T^\X_n.$ 
\end{definition}
\begin{remark}
    Note that in the usual definition of the $\lambda$-BAP, covering also non-separable spaces, the mappings $T_n$ depend on the compact set $K$, see for example \cite[Definition 3.1]{Casazza2001} or \cite[Chapter 43.8]{Koethe_TopVecSpaces}. Nevertheless, for separable normed spaces, it is due to the uniform Lipschitz constant $\lambda$, that these are equivalent definitions, see \Cref{lem:Lipschitz_K_independence} or \cite[Corollary 3.4]{Casazza2001}.
\end{remark}

\begin{remark}[Lipschitz BAP]\label{rem:LBAP}
    As a last, but not least, type of approximation property, we want to mention the so-called Lipschitz bounded approximation property ($\lambda$-LBAP). It is similarly defined as the $\lambda$-BAP, but with allowing $T_n$ to be non-linear with Lipschitz constants $\lambda$, see \cite{Godefroy2003_LBAP, Godefroy2015_survey_LBAP}.
    Nevertheless, for a Banach space $\X$, it has been shown in \cite[Theorem 5.3]{Godefroy2003_LBAP} that $\X$ has the $\lambda$-BAP if and only if it has the $\lambda$-LBAP.
\end{remark}

\begin{figure}
    \centering
    \begin{tikzpicture}[node distance=10mm, thick,
                        rblock/.style = {draw, rectangle, rounded corners=0.5em, minimum height=7mm, minimum width=17mm, align=center, fill=gray!10},
                        ]
    \node [rblock]                      (lambdaBAP)     {\large \hyperref[def:BAP]{$\lambda$-BAP}};
    \node [rblock, above=of lambdaBAP]  (inputEDAP)    {\large\raisebox{-2ex}{\hyperref[def:input_EDAP]{Input-EDAP}}};
    \node [rblock, below=of lambdaBAP]  (AP)            {\large \hyperref[def:AP]{AP}};
    \node [rblock, right=of lambdaBAP]  (lambdaLBAP)    {\large \hyperref[rem:LBAP]{$\lambda$-LBAP}};
    \node [rblock, left=of lambdaBAP]   (outputEDAP)     {\large\raisebox{-2ex}{\hyperref[def:output_EDAP]{Output-EDAP}}};
    \draw [-{Computer Modern Rightarrow[length=3mm, width=3mm]}, shorten >=3pt, shorten <=3pt]      (lambdaBAP) -- (AP);
    \begin{scope}[transform canvas={yshift=.5em}]
        \draw [-{Computer Modern Rightarrow[length=3mm, width=3mm]}, shorten >=3pt, shorten <=3pt]      (lambdaBAP) -- (lambdaLBAP);
    \end{scope}
    \begin{scope}[transform canvas={yshift=-.5em}]
        \draw [-{Computer Modern Rightarrow[length=3mm, width=3mm]}, shorten >=3pt, shorten <=3pt]      (lambdaLBAP) -- (lambdaBAP);
    \end{scope}
    \draw [-{Computer Modern Rightarrow[length=3mm, width=3mm]}, shorten >=3pt, shorten <=3pt]      (lambdaBAP) -- (outputEDAP);
    \draw [-{Computer Modern Rightarrow[length=3mm, width=3mm]}, shorten >=3pt, shorten <=3pt]      (lambdaBAP) -- (inputEDAP);
    \end{tikzpicture}
    \caption{Relations between different types of approximation properties for separable Banach spaces. Implications are indicated with an arrow.}
    \label{fig:relations_approximation_properties}
\end{figure}

In the following, we discuss different examples of spaces having the input- and output-EDAP, and describe various explicit choices for suitable encoders and decoders. First, normed spaces are considered, followed by metric spaces. 

\subsection{Examples of normed spaces}\label{sec:normed_examples_EDAP}
Before discussing examples of normed spaces having the EDAPs, we start with a helpful lemma which simplifies proving that a Banach space $\X$ has the BAP, so in particular also both the input- and output-EDAP (if $\X$ is separable). It follows from the Banach-Steinhaus theorem, also known as the uniform boundedness principle. For the latter, we refer to \cite[Chapter 4.2]{Kreyszig_FuncAna}.
\begin{lemma}\label{lem:BAP_pointwise_convergence_suffices}
    Let $\Y$ be a normed and $\X$ be a Banach space. Assume that a sequence of linear, bounded operators $A_n:\X\to\Y$ converges pointwise to some $A:\X\to \Y$. Then $A$ defines a linear and bounded operator and for every compact $K\subseteq \X$ it holds that 
    \begin{align*}
        \sup_{f\in K} \Vert A_n (f) - A(f) \Vert \xrightarrow{n\to\infty} 0.
    \end{align*}
    Further, there is some $\lambda>0$ such that for all $n\in \N$ it holds that $\Vert A_n (f) \Vert \leq \lambda\Vert f\Vert$ for all $f\in \X$.
\end{lemma}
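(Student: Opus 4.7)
The plan is to proceed in three short stages: first use the Banach--Steinhaus theorem to extract the uniform bound $\lambda$, then transfer linearity and boundedness to the limit operator $A$, and finally upgrade pointwise convergence to uniform convergence on compacta by a standard $\varepsilon/3$ argument that exploits total boundedness of $K$.

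For the first stage, I would observe that for each fixed $f\in \X$ the sequence $A_n(f)$ converges in $\Y$, hence is bounded in $\Y$. Since $\X$ is a Banach space and every $A_n:\X\to\Y$ is linear and bounded, the uniform boundedness principle applies and yields some $\lambda>0$ with $\Vert A_n\Vert_{\mathrm{op}}\leq \lambda$ for all $n\in \N$. This is precisely the last claim of the lemma and is the one step that genuinely uses completeness of $\X$. Linearity of $A$ then follows immediately from linearity of each $A_n$ together with continuity of addition and scalar multiplication in $\Y$, and boundedness of $A$ is obtained by passing to the limit in $\Vert A_n(f)\Vert \leq \lambda \Vert f\Vert$, giving $\Vert A(f)\Vert \leq \lambda \Vert f\Vert$.

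For the uniform convergence on a compact $K\subset\X$, fix $\varepsilon>0$ and use that compactness in a metric space implies total boundedness: choose finitely many $f_1,\dots,f_N\in K$ such that the open balls $B_{\varepsilon/(3\lambda+1)}(f_i)$ cover $K$. By pointwise convergence, there exists $M\in \N$ such that $\Vert A_n(f_i)-A(f_i)\Vert < \varepsilon/3$ for every $i\in\{1,\dots,N\}$ and every $n\geq M$. Given an arbitrary $f\in K$, pick $i$ with $\Vert f-f_i\Vert<\varepsilon/(3\lambda+1)$ and estimate
$$
\Vert A_n(f)-A(f)\Vert \leq \Vert A_n(f-f_i)\Vert + \Vert A_n(f_i)-A(f_i)\Vert + \Vert A(f_i-f)\Vert,
$$
where the first and third summands are bounded by $\lambda \cdot \varepsilon/(3\lambda+1) < \varepsilon/3$ thanks to the uniform bound $\lambda$ (valid also for $A$). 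Taking the supremum over $f\in K$ gives the desired limit.

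There is no real obstacle here; the only subtle point is that without the uniform bound $\lambda$ one could not control $\Vert A_n(f-f_i)\Vert$ simultaneously in $n$, which is why Banach--Steinhaus is indispensable and the assumption that $\X$ is complete is needed. Everything else is bookkeeping with the triangle inequality.
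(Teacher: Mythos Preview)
Your proof is correct and follows exactly the approach the paper indicates: the paper does not give a detailed argument but simply states that the lemma follows from the Banach--Steinhaus theorem, which is precisely what you invoke to obtain the uniform bound $\lambda$ and then combine with a standard $\varepsilon/3$ argument over a finite net in $K$. Your write-up fills in the details the paper omits, with no gaps.
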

An overview of the constructed encoders and decoders for the provided examples below can be found in \Cref{tab:encoder_decoder_overview}.

\subsubsection{Banach spaces with Schauder bases}\label{sec:Tn_schauder}
As a first straightforward example of spaces that fulfill the BAP, hence both the input- and output-EDAP, we discuss Banach spaces with Schauder bases in what follows. We restrict the attention to infinite-dimensional spaces and for a definition, we follow \cite{Semadeni_SchauderBases}. However, finite-dimensional spaces can be treated analogously.
\begin{definition}[Schauder basis]\label{def:schauder_basis}
    Let $\X$ be a Banach space over $\K$. A sequence $(b_i)_{i\in \N}$ in $\X$ is called a Schauder basis of $\X$ if for every $f\in \X$ there are unique $c_i(f)\in \K$ such that
    \begin{align*}
        f = \sum_{i=1}^\infty c_i(f) b_i.
    \end{align*}
\end{definition}
Note that the uniqueness of the coefficients already implies that the coefficient functionals $c_i:\X\to \K$ are linear and countinuous, since $\X$ is a Banach space. Therefore, and due to \Cref{lem:BAP_pointwise_convergence_suffices}, the following well-known observation is immediate.

\begin{theorem}\label{thm:schauder_basis_BAP}
    Every Banach space $\X$ with a Schauder basis $(b_i)_{i\in \N}$ has the BAP with the projections $T_n=D_n\circ E_n$ defined by
    \begin{align*}
        E_n:\X &\longrightarrow \K^n 
        & \hspace{-1cm}
        D_n:\K^n &\longrightarrow \X \\
        f &\longmapsto \big(c_1(f),\dots,c_n(f)\big)^\intercal,
        & \hspace{-1cm}
        \mu &\longmapsto \sum_{i=1}^n \mu_i b_i,
    \end{align*}
     where $c_i$ are the linear, continuous coefficient functions with respect to the Schauder basis. We call $E_n$ and $D_n$ basis encoders and basis decoders, respectively.
\end{theorem}

Lots of separable Banach spaces possess Schauder bases and hence also have the BAP, input- and output-EDAP. For example, Schauder bases exist in every separable Hilbert space, in several spaces of continuously differentiable functions and  Lebesgue spaces (except $L^\infty$) \cite[Section 1]{Lindenstrauss_classical_banach_spaces}, and in some Sobolev spaces \cite{Fucik1972}.
It is to be noted that not every separable Banach space has a Schauder basis, nor fulfills the BAP. A first counterexample has been found by P. Enflo in 1973 \cite{Enflo1973}.

\begin{remark}[Fr\'echet spaces]\label{rem:schauder_basis_frechet_EDAP}
    For the sake of simplicity, we considered Banach spaces above. Yet, more general spaces can be considered: Let $\X$ be a separable Fr\'echet space, that is, a complete metrizable locally convex space, see e.g. \cite[Chapter 18.2]{Koethe_TopVecSpaces_I}. Let $d$ be any metric inducing the topology on $\X$. Assume that $\X$ has a Schauder basis $(b_i)_{i\in \N}$, which is similarly defined as in \cref{def:schauder_basis}, but in addition, one assumes that all $c_i$ are continuous, see \cite[Chapter 43.5]{Koethe_TopVecSpaces}. Then $(\X, d)$ has both the input- and output-EDAP, where the encoders and decoders can be chosen as in \cref{thm:schauder_basis_BAP}. Uniform convergence of $D_n\circ E_n$ to the identity mapping on any compact $K\subseteq \X$ has been outlined, for example, in \cite[Section 4]{Benth_2022}. Uniform continuity of each $D_n$ follows from \cite[Chapter 15.2 (4)]{Koethe_TopVecSpaces_I}. Continuity of each $E_n$ is clear. According to \cite[Chapter 15.6 (5)]{Koethe_TopVecSpaces_I}, $E_n$ maps bounded sets to bounded sets, which implies property (i) in \cref{def:input_EDAP}.
\end{remark}

\subsubsection{Frames in Hilbert spaces}\label{sec:Tn_frames}
Frames are a generalization of Schauder bases, as they relax the uniqueness constraint on the basis coefficients $c_i$. For the sake of a simpler notation, we consider in what follows the classical case of an infinite-dimensional, separable Hilbert space $\X$ with inner product $\langle \cdot, \cdot\rangle.$ However, note that finite-dimensional spaces can be handled in a similar way \cite[Chapter 1]{Christensen_Frames}. Furthermore, frames can be generalized to non-separable Hilbert spaces \cite{Bilalov_Frames_nonSeparableSpace} as well as to a Banach space setting \cite[Chapter 24]{Christensen_Frames}.
\begin{definition}[Frame]\label{def:frame}
     A frame of an infinite-dimensional, separable Hilbert space $\X$ is a sequence $(f_i)_{i\in \N}$ in $\X$, if there exist constants $A,B>0$ such that for all $f\in \X$ it holds that 
    \begin{align*}
        A\Vert f \Vert^2 \leq \sum_{i=1}^\infty \vert \langle f, f_i\rangle\vert^2 \leq B \Vert f\Vert ^2.
    \end{align*}
    The values $A$ and $B$ are called frame bounds.
\end{definition}

For a given a frame $(f_i)_{i\in \N}$ in $\X$, it is well-known that there exists a canonical dual frame of $(f_i)_{i\in \N}$ defined by $\big(S^{-1}(f_i)\big)_{i\in \N} \subset \X$ via the inverse of the frame operator $S: \X \to\X$ given by $S(f) \coloneqq \sum_{i=1}^\infty\langle f, f_i\rangle f_i,$ so that every $f\in \X$ can be expressed as
\begin{equation*}
    f = \sum_{i=1}^\infty \big\langle f,\, S^{-1} (f_i)\big\rangle f_i,
\end{equation*}
see \cite[Theorem 5.1.6]{Christensen_Frames}. This motivates the definition of, more generally, dual frames.

\begin{definition}[Dual Frame]\label{def:dualFrame}
    Let $\X$ be an infinite-dimensional, separable Hilbert space and $(f_i)_{i\in \N} \subset \X$ a given frame of $\X.$ A frame $(f^*_i)_{i\in \N} \subset \X$ of $\X$ is called a dual frame of $(f_i)_{i\in \N},$ if for all $f\in \X$ it holds that
    \begin{equation}\label{eq:def:dualFrame}
        f = \sum_{i=1}^\infty \langle f, f^*_i\rangle f_i.
    \end{equation}
\end{definition}

A frame has a dual frame different from the canonical dual frame if and only if it is not a Riesz basis, see, e.g., \cite[Lemma 6.3.1]{Christensen_Frames}. 
In the following, we state the main result of this section, which will give rise to the construction of encoders and decoders based on frames.
\begin{theorem}\label{thm:frames}
    Let $\X$ be an infinite-dimensional, separable Hilbert space and $(f_i)_{i\in \N} \subset \X$ a given frame of $\X$ along with a dual frame $(f^*_i)_{i\in \N} \subset \X$ of $(f_i)_{i\in \N}.$ Then $\X$ has the BAP with the projections $T_n = D_n\circ E_n$ defined by
    \begin{align*}
        E_n:\X &\longrightarrow \K^n 
        & \hspace{-1cm}
        D_n:\K^n &\longrightarrow \X \\
        f &\longmapsto \big( \langle f, f^*_1 \rangle, \dots, \langle f, f^*_n \rangle \big)^\intercal,
        & \hspace{-1cm}
        \mu &\longmapsto \sum_{i=1}^n \mu_i f_i.
    \end{align*}
    We call $E_n$ and $D_n$ frame encoders and frame decoders, respectively.
\end{theorem}
\begin{proof}
    The statement follows immediately by the definition of the projections $T_n$ and by applying \Cref{lem:BAP_pointwise_convergence_suffices}.
\end{proof}

\subsubsection{Sampling in spaces of continuous functions}\label{sec:Tn_sampling}
Given some compact metric space $\Omega$ and $a\in \N$, we consider the space $\X=\CC(\Omega, \K^a)$ equipped with the supremum norm throughout this section.
If $\Omega$ is uncountable, it is due to Milutin's theorem that $\CC(\Omega, \K^a)$ is isomorphic to $\CC([0,1], \K^a)$, see for example \cite[Chapter 4.i]{Lindenstrauss_classical_banach_spaces}. Since the latter has a Schauder basis, according to \cite[Chapter 1]{Semadeni_SchauderBases}, also $\CC(\Omega, \K^a)$ must possess a Schauder basis, meaning that it has the BAP according to \Cref{sec:Tn_schauder}.
However, the corresponding choice of $T_n(f) = \sum_{i\leq n} c_i(f) b_i$ is not the only possible choice for $T_n$ in \Cref{def:BAP} of the BAP, which we want to outline in what follows. 
The constructions within this section are inspired from \cite[Lemma 7]{Chen1995_approx_thm}, in which the case $\CC(\Omega, \R)$ and $\Omega$ being a compact subset of a Banach space was considered.

\begin{definition}[$\varepsilon$-covering]
    Let $\Omega$ be a metric space and $\varepsilon>0$. An $\varepsilon$-covering of $\Omega$ is a set $M \subseteq \Omega$ such that 
    \begin{align*}
        \Omega \subseteq \bigcup_{x\in M}B_\varepsilon(x)
    \end{align*}
    Note that if $\Omega$ is compact, there exists a finite $\varepsilon$-covering for every $\varepsilon>0$.
\end{definition}

\begin{lemma}\label{lem:partition_unity}
    Let $(\Omega, d)$ be a compact metric space. Further, let $\varepsilon>0$ and $\{y_1,\dots, y_k\}$ be an $\varepsilon$-covering for $\Omega$. Then there exist finitely many continuous mappings $P_{\varepsilon, i}:\Omega \to [0,1]$ for $i\in \{1,\dots,k\}$ such that 
    \begin{enumerate}[(i)]
        \item For every $y\in \Omega$ it is $\sum_{i=1}^{k} P_{\varepsilon, i}(y) = 1$.\label{lem:partition_unity:i}
        \item For every $y\in \Omega$ it holds that $P_{\varepsilon, i}(y) = 0$ whenever $d( y\, ,\, y_i) \geq \varepsilon.$\label{lem:partition_unity:ii}
    \end{enumerate}
    If $\Omega\subset \R^p$, the mappings $P_{\varepsilon, i}$ can be chosen to be a smooth function belonging to $\CC^\infty(\Omega, \R)$.
\end{lemma}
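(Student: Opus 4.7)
The plan is to construct the $P_{\varepsilon, i}$ by the standard partition-of-unity trick: first build continuous bump-like functions whose supports lie in the open balls $B_\varepsilon(y_i)$, check that their sum is pointwise strictly positive thanks to the covering property, and then normalize.

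Concretely, for each $i \in \{1, \ldots, k\}$ I would define
\begin{equation*}
    \phi_i(y) \coloneqq \max\big(0\, ,\, \varepsilon - d(y, y_i)\big),
\end{equation*}
which is continuous on $\Omega$ since $y \mapsto d(y, y_i)$ is continuous, takes values in $[0, \varepsilon]$, and vanishes precisely when $d(y, y_i) \geq \varepsilon$. The key observation is that since $\{y_1, \ldots, y_k\}$ is an $\varepsilon$-covering of $\Omega$, for every $y \in \Omega$ there exists some index $i$ with $d(y, y_i) < \varepsilon$, hence $\phi_i(y) > 0$. Therefore $\Phi(y) \coloneqq \sum_{i=1}^k \phi_i(y) > 0$ on all of $\Omega$, and I can safely set
\begin{equation*}
    P_{\varepsilon, i}(y) \coloneqq \frac{\phi_i(y)}{\Phi(y)}.
\end{equation*}
Continuity of $P_{\varepsilon, i}$ is immediate as a ratio of continuous functions with non-vanishing denominator, and both \ref{lem:partition_unity:i} and \ref{lem:partition_unity:ii} follow directly: the normalization gives the summation to one, and the numerator still vanishes exactly where $\phi_i$ does. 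No serious obstacle is expected here; it is a textbook computation.

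For the smooth addendum when $\Omega \subseteq \R^p$, I would replace $\phi_i$ by a smooth bump supported on the open ball $B_\varepsilon(y_i)$ in the Euclidean metric. A convenient choice is the classical $\eta(t) = \exp(-1/t)$ for $t > 0$ and $\eta(t) = 0$ for $t \leq 0$, which is in $\CC^\infty(\R, \R)$; then
\begin{equation*}
    \phi_i(y) \coloneqq \eta\!\left(\varepsilon^2 - \vert y - y_i\vert^2\right)
\end{equation*}
is smooth on $\R^p$, strictly positive on $B_\varepsilon(y_i)$, and identically zero when $\vert y - y_i \vert \geq \varepsilon$. The covering property again ensures $\Phi = \sum_i \phi_i > 0$ on $\Omega$, so that $P_{\varepsilon, i} = \phi_i / \Phi$ is a well-defined element of $\CC^\infty(\Omega, \R)$ and satisfies both conditions by the same argument as above. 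The only mild subtlety is choosing the bump so that its vanishing set agrees with $\{y : d(y, y_i) \geq \varepsilon\}$ rather than a slightly smaller set, which the explicit form above handles.
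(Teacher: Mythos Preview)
Your proposal is correct and follows essentially the same route as the paper: build nonnegative bumps supported in the balls $B_\varepsilon(y_i)$, use the covering property to guarantee the sum is strictly positive, and normalize. The only cosmetic difference is that the paper uses the smooth bump $\exp\big(-1/(\varepsilon^2 - d(y,y_i)^2)\big)$ already in the general metric case (noting afterward that the linear bump you chose was used in \cite{Chen1995_approx_thm}), whereas you reserve the smooth bump for the Euclidean addendum; both choices work and the argument is otherwise identical.
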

\begin{proof}
    For the construction, we start by defining the continuous functions
    \begin{align*}
        \Tilde{P}_{\varepsilon, i}:\Omega &\longrightarrow [0,\infty) \\
        y&\longmapsto \begin{cases}
            \exp{\left(-\frac{1}{\varepsilon^2 - d(y,y_i)^2}\right)} &\text{if}\quad d(y,y_i)<\varepsilon, \\
            0& \text{else}.
        \end{cases}
    \end{align*}
    Note that property \labelcref{lem:partition_unity:ii} is already satisfied for $\Tilde{P}_{\varepsilon, i}$. In order to ensure property \labelcref{lem:partition_unity:i}, we normalize the functions by defining
    \begin{equation}\label{eq:P_eps_i}
         P_{\varepsilon, i}(y) \coloneqq \frac{ \Tilde{P}_{\varepsilon, i}(y)}{\sum_{\ell=1}^{k} \Tilde{P}_{\varepsilon, \ell}(y)}.
    \end{equation}
    By definition of $\{y_1,\dots, y_k\}$ being an $\varepsilon$-covering for $\Omega,$ note that for each $y\in \Omega$ there exists $i\in \{1,\dots,k\}$ such that $y\in B_\varepsilon(y_i)$. Hence, $\sum_{\ell=1}^{k} \Tilde{P}_{\varepsilon, \ell}(y)> 0$ for all $y\in \Omega$ so that the $P_{\varepsilon, i}$ are well-defined. Additionally, the $P_{\varepsilon, i}$ fulfill both properties \labelcref{lem:partition_unity:i} and \labelcref{lem:partition_unity:ii} leading to the desired partition of unity.\newline
    In the case of $\Omega\subseteq \R^p$, it is well-known that the so-called bump functions $\Tilde{P}_{\varepsilon, i}$ belong to $\CC^\infty(\Omega, \R)$ (see e.g.\ \cite[Page 36]{Adams_SobolevSpaces}). Together with the fact that $\sum_{\ell=1}^{k} \Tilde{P}_{\varepsilon, \ell}(y)> 0$ for all $y\in \Omega$, this leads to $P_{\varepsilon, i}\in \CC^\infty(\Omega, \R)$.
\end{proof}

\begin{theorem}\label{thm:sampling_BAP}
    Let $\Omega$ be a compact metric space and consider $\CC(\Omega, \K^a)$ equipped with the supremum-norm. For $n\in \N$ let $\left\{y_1^{(n)},\dots,y_{k(n)}^{(n)} \right\}$ be an $1/n$-covering for $\Omega$ and $P_{\frac{1}{n}, i}$ the partition of unity on $\Omega$ from \Cref{lem:partition_unity}. Then $\CC(\Omega, \K^a)$ has the BAP with the mappings $T_n = D_n\circ E_n$ defined by
    \begin{align*}
        E_n: \CC(\Omega, \K) &\longrightarrow \K^{k(n)}
        & \hspace{-1cm}
        D_n: \K^{k(n)} &\longrightarrow \CC(\Omega, \K) \\
        f &\longmapsto \Big(f(y_1^{(n)}),\dots, f(y_{k(n)}^{(n)})\Big)^\intercal,
        & \hspace{-1cm}
        \mu &\longmapsto \sum_{i=1}^{k(n)} \mu_i P_{\frac{1}{n}, i}.
    \end{align*}
    We call $E_n$ and $D_n$ sampling encoders and sampling decoders, respectively.
\end{theorem}

\begin{proof}
    First, we mention that all $P_{\frac{1}{n}, i}$ are continuous which means that $\T_n$ indeed maps into $\CC(\Omega, \K^a)$.
    Let $f\in \CC(\Omega, \K^a)$ and $\varepsilon>0$. As $\Omega$ is compact, $f$ is uniformly continuous due to the Heine-Cantor theorem. Hence, there exists some $N \in \N$ such that for all $n\geq N$ it holds that
    \begin{align}\label{eq:thm_sampling_UAP_1}
        \vert f(x) - f(y) \vert \leq \varepsilon \textup{ \, whenever \, } x,y\in \Omega \textup{ \, with \, } d(x,y) < \frac{1}{n}.
    \end{align}
    Given $y\in \Omega$ let $I(y,n)$ denote the set of all indices $i$ such that $d(y\, , y_i^{(n)}) < 1/n$.
    Due to the properties of $P_{\frac{1}{n}, i}$, see \Cref{lem:partition_unity}, we conclude for $n\geq N$ that
    \begin{align*}
        \Big\vert f(y) - (\T_n f)(y) \Big\vert &= \left\vert  \sum_{i\in I(y,n)} \left( f(y)- f(y_i^{(n)})\right) P_{\frac{1}{n}, i}(y)\right\vert\\
        &\leq \sum_{i\in I(y,n)} \left \vert  f(y)- f(y_i^{(n)})\right\vert  P_{\frac{1}{n}, i}(y) \\
        &\overset{\labelcref{eq:thm_sampling_UAP_1}}{\leq} \varepsilon \sum_{i\in I(y,n)} P_{\frac{1}{n}, i}(y)\leq \varepsilon \sum_{i=1}^{k(n)} P_{\frac{1}{n}, i}(y) = \varepsilon.
    \end{align*}
    Taking the supremum over all $y\in \Omega$ shows that $\T_n f$ converges to $f$. By \Cref{lem:BAP_pointwise_convergence_suffices}, this implies that $\T_n$ uniformly converges to the identity operator on every compact $K\subseteq \CC(\Omega, \K^a)$. The range space of $T_n$ is spanned by the vectors $b_{i,j} = e_j P_i$, where $\{e_j: j = 1,\dots,a\}$ is the standard basis of $\K^a.$ Hence, $T_n$ has finite rank, namely at most $k(n) a$.
\end{proof}

\begin{remark}
    In \cite[Lemma 7]{Chen1995_approx_thm}, a different partition of unity was used by defining
    \begin{align*}
        \Tilde{P}_{\varepsilon, i}:\Omega &\longrightarrow [0,1] \\
        y&\longmapsto \begin{cases}
            1- \varepsilon^{-1}d(y, y_i)\textup{ \, if \, } d(y, y_i) < \varepsilon, \\
            0 \textup{ \, else.}
        \end{cases}
    \end{align*}
    The advantage of using the $\Tilde{P}_{\varepsilon, i}$ as in the proof of \Cref{lem:partition_unity} is their smoothness if $\Omega\subset \R^p$, see \Cref{coro:sampling_BAP}.
\end{remark}

\begin{corollary}\label{coro:sampling_BAP}
    For compact $\Omega\subset \R^p$, the mappings $T_n$ in \Cref{thm:sampling_BAP} are also well-defined as mappings $\X\to \X$ for $\X$ being $\CC^k(\Omega, \R^a)$ or a Sobolev space $W^{k,2}(\Omega,\R)$ that continuously embeds into $\CC(\Omega, \R)$. In either case, $T_n$ uniformly converges to the identity on $\X$ on every compact $K\subset \X$, where $\X$ is equipped with the supremum norm. 
\end{corollary}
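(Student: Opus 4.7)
The plan is to reduce the corollary to Theorem~\ref{thm:sampling_BAP} by exploiting (a) the smoothness of the partition of unity and (b) the continuous embedding of $\X$ into $\CC(\Omega,\R^a)$.

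First, I would verify that $T_n$ is well-defined as a mapping $\X\to\X$. For $f\in\X$, the values $f(y_i^{(n)})$ make sense because in either case $\X$ continuously embeds into $\CC(\Omega,\R^a)$ (trivially for $\CC^k(\Omega,\R^a)$ equipped with its natural norm, and by hypothesis for $W^{k,2}(\Omega,\R)$). By Lemma~\ref{lem:partition_unity}, the $P_{1/n,i}$ can be chosen in $\CC^\infty(\Omega,\R)$, so the finite linear combination
\[
T_n(f) \;=\; \sum_{i=1}^{k(n)} f(y_i^{(n)})\, P_{\tfrac{1}{n},i}
\]
is smooth on the compact set $\Omega$. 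Hence $T_n(f)\in\CC^\infty(\Omega,\R^a)\subseteq \CC^k(\Omega,\R^a)$; and for $\Omega$ compact, smooth functions lie in every $W^{k,2}(\Omega,\R)$, so $T_n(f)\in W^{k,2}(\Omega,\R)$ as well.

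Next, for the uniform convergence on compact sets, let $K\subset\X$ be compact. Denote by $\iota\colon \X\hookrightarrow \CC(\Omega,\R^a)$ the continuous embedding. Since continuous maps send compact sets to compact sets, $\iota(K)$ is a compact subset of $\CC(\Omega,\R^a)$ (with the supremum norm). The mappings $T_n$ from Theorem~\ref{thm:sampling_BAP} act on $\CC(\Omega,\R^a)$ exactly by the same formula as on $\X$, so for every $f\in K$,
\[
\sup_{y\in\Omega} \bigl| f(y) - (T_n f)(y)\bigr|
\;=\; \bigl\| \iota(f) - T_n(\iota(f)) \bigr\|_{\CC(\Omega,\R^a)}.
\]
Applying Theorem~\ref{thm:sampling_BAP} to the compact set $\iota(K)\subset \CC(\Omega,\R^a)$ yields
\[
\sup_{f\in K}\, \sup_{y\in\Omega}\, \bigl|f(y) - (T_n f)(y)\bigr|
\;=\; \sup_{g\in \iota(K)} \bigl\| g - T_n g\bigr\|_{\CC(\Omega,\R^a)}
\;\xrightarrow{n\to\infty}\; 0,
\]
which is the desired uniform convergence in the supremum norm.

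I do not expect a genuine obstacle here: the only point requiring a little care is the dual role of $\X$ and $\CC(\Omega,\R^a)$ — compactness of $K$ is meant in the native topology of $\X$, while convergence is stated in the weaker supremum norm. The continuous embedding bridges the two, so Theorem~\ref{thm:sampling_BAP} applies verbatim after composition with $\iota$. Writing the argument this way also makes it transparent that no extra assumption beyond continuity of the embedding $\X\hookrightarrow \CC(\Omega,\R^a)$ is needed.
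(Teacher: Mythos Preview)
Your proof is correct and matches the paper's approach: the paper does not give an explicit proof but simply states that the corollary follows immediately from the smoothness of the partition of unity, and you have spelled out exactly the two ingredients this entails --- well-definedness of $T_n$ as a map $\X\to\X$ via $P_{1/n,i}\in\CC^\infty(\Omega,\R)$, and reduction of the convergence statement to Theorem~\ref{thm:sampling_BAP} via the continuous embedding $\iota\colon\X\hookrightarrow\CC(\Omega,\R^a)$. Your remark about the two topologies on $\X$ is a useful clarification that the paper leaves implicit.
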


Note that the spaces in \Cref{coro:sampling_BAP} have the BAP, but are not complete.
When $\CC^k$ is equipped with the $\CC^k$-norm, instead of the supremum norm, one can also show that it has the BAP with suitable sampling encoders. We give an impression of how to construct suitable $T_n$ in Appendix \ref{app:sampling_Ck}.

Finally, we provide in Appendix \ref{sec:appendix_coeff_sampling} a comparison of the above sampling encoders and the basis encoders discussed in Section \ref{sec:Tn_schauder}. In \Cref{thm:sampling_schauder_differ_main} we show that, in general, Schauder basis encoders cannot coincide with sampling encoders for $n$ large enough.

\subsubsection{Substitution by dense sets}\label{sec:Tn_substitute_dense_set}
Throughout this section, let $\X$ be a separable normed space having the BAP, with suitable mappings $T_n$ from \Cref{def:BAP}. As $T_n$ maps into a finite dimensional space, we can choose a basis $(b_i^{(n)})_{i\in I_n}$ for $T_n(\X)$, with $I_n$ being a finite index set. Therefore, we can write
\begin{align}\label{eq:BAP_range_basis_representation}
    T_n f = \sum_{i\in I_n} c^{(n)}_i(T_n f) b_i^{(n)},
\end{align}
where $c_i^{(n)}:T_n(\X)\to \K$ are linear and continuous functionals.
For some dense set $S\subseteq \X$, we will see that one can also replace the basis $b_i^{(n)}$ of $T_n(\X)$ by some vectors from $S$, without loosing the ability to approximate the identity uniformly on compact sets. The decoders below will be useful in Section \ref{sec:examples_architectures} when discussing classical DeepONets or MIONets.

\begin{theorem}\label{thm:Tn_substitute_dense_set}
     Let $S\subseteq \X$ be dense. Then for every $n\in \N$ with $I_n$ being the finite index set in \Cref{eq:BAP_range_basis_representation}, there exist $(v_i^{(n)})_{i\in I_n}\subset S$ such that the mappings $\tilde{T}_n = D_n\circ E_n$ defined by
    \begin{align*}
        E_n: \X &\longrightarrow \K^{\vert I_n\vert}
        & \hspace{-1cm}
        D_n: \K^{\vert I_n \vert} &\longrightarrow \X \\
        f &\longmapsto \Big(c^{(n)}_1(T_n (f)),\dots, c^{(n)}_{\vert I_n\vert}(T_n (f))\Big)^\intercal,
        & \hspace{-1cm}
        \mu &\longmapsto \sum_{i=1}^{\vert I_n \vert} \mu_i v_i^{(n)},
    \end{align*}
    converge uniformly to the identity on every compact $K\subseteq \X$. Hence, $\tilde{T}_n$ are suitable choices in \Cref{def:BAP} of the BAP. We call $E_n$ and $D_n$ auxiliary encoders and dense decoders, respectively.
\end{theorem}
\begin{proof}
Since the $c_i^{(n)}$ are bounded, linear functionals on $T_n(\X)$, there exists some $p_n>0$ such that
\begin{align*}
    \sum_{i\in I_n} \vert c_i^{(n)}(T_n f) \vert \leq p_n \Vert T_n f \Vert \leq p_n \Vert T_n \Vert \Vert f\Vert,
\end{align*}
where $\Vert T_n\Vert$ denotes the operator norm of $T_n$. Since $S$ in dense in $\X$, there are $v_i^{(n)}\in S$ such that \begin{align*}
    \Vert b_i^{(n)} - v_i^{(n)} \Vert < \frac{1}{np_n \Vert T_n \Vert}.
\end{align*} 
 Further, for any $f\in \X$ it follows that 
\begin{align*}
    \Vert f - \tilde{T}_n f \Vert &\leq \Vert f - T_n f\Vert + \Vert T_n f - \tilde{T}_n f\Vert \\
    &< \Vert f - T_n f\Vert + \frac{\Vert f \Vert}{n}.
\end{align*}
Therefore, $\tilde{T}_n$ uniformly converges to the identity on every compact set $K\subseteq \X$, since $T_n$ has this property and since compact sets are bounded.
\end{proof}

If $\X$ is a Hilbert space, it is possible to construct an encoder that also uses $v_i^{(n)}$. This will be of interest in \Cref{sec:basisONet} when discussing BasisONets \cite{Hua2023}.
\begin{theorem}\label{thm:Tn_substitute_dense_set_HilbertSpace}
    Assume that $\X$ is a separable Hilbert space and $S\subseteq \X$ be dense. Then for every $n\in \N$ there are $v_1^{(n)},...,v_n^{(n)}\in S$ such that the mappings $\tilde{T}_n = D_n\circ E_n$ defined by
    \begin{align*}
        E_n: \X &\longrightarrow \K^{n}
        & \hspace{-1cm}
        D_n: \K^{n} &\longrightarrow \X\\
        f &\longmapsto \Big(\langle f, v_1^{(n)}\rangle,\dots,\langle f, v_{n}^{(n)}\rangle\Big)^\intercal,
        & \hspace{-1cm}
        \mu &\longmapsto \sum_{i=1}^{n} \mu_i v_i^{(n)},
    \end{align*}
    converge uniformly to the identity on every compact $K\subseteq \X$. Hence, $\tilde{T}_n$ are suitable choices in \Cref{def:BAP} of the BAP. We call $E_n$ dense encoders. Note that $D_n$ are dense decoders as in \Cref{thm:Tn_substitute_dense_set}
\end{theorem}
\begin{proof}
    Choose an orthonormal basis $(b_i)_{i\in \N}$ of $\X$. Then the mappings $T_n$ constructed in \Cref{sec:Tn_schauder} are given by
    \begin{align*}
        T_n:\X &\longrightarrow \X\\
        f &\longmapsto \sum_{i=1}^n \langle f, b_i\rangle b_i.
    \end{align*} 
    For any $n\in \N$, choose $v_1^{(n)},...,v_n^{(n)}\in S$ such that
    \begin{align*}
        \sum_{i=1}^n\Vert v_i^{(n)} - b_i\Vert \leq\frac{1}{3n}.
    \end{align*}
    Note that $\Vert v_i^{(n)}\Vert \leq 1 + \Vert v_i^{(n)} - b_i \Vert$. For all $f\in \X$, it follows from the triangular and Cauchy-Schwartz inequality that
    \begin{align*}
        \Vert \tilde{T}_n f - T_n f \Vert 
        &\leq \left \Vert \sum_{i=1}^n \langle f, v_i^{(n)}\rangle v_i^{(n)} - \langle f, v_i^{(n)}\rangle b_i \right \Vert + \left \Vert \sum_{i=1}^n \langle f, v_i^{(n)}\rangle b_i - \langle f, b_i\rangle b_i \right \Vert \\
        &\leq \Vert f \Vert \sum_{i=1}^n \Vert v_i^{(n)} \Vert  \Vert v_i^{(n)} - b_i \Vert + \Vert f\Vert\sum_{i=1}^n \Vert v_i^{(n)} - b_i \Vert \\
        &\leq \Vert f \Vert \sum_{i=1}^n (2+\Vert v_i^{(n)} - b_i \Vert) \Vert v_i^{(n)} - b_i \Vert \\
        &\leq 3\Vert f \Vert \sum_{i=1}^n \Vert v_i^{(n)} - b_i \Vert \leq \frac{\Vert f \Vert}{n}.
    \end{align*}
    Hence, the claim follows from $\Vert f - \tilde{T}_n f \Vert \leq \Vert f - T_n f\Vert + \Vert T_n f - \tilde{T}_n f\Vert$ with the same arguments as in the previous theorem.
\end{proof}
\begin{remark}
    Instead of an orthonormal basis $(b_i)_{i\in \N}$, one can also consider a frame $(f_i)_{i\in \N}$ along with a dual frame $(f_i^*)_{i\in \N}$ in \Cref{thm:Tn_substitute_dense_set_HilbertSpace}. For the decoder, choose $v_i^{(n)}\in S$ that approximates $f_i$. For the encoder, choose $w_i^{(n)}$ that approximates $f_i^*$. 
\end{remark}

\subsection{Examples of metric spaces}\label{sec:metric_EDAP_spaces}
In this section, we discuss several well-known metric spaces that are neither normed spaces nor even topological vector spaces, yet possess the input- or output-EDAP. These examples demonstrate the advantage of formulating the EDAPs in the general setting of metric spaces.
Classical approximation properties such as the AP and BAP are based on approximation by finite-rank linear operators and are therefore naturally associated with Banach spaces or more generally locally convex topological vector spaces \cite{Grothendieck, Koethe_TopVecSpaces, Casazza2001}.
Furthermore, we give examples of discontinuous, though natural, encoders that are nevertheless suitable for the input-EDAP. In addition, we provide examples of uniformly continuous but non-Lipschitz decoders that are suitable for the output-EDAP. Note that some examples were already given in \cref{rem:schauder_basis_frechet_EDAP}.

\subsubsection{Wasserstein spaces}\label{sec:wasserstein}
Throughout this section, $(\Omega, d)$ denotes a complete, separable metric space. 
For $p\geq 1$, denote by $\PP_p(\Omega;d)$ the set of Borel probability measures on $\Omega$ with finite moments of order $p$. So $\mu\in \PP_p(\Omega;d)$ if for some $x_0\in \Omega$ (hence for any $x_0$) it is
\begin{align*}
    \int_\Omega d(x, x_0)^p \diff\mu(x) < \infty.
\end{align*}
Note that if $d$ is a bounded metric, $\PP_p(\Omega;d)$ coincides with the set $\PP(\Omega)$ of all probability measures.
The main goal is to show in \Cref{thm:Wasserstein_DEDAP} that if $\Omega$ is proper,  $\PP_p(\Omega;d)$ has the input-EDAP when being equipped with the $p$-Wasserstein distance. In \Cref{coro:wasserstein_comp_support}, we show that the set of compactly supported measures also has the input-EDAP. If $\Omega$ is compact, also the output-EDAP is fulfilled, see \Cref{thm:wasserstein_compact_omega}. Further, we verify both EDAPs for the set of absolutely continuous measures in $\PP_p(\R^m; \vert \cdot \vert)$ in \Cref{coro:wasserstein_abs_cont_EDAP}.
For non-compact $\Omega$, it remains an open question whether $\PP_p(\Omega; d)$ also has the output-EDAP.

\begin{definition}[Proper metric space]
    A metric space $(\Omega, d)$ is called proper if for every $R>0$ and $x\in \Omega$, the closed ball $\overline{B}_R(x_0)=\{x\in \Omega: d(x, x_0)\leq R\}$ is compact.
\end{definition}
For example, $\Omega=\R^m$ or any of its closed subsets are proper, equipped with any norm. Note that every proper metric space is complete and separable.
We consider the $p$-Wasserstein metric, also called Kantorovich-Rubinstein distance, where we follow \cite[Section 7.1]{Villani_Optimal_Transport}. 
\begin{definition}[$p$-Wasserstein distance]
    For $p\geq 1$, the $p$-Wasserstein (or Kantorovich-Rubinstein) distance between probability measures $\mu, \nu\in \PP_p(\Omega; d)$ is defined as
    \begin{align*}
        W_p(\mu, \nu) \coloneqq \left(\inf_{\pi\in \Pi(\mu, \nu)} \int_{\Omega\times \Omega} d(x, y)^p \diff\pi(x, y)\right)^{1/p},
    \end{align*}
    where $\Pi(\mu, \nu)$ is the set of Borel probability measures on $\Omega\times \Omega$ such that $\pi(A\times \Omega) = \mu(A)$ and $\pi(\Omega\times A)=\nu(A)$ for every Borel set $A\subseteq\Omega$.
\end{definition}
\begin{remark}\label{rem:wasserstein_W1_dual}
    Although the definition of $W_p$ depends on the choice of the metric $d$ on $\Omega$, we omit this fact within the notation for the sake of readability. Furthermore, $W_p$ indeed defines a metric on $\PP_p(\Omega;d)$, see e.g.\ \cite[Theorem 7.3]{Villani_Optimal_Transport}. In case of $p=1$, the $1$-Wasserstein distance can be expressed as
    \begin{align*}
        W_1(\mu, \nu) = \sup_{f \in \Lip_1(\Omega)} \int_\Omega f \diff(\mu - \nu),
    \end{align*}
    where $\Lip_1(\Omega)$ is the set of $1$-Lipschitz functions $f:\Omega\to \R$, see \cite[Remark 7.5]{Villani_Optimal_Transport}.
\end{remark}
In order to show that $(\PP_p(\Omega; d), W_p)$ has the input-EDAP, we construct suitable encoders and decoders in \Cref{def:wasserstein_encoder_decoder}. We begin with the following elementary result.

\begin{lemma}\label{lem:cover_omega_I}
    Let $(\Omega, d)$ be a proper metric space. Fix $x^*\in \Omega$. For any $a>0$ and $r>0$, there exist finitely many disjoint, non-empty Borel sets $I_1^{(a,r)},...,I_{k(a,r)}^{(a,r)}\subseteq \overline{B}_r(x^*)$ with diameter $<a$ and which cover $\overline{B}_r(x^*)$.
\end{lemma}
\begin{proof}
    This follows from the assumption on $\Omega$ that all closed balls $\overline{B}_r(x^*)$ are compact.
\end{proof}

\begin{definition}\label{def:wasserstein_encoder_decoder}
    Let $(\Omega, d)$ be a proper metric space. For fixed $x^*\in \Omega$, $a,r>0$ and the corresponding sets $I_i^{(a,r)}$ from \Cref{lem:cover_omega_I}, we define encoders
    \begin{equation*}
        \begin{aligned}
            E_{a,r}:\PP_p(\Omega;d) &\longrightarrow A_{a,r}\coloneqq \{c\in \R^{k(a,r)}: c_i\geq 0,\ \sum_i c_i \leq 1\} \\
            \mu &\longmapsto \big(\mu(I^{(a,r)}_1),..., \mu(I^{(a,r)}_{k(a,r)})\big)^T
        \end{aligned}
    \end{equation*}
    and, after choosing $x_1^{(a,r)} \in I^{(a,r)}_1,..., x_{k(a,r)}^{(a,r)}\in I^{(a,r)}_{k(a,r)}$, decoders via
    \begin{equation*}
        \begin{aligned}
            D_{a,r}: A_{a,r} &\longrightarrow \PP_p(\Omega;d)\\
            c &\longmapsto \sum_{i=1}^{k(a,r)} c_i \delta_{x_i^{(a,r)}} + \left( 1 - \sum_{i=1}^{k(a,r)} c_i \right) \delta_{x^*}.
        \end{aligned}
    \end{equation*}
\end{definition}

Note that $E_{a,r}$ is not continuous with respect to the $p$-Wasserstein metric $W_p$, simply because $W_p$-convergence does not necessarily imply strong convergence of probability measures. Furthermore, note that $A_{a,r}\subset \R^{k(a,r)}$ is closed and bounded, hence compact.

\begin{lemma}\label{lem:wasserstein_equivalence}
    Assume the metric $d$ on $\Omega$ is bounded. Then for any $p\geq1$ and all $\mu, \nu\in \PP(\Omega)=\PP_p(\Omega;d)= \PP_1(\Omega;d)$ it is
    \begin{align*}
        W_1(\mu, \nu) \leq W_p(\mu, \nu) \leq \sup\{d(x,y):x,y\in \Omega\}^{1-1/p} W_1(\mu, \nu)^{1/p}.
    \end{align*}
    In other words, all $p$-Wasserstein distances are equivalent. 
\end{lemma}
\begin{proof}
    This follows from applying the Hölder inequality.
\end{proof}
The following two results are extracted from the proof in \cite[Theorem 7.12]{Villani_Optimal_Transport}. For $R>0$, denote by $W^{(R)}_p(\mu, \nu)$ the $p$-Wasserstein distance between $\mu, \nu\in \PP_p(\Omega;d_R)$ w.r.t. to the bounded metric $d_R(x,y)\coloneqq \min\{d(x,y), R\}$ on $\Omega$.
\begin{lemma}\label{lem:metric_p_estimate}
    For $p\geq 1$, there exists $c'_p>0$ such that for every $R>0$ and $x,y,x_0\in \Omega$ it is
    \begin{align*}
        d(x, y)^p \leq c_p'\Big(d_R(x,y)^p
        &+ d(x, x_0)^p \mathbbm{1}_{d(x, x_0)\geq R/2}
        \\
        &+ d(x_0, y)^p \mathbbm{1}_{d(x_0, y)\geq R/2}\Big).
    \end{align*}
\end{lemma}
\begin{proof}
    Let $R>0$ and $x,y,x_0\in \Omega$. Due to the triangular inequality it is either $d(x, y) \leq 2 d(x, x_0)$ or $d(x,y)\leq 2 d(x_0, y)$. Therefore, it is
    \begin{align*}
        d(x, y)&\leq \min\{d(x,y), R\} + 2d(x, x_0) \mathbbm{1}_{d(x, x_0)\geq R/2} + 2d(x_0, y)\mathbbm{1}_{d(x_0, y)\geq R/2}.
    \end{align*}
    The claim then follows from applying Jensen's inequality to the convex mapping $a\mapsto a^p$ for $a\geq 0$.
\end{proof}

\begin{corollary}\label{coro:bounded_wasserstein}
    For every $p\geq 1$, there exists $c'_p>0$ such that for every $R>0$, $x_0\in \Omega$ and every $\mu, \nu\in \PP_p(\Omega; d)$ it is
    \begin{align*}
        W_p(\mu, \nu)^p \leq c'_p \left( R^{p-1}W^{(R)}_1(\mu, \nu)  + \int_{d(x, x_0) \geq R/2} d(x, x_0)^p \diff(\mu+\nu)\right).
    \end{align*}
\end{corollary}
\begin{proof}
    Let $R>0$ and $\mu, \nu\in \PP_p(\Omega;d)$. According to in \cite[Theorem 1.3]{Villani_Optimal_Transport}, there exists an \emph{optimal transport plan} $\pi'\in \Pi(\mu, \nu)$ between $\mu$ and $\nu$ with respect to the cost function $d_R(x,y)^p=\min\{d(x,y), R\}^p$, i.e., 
    \begin{align*}
        W^{(R)}_p(\mu, \nu)^p = \int_{\Omega\times \Omega} d_R(x, y)^p\diff \pi'(x, y).
    \end{align*}
    Choose $c'_p>0$ as in \Cref{lem:metric_p_estimate}. It follows that
    \begin{align*}
        \frac{1}{c'_p}W_p(\mu, \nu)^p &\leq  \frac{1}{c'_p}\int_{\Omega\times \Omega} d(x, y)^p\diff \pi'(x, y) \\
        &\leq \int_{\Omega \times \Omega }d_R(x,y)^p\diff \pi'(x,y)+ \int_{d(x, x_0)\geq R/2} d(x, x_0)^p\diff \pi'(x, y) \\ & \phantom{as}+ \int_{d(x_0,y)\geq R/2} d(x_0, y)^p\diff \pi'(x, y) \\
        &= W^{(R)}_p(\mu, \nu)^p + \int_{d(x, x_0) \geq R/2} d(x, x_0)^p \diff \mu(x)  \\ & \phantom{as}+ \int_{d(x_0,y) \geq R/2} d(x_0, y)^p \diff \nu(y),
    \end{align*}
    where the latter follows by definition of $\Pi(\mu, \nu)\ni \pi'$. Hence, the claim follows from the symmetry of the metric $d$ and the estimate $W^{(R)}_p(\mu, \nu)\leq R^{1-1/p}W_1^{(R)}(\mu, \nu)^{1/p}$, pointed out in \Cref{lem:wasserstein_equivalence}.
\end{proof}

\Cref{coro:bounded_wasserstein} can be used to show Hölder continuity of the decoders $D_{a,r}$. Note that the assumption of $(\Omega,d)$ being proper in the next result is only relevant for the specific selection of points $x_i^{(a,r)}.$

\begin{corollary} \label{coro:decoder_HoelderContinuous_Wasserstein}
    Let $(\Omega, d)$ be a proper metric space. For every $p\geq 1$ and $a,r>0$, the decoder $D_{a,r}: A_{a,r} \to(\PP_p(\Omega;d), W_p)$ defined in \Cref{def:wasserstein_encoder_decoder} is Hölder continuous with exponent $1/p$. The domain $A_{a,r}$ can be equipped with any norm on $\R^{k(a,r)}$.
\end{corollary}
\begin{proof}
    For fixed $a, r>0$, one can choose $R>0$ large enough such that
    \begin{equation*}
        \max \{d(x_i^{(a,r)}, x^*): i=1,...,k(a,r)\}\leq r < R/2.
    \end{equation*}
    Hence, for all $c\in A_{a,r}$ it is
    \begin{align*}
        \int_{d(x, x^*) \geq R/2} d(x, x^*)^p \diff(D_{a,r}(c)) = 0.
    \end{align*}
    Therefore, and by using Corollary \ref{coro:bounded_wasserstein}, we have for arbitrary $b,c\in A_{a,r}$ 
    \begin{align*}
        W_p(D_{a,r}(b), D_{a,r}(c))^p &\leq c'_p R^{p-1}W^{(R)}_1(D_{a,r}(b), D_{a,r}(c))\\
        &\leq c'_p R^{p-1} W_1(D_{a,r}(b), D_{a,r}(c)) \\
        &=c'_p R^{p-1} \sup_{\substack{f\in \Lip_1(\Omega), \\ f(x^*)=0}} \int_\Omega f \diff(D_{a,r}(b)-D_{a,r}(c))\\
        &=c'_p R^{p-1} \sup_{\substack{f\in \Lip_1(\Omega), \\ f(x^*)=0}} \sum_{i=1}^{k(a,r)} (b_i-c_i) f(x_i^{(a,r)})\\
        &= c'_p R^{p-1} \sup_{\substack{f\in \Lip_1(\Omega), \\ f(x^*)=0}} \sum_{i=1}^{k(a,r)} \vert b_i-c_i\vert \cdot \vert f(x_i^{(a,r)})- f(x^*)\vert\\
        &\leq c'_p R^{p-1} \sum_{i=1}^{k(a,r)} \vert b_i-c_i\vert \cdot d(x_i^{(a,r)}, x^*)\\
        &\leq c'_p R^{p-1} r \vert b-c \vert_{1}
    \end{align*}
    leading to the desired Hölder continuity with exponent $1/p$.
\end{proof}

Furthermore, we need a characterization of compact sets in $(\PP_p(\Omega), W_p)$. To do so, we follow \cite[Proposition 7.1.5]{Ambrosio2005} and adjust the result to proper metric spaces.
\begin{proposition}\label{prop:wasserstein_compact}
    Let $(\Omega, d)$ be a proper metric space and $p\geq 1$. A set $K\subseteq \PP_p(\Omega;d)$ is relatively compact with respect to the $p$-Wasserstein distance $W_p$ if and only if it has uniformly integrable $p$-moments, i.e.\ for any $x_0\in \Omega$, it holds that
    \begin{equation}\label{eq:wasserstein_compact}
        \lim_{R\to\infty} \sup_{\mu\in K}\int_{d(x_0, x)\geq R} d(x_0, x)^p\diff\mu(x) = 0.
    \end{equation}
\end{proposition}
\begin{proof}
    The fact that every relatively compact set $K\subseteq \PP_p(\Omega;d)$ has uniformly integrable $p$-moments follows directly from the result in \cite[Proposition 7.1.5]{Ambrosio2005}.\\
    For the opposite direction, an additional tightness condition is needed \cite[Theorem 5.1.3]{Ambrosio2005}: A set $K\subseteq \PP(\Omega)$ is called \textit{tight}, if for all $\varepsilon>0$, there exists a compact set $\Omega_\varepsilon$ in $\Omega$ such that $\mu(\Omega \setminus \Omega_\varepsilon)\leq \varepsilon$ for all $\mu\in K.$ However, we will show in the following that for a given subset $K\subseteq \PP_p(\Omega;d)$, the condition in \Cref{eq:wasserstein_compact} together with the properness of the metric space $(\Omega, d)$ already yields tightness of $K.$\newline
    Consider any $K\subseteq \PP_p(\Omega;d)$ which has uniformly integrable $p$-moments. Let $\varepsilon>0$ and $x_0\in \Omega.$ Due to the condition in \eqref{eq:wasserstein_compact}, one can choose $R>0$ sufficiently large, so that
    \begin{equation*}
        \int_{d(x_0, x)\geq R} d(x_0, x)^p\diff\mu(x) < \varepsilon R^p \quad \text{for all}\ \mu \in K.
    \end{equation*}
    Hence, for all $\mu\in K$, we have the estimate
    \begin{align*}
        \mu(d(x_0, x)> R) \leq \int_{d(x_0, x)\geq R}1 \diff\mu \leq \int_{d(x_0, x)\geq R} \left( \frac{d(x_0, x)}{R} \right)^p \diff \mu(x) < \varepsilon.
    \end{align*}
    With $\mu(d(x_0, x)> R) = \mu(\Omega \setminus \overline{B}_R(x_0))$ and the fact that closed balls are compact due to the properness of $\Omega$, it follows that $K$ is indeed tight.
\end{proof}

\begin{theorem}\label{thm:Wasserstein_DEDAP}
    Let $(\Omega, d)$ be a proper metric space and $p \geq 1$. Then $\PP_p(\Omega;d)$ equipped with the metric $W_p$ has the input-EDAP.
\end{theorem}

\begin{proof}
    Choose sequences $(r_n)_{n\in \N}$ and $(a_n)_{n\in \N}$ of positive real numbers such that 
    \begin{align*}
        r_n \xrightarrow{n\to\infty} \infty \hspace{-7ex} &&\text{and}\hspace{-7ex} &&a_n r_n^{p-1} \xrightarrow{n\to\infty} 0.
    \end{align*}
    Furthermore, recall the encoders $E_n \coloneqq E_{a_n,r_n}$ and decoders $D_n\coloneqq D_{a_n,r_n}$ as well as the fixed $x^*\in \Omega$ from \Cref{def:wasserstein_encoder_decoder}.
     For any $\mu\in \PP_p(\Omega; d)$ we use the notation $\mu_n\coloneqq D_n\circ E_n (\mu)$ and
    \begin{align*}
        \varepsilon_n(\mu)\coloneqq \int_{d(x, x^*)> r_n}d(x, x^*)^p \diff\mu.
    \end{align*}
    Note that
    \begin{align*}
        \mu(d(x, x^*)> r_n) = \int_{d(x, x^*)> r_n}1 \diff\mu < \int_{d(x, x^*)> r_n}\left(\frac{d(x, x^*)}{r_n}\right)^p \diff\mu = \frac{\varepsilon_n(\mu)}{r_n^p}. 
    \end{align*}
    By choosing $R=3r_n$ in Corollary \ref{coro:bounded_wasserstein}, we obtain for any $\mu\in \PP_p(\Omega; d)$ that
    \begin{align*}
        W_p(\mu,\mu_n)^p &\leq c'_p \left( (3r_n)^{p-1} W_1^{(3r_n)}(\mu,\mu_n) + \int_{d(x, x^*)\geq 1.5r_n} d(x, x^*)^p \diff(\mu+\mu_n)(x) \right)\\
        &\leq c'_p \left( (3r_n)^{p-1}W_1^{(3r_n)}(\mu,\mu_n) + \varepsilon_n(\mu) \right),
    \end{align*}
    where the latter is due to the fact that, by definition, $\mu_n(d(x, x^*)>r_n) = 0.$
    For any $f:\Omega\to \R$ with $f(x^*)=0$ and which is $1$-Lipschitz with respect to the bounded metric $d^{(3r_n)}(x,y)=\min\{d(x,y), 3r_n\}$, we observe for every $\mu\in \PP_p(\Omega; d)$ that
    \begin{align*}
        \left \vert \int_{\Omega}f \diff(\mu - \mu_n) \right \vert &\leq \left \vert \int_{d(x, x^*)\leq r_n}f \diff(\mu - \mu_n) \right \vert + \left \vert \int_{d(x, x^*)> r_n}f \diff\mu \right \vert\\
        &\leq \scalebox{0.86}{$\displaystyle \left \vert \sum_{i=1}^{k(a_n,r_n)} \int_{I_i^{(a_n,r_n)}}f(x) - f(x_i^{(a_n,r_n)}) \diff\mu(x) \right \vert + \int_{d(x, x^*)> r_n}\vert f(x) - f(x^*)\vert \diff\mu(x)$}\\
        &\leq \scalebox{0.97}{$\displaystyle \sum_{i=1}^{k(a_n,r_n)} \int_{I_i^{(a_n,r_n)}} d(x, x_i^{(a_n,r_n)})\diff\mu(x) + \int_{d(x, x^*)> r_n}d^{(3r_n)}(x, x^*)\diff \mu(x)$}\\
        &\leq \max \{\textup{diam}(I_i^{(a_n,r_n)}):i=1,...,k(a_n,r_n)\} + 3r_n\mu(d(x, x^*)>r_n)\\
        &\leq a_n + 3\varepsilon_n(\mu) r_n^{1-p}.
    \end{align*}
    Hence, for any $\mu\in \PP_p(\Omega; d)$ it is
    \begin{align*}
        W_1^{(3r_n)}(\mu, \mu_n) \leq a_n + 3\varepsilon_n(\mu) r_n^{1-p},
    \end{align*}
    which implies that
    \begin{align*}
        W_p(\mu, \mu_n)^p \leq c'_p (3r_n)^{p-1} a_n + c'_p\varepsilon_n(\mu)(3^p+1).
    \end{align*}
    By definition of $a_n$, we have that $c'_p (3r_n)^{p-1} a_n$ converges to zero independently on $\mu$. For any compact $K\subseteq \PP_p(\Omega;d)$ with respect to the $W_p$ distance, we have that $\sup_{\mu\in K}\varepsilon_n(\mu) \xrightarrow{n\to\infty} 0$ due to $r_n\xrightarrow{n\to \infty}\infty$ and \cref{prop:wasserstein_compact}.
    Hence, for any compact $K\subseteq \PP_p(\Omega; d)$, we have that
    \begin{align*}
        \sup_{\mu\in K} W_p(\mu, D_n\circ E_n (\mu))= \sup_{\mu\in K} W_p(\mu, \mu_n) \xrightarrow{n\to\infty} 0,
    \end{align*}
    which shows property (iii) in Definition \ref{def:input_EDAP} of the input-EDAP. Property (ii) is covered by Corollary \ref{coro:decoder_HoelderContinuous_Wasserstein} and (i) is due to the fact, that
    \begin{equation*}
        \sup_{\mu\in \PP_p(\Omega; d)} \vert E_n(\mu)\vert_{1} \leq 1.
    \end{equation*}
    Hence, $\PP_p(\Omega; d)$ equipped with the metric $W_p$ has the input-EDAP.
\end{proof}

Below, we observe the input-EDAP for two common subsets of $\PP_p(\Omega;d)$. The first result is an immediate consequence of \Cref{lem:subspace_EDAP} and \Cref{thm:Wasserstein_DEDAP} as the decoders in \Cref{thm:Wasserstein_DEDAP} map to compactly supported measures.
\begin{corollary}\label{coro:wasserstein_comp_support}
    Let $(\Omega, d)$ be a proper metric space and let $p\geq 1$. The set of measures in $\PP(\Omega)$ with compact support has the input-EDAP, when being equipped with any $p$-Wasserstein distance.
\end{corollary}

Note that in the next result, one could also consider absolutely continuous measures in $\PP_p(\Omega;\vert \cdot \vert)$ for certain closed subsets $\Omega\subset \R^m$, which we avoid for better readability.
\begin{corollary}\label{coro:wasserstein_abs_cont_EDAP} 
    Let $p\geq 1$. Consider the set $\PP^{ac}_p$ of measures $\mu\in \PP_p(\R^m;\vert \cdot \vert)$ that are absolutely continuous w.r.t. the $m$-dimensional Lebesgue measure $\lambda$. Then $(\PP^{ac}_p, W_p)$ has both the input- and output-EDAP.
\end{corollary}
\begin{proof}
    Recall the sequence of encoders $E_n$ and decoders $D_n$ from \Cref{thm:Wasserstein_DEDAP}. We cannot deduce the input-EDAP of $\PP^{ac}_p$ from \Cref{lem:subspace_EDAP} because $D_n$ does not map into $\PP^{ac}_p$. Nevertheless, we can construct different decoders. Consider a null sequence $\varepsilon_n>0$ which satisfies that 
    \begin{align*}
        \varepsilon_nr_n^{p-1} \xrightarrow{n\to\infty}  0 \textup{ \, \, \, and \, \,  \,} B_{\varepsilon_n}(x_i^{(a_n, r_n)}) \subset B_{1.1 r_n}(x^*) \textup{\, for all }i.
    \end{align*}
    For $y\in \R^m$, consider $\mu^{(n)}_y\in \PP^{ac}_p$ which has the density $g_y^{(n)}$ with respect to $\lambda$, where
    \begin{align*}
        g_y^{(n)}(z)\coloneqq \begin{cases}
            \lambda(B_{\varepsilon_n}(y))^{-1} \textup{\, if \,} z \in B_{\varepsilon_n}(y),\\
            0, \textup{\, else.}
        \end{cases}
    \end{align*}
    Recall $A_{a_n, r_n} = \{ c\in \R^{k(a_n, r_n)}: c_i\geq 0, \sum_i c_i \leq 1\}$, the domain of $D_n$, and define the new decoders via
    \begin{align*}
        \tilde{D}_n:A_{a_n, r_n} &\longrightarrow \PP^{ac}_p\\
        c &\longmapsto \sum_{i=1}^{k(a_n, r_n)} c_i \mu_{x_i^{(a_n, r_n)}}^{(n)} + \left(1 - \sum_{i=1}^{k(a_n, r_n)}c_i \right) \mu_{x^*}^{(n)}.
    \end{align*}
    Hölder continuity of $\tilde{D}_n$ can be derived similarly as for $D_n$ in \Cref{coro:decoder_HoelderContinuous_Wasserstein}.
    By applying \Cref{coro:bounded_wasserstein} with $R=3r_n$, and due to $B_{\varepsilon_n}(x_i^{(a_n, r_n)}) \subset B_{1.1 r_n}(x^*)$, it follows for every $c\in A_{a_n, r_n}$ that
    \begin{align*}
        W_p(D_n c, \tilde{D}_n c)^p \leq  c_p'(3 r_n)^{p-1} W_1(D_n c, \tilde{D}_n c) \leq c_p'(3 r_n)^{p-1} \varepsilon_n,
    \end{align*}
    where the last inequality is due to the fact that for every $1$-Lipschitz $f:\R^m\to \R$ and $y\in \R^m$, it is
    \begin{align*}
        \left \vert \int_{\R^m} f \diff (\delta_y-\mu^{(n)}_y)\right \vert &\leq \lambda(B_{\varepsilon_n}(y))^{-1} \int_{B_{\varepsilon_n}(y)}\vert f(y) - f(x) \vert\diff \lambda(x)\\
        &\leq \lambda(B_{\varepsilon_n}(y))^{-1} \int_{B_{\varepsilon_n}(y)}\vert y - x \vert\diff \lambda(x) \leq \varepsilon_n.
    \end{align*}
    Therefore, for every $\mu\in \PP^{ac}_p$, it holds that
    \begin{align*}
        W_p(\mu, \tilde{D}_n \circ E_n (\mu))
        &\leq W_p(\mu, D_n \circ E_n (\mu)) + (c_p'(3 r_n)^{p-1} \varepsilon_n)^{1/p}.
    \end{align*}
    Hence, it follows from \Cref{thm:Wasserstein_DEDAP} and $\varepsilon_n r_n^{p-1}\xrightarrow{n\to\infty}0$ that $\PP_p^{ac}$ has the input-EDAP.\newline
    For the output-EDAP, we need to define decoders on $\R^{k(a_n, r_n)}$, see \Cref{def:output_EDAP}. Consider the Euclidean projection $\pi_n:\R^{k(a_n, r_n)}\to A_{a_n, r_n}$ onto the convex, closed set $A_{a_n, r_n}\subset  \R^{k(a_n, r_n)}$. We claim that $\tilde{D}_n\circ \pi_n$ and $E_n:\PP_p^{ac}\to A_{a_n, r_n}$ are suitable decoders and encoders for the output-EDAP of $\PP_p^{ac}$. Hölder-continuity of $\tilde{D}_n\circ \pi_n$ is preserved due to the Lipschitz continuity of $\pi_n$, which follows from \cite[Proposition 1.1.9]{Bertsekas_convex}. Note that $\tilde{D}_n\circ \pi_n \circ E_n= \tilde{D}_n \circ E_n$. Therefore, it only remains to prove that the encoders $E_n$ restricted to $\PP_p^{ac}$ are continuous. This follows from the Portmonteau theorem \cite[Theorem 2.1]{Billingsley_Probability_Measures}, since the $I_i^{(a_n, r_n)}$ can be constructed such that their boundaries have zero Lebesgue measure.
\end{proof}

Below, we show that for any compact metric space $(\Omega, d)$, the $p$-Wasserstein spaces also have the output-EDAP. 
Note that for compact $\Omega$, all $\PP_p(\Omega; d)$ coincide and $\PP(\Omega)\coloneqq \PP_p(\Omega;d)$ is compact with respect to every $p$-Wasserstein metric, see \cite[Remark 6.19]{Villani_OldNew}.

\begin{theorem}\label{thm:wasserstein_compact_omega}
    Let $\Omega$ be a compact metric space. Then for every $p\geq 1$, the $p$-Wasserstein space $(\PP(\Omega), W_p)$ has both the input- and output-EDAP.
\end{theorem}
\begin{proof}
    It only remains to show the output-EDAP. Let $P_{\frac{1}{n}, i}$ be the partition of unity from \Cref{lem:partition_unity} corresponding to a $\frac{1}{n}$-covering $x^{(n)}_1,...., x_{k(n)}^{(n)}$ of $\Omega$. 
    For some fixed $x^*\in \Omega$, consider the mappings
    \begin{align*}
        E_n:\PP(\Omega) &\longrightarrow \R^{k(n)} 
        & \hspace{-0.7cm}
        D_n:&\,\{c\in \R^{k(n)}: c_i\geq 0, \sum_ic_i \leq 1\} \longrightarrow \PP(\Omega) \\
        E_n(\mu)_i &=\int_\Omega P_{\frac{1}{n}, i}\diff\mu,
        & \hspace{-0.7cm}
        &c \longmapsto \sum_{i=1}^{k(n)} c_i \delta_{x_i^{(n)}} + \left( 1 - \sum_{i=1}^{k(n)} c_i\right)\delta_{x^*}.
    \end{align*}
    We show below that the encoders $E_n$ and decoders $D_n\circ\pi_n$ are suitable choices in \Cref{def:output_EDAP} of the output-EDAP, where $\pi_n$ denotes the Euclidean projection onto the closed, convex set $\{c\in \R^{k(n)}: c_i\geq 0, \sum_ic_i \leq 1\}$.
    Since $P_{\frac{1}{n}, i}\in \CC(\Omega)$ it follows that $E_n$ is continuous w.r.t. to weak convergence of probability measures and hence w.r.t. to any $p$-Wasserstein distance, see for example \cite[Theorem 7.12]{Villani_Optimal_Transport}. This verifies (i) in \Cref{def:output_EDAP}. Hölder continuity of $D_n\circ \pi_n$ follows from Hölder and Lipschitz continuity of $D_n$ and $\pi_n$, respectively.
    Further, consider any $\mu\in \PP(\Omega)$ and any 1-Lipschitz $f:\Omega \to \R$. It holds that
\begin{align*}
    \left \vert \int_\Omega f \diff(\mu - D_n \circ E_n(\mu)) \right \vert &= \left \vert \int_\Omega f(x) - \sum_i P_{\frac{1}{n},i}(x) f(x_i^{(n)})d\mu(x) \right \vert\\
    &\leq \sup_{x\in \Omega} \left \vert f(x) - \sum_i P_{\frac{1}{n},i}(x) f(x_i^{(n)}) \right \vert.
\end{align*}
For fixed $x'\in \Omega$, the set of $1$-Lipschitz functions $f$ with $f(x') = 0$ is relatively compact in $\CC(\Omega)$ due to the Arzela-Ascoli theorem. Therefore, it is due to \Cref{thm:sampling_BAP} that
\begin{align*}
    \sup_{\mu\in \PP(\Omega)} W_1(\mu, D_n\circ E_n(\mu)) = \sup_{\substack{f\in Lip_1(\Omega), \\ f(x')=0}} \sup_{x\in \Omega}\vert f(x) - \sum_i P_{\frac{1}{n},i}(x) f(x_i^{(n)}) \vert \xrightarrow{n\to \infty}0.
\end{align*}
Hence, $D_n\circ E_n = D_n \circ \pi_n\circ E_n$ converges uniformly to the identity on $\PP(\Omega)$ w.r.t. to the $1$-Wasserstein distance $W_1$. According to \Cref{lem:wasserstein_equivalence}, uniform convergence is w.r.t. to any $W_p$. Therefore, $(\PP(\Omega), W_p)$ has the output-EDAP. 
\end{proof}

\subsubsection{Skorohod spaces $\DD([0, R])$}\label{sec:skorohod}
Within this section, we consider Skorohod spaces, which are the set of c\`adl\`ag functions equipped with the so-called Skorohod topology. We show in \Cref{thm:skorohod_EDAP_R} that c\`adl\`ag functions on $[0,R]$, equipped with any metric inducing the Skorohod topology, satisfy the input-EDAP. In \Cref{sec:skorohod_infty}, we verify this for c\`adl\`ag functions on $[0, \infty)$. It remains an open question whether these spaces also fulfill the output-EDAP. For definitions and basic properties, we mainly follow \cite[Chapters 12 and 16]{Billingsley_Probability_Measures}.
\begin{definition}[C\`adl\`ag-Functions]
    Let $I\subseteq \R$ be a closed interval. A function $f:I\to \R$ is called a c\`adl\`ag function if the following holds:
    \begin{itemize}
        \item[(i)] $f$ is right-continuous;
        \item[(ii)] All left-limits exist, i.e., $\lim_{s\nearrow t} f(s)\in \R.$
    \end{itemize}
    The set of all c\`adl\`ag functions $I\to \R$ is denoted by $\DD(I)$.
\end{definition}
For simplicity, we restrict ourself to real-valued c\`adl\`ag functions. However, their definition as well as the considerations below might also be extended to c\`adl\`ag functions with values in other normed (or metric) spaces than $\R$. 
Note that for $R\in (0,\infty)$, $\DD([0,R])$ can be equipped with the supremum norm, but becomes a non-separable space. Instead, one can define the following metric $d_R$, which induces the so-called Skorohod topology, and turns $\DD([0,R])$ into a separable metric space \cite[Theorem 12.2]{Billingsley_Probability_Measures}. 
\begin{definition}
    Let $\Lambda_R$ be the set of all strictly increasing, continuous mappings $\lambda:[0,R]\to[0,R]$ with $\lambda(0)=0$ and $\lambda(R)=R$. For $f,g\in \DD([0,R])$ define
    \begin{align*}
        d_R(f,g)\coloneqq \inf_{\lambda\in \Lambda_R} \max \Big\{\sup_{t\in [0,R]}\vert t- \lambda(t)\vert, \sup_{t\in [0,R]}\vert f(t)- g(\lambda(t))\vert \Big\}.
    \end{align*}
\end{definition}
Of course, $\big(\DD([0,R]), d_R\big)$ is an $\R$-vector space with pointwise addition of c\`adl\`ag functions. However, it is not a topological vector space, as pointwise addition of c\`adl\`ag functions is not a topological group \cite[Problem 12.2]{Billingsley_Probability_Measures}.
Equipped with $d_R$, the space $\DD([0, R])$ is not complete, but there is a topologically equivalent metric $d^\circ_R$, which turns it into a complete separable space \cite[Theorem 12.2]{Billingsley_Probability_Measures}. Due to Lemma \ref{lem:EDAP_equivalent_metrics}, the input-EDAP is preserved for topologically equivalent metrics. Thus, when showing below that $(\DD([0, R]), d_R)$ has the input-EDAP, this automatically implies the input-EDAP of $(\DD([0, R]), d_R^\circ)$, where the same encoders and decoders can be chosen. 
The consideration of the following mapping $A_\sigma^{(R)}$ will be helpful, as it has a canonical decomposition into an encoder and decoder. 
\begin{definition}\label{def:A_sigma_R}
    Consider $0=s_0<s_1<...<s_k=R$, write $\sigma=\{s_0,...,s_k\}$ and define
    \begin{align*}
        A_\sigma^{(R)}:\DD([0,R]) &\longrightarrow \DD([0,R])\\
        A_\sigma^{(R)} (f)(t) &\coloneqq \begin{cases}
            f(s_{i-1}) \textup{ \, if \, } t\in [s_{i-1}, s_i) \textup{ \, for \, } i=1,...,k,\\
            f(s_k) \textup{ \, if \, } t= s_k.
        \end{cases}
    \end{align*}
    We can write $A^{(R)}_{\sigma} = D_{\sigma}\circ E_{\sigma}$, where
    \begin{align*}
    E_{\sigma}(f) &\coloneqq \Big(f(s_0),f(s_1), ..., f(s_k)\Big)^T,\\
    D_{\sigma}(a)(t) &\coloneqq \begin{cases}
        a_{i-1} \textup{ \, if \, } t\in [s_{i-1}, s_i) \textup{ \, for \, } i=1,...,k,\\
        a_k \textup{ \, if \, } t= s_k.
    \end{cases}  
\end{align*}
For readability, we omit the dependence of $E_\sigma, D_\sigma$ on $R.$ We call $E_\sigma$ and $D_\sigma$ a Skorohod encoder and decoder, respectively.
\end{definition}

Sampling, and hence $E_\sigma$ and $A_\sigma^{(R)}$, is not continuous w.r.t. the metric $d_R$ \cite[p. 134]{Billingsley_Probability_Measures}. However, we show in what follows that for suitable $\sigma$, the encoder and decoder fulfill the requirements in Definition \ref{def:input_EDAP} of the input-EDAP. Although it would be sufficient to prove continuity of the decoders, we show that they are even Lipschitz continuous.
\begin{lemma}
    $D_\sigma$ is Lipschitz continuous for any $\sigma=\{s_0,s_1,...,s_k\}\subset[0,R]$.
\end{lemma}
\begin{proof}
    For $a,b\in \R^{k+1}$, it holds that 
    \begin{align*}
        d_R(D_\sigma(a), D_\sigma(b)) &= \inf_{\lambda\in \Lambda_R} \max \Big\{\sup_{t\in [0,R]}\vert t- \lambda(t)\vert, \sup_{t\in [0,R]}\vert D_\sigma(a)(t)- D_\sigma(b)(\lambda(t))\vert \Big\} \\
        &\leq \sup_{t\in [0,R]}\vert D_\sigma(a)(t)- D_\sigma(b)(t)\vert = \vert a-b\vert_{\infty}.
    \end{align*}
\end{proof}

We need the following result, which can be found in \cite[Chapter 12, Lemma 3]{Billingsley_Probability_Measures}.
\begin{lemma}\label{lem:compact_convergence_Asigma}
    Consider $0=s_0< s_1< ...<s_k=R$ and write $\sigma=\{s_0, ..., s_k\}$. If $\vert s_i - s_{i-1}\vert < \delta$ holds for all $i=1,...,k$, it follows that 
    \begin{align*}
        d_R(A_\sigma^{(R)} f, f) \leq \max \{ \delta, \overline{w}_R(f, \delta)\},    
    \end{align*}
    where $\overline{w}_R(f, \delta)$ is the modulus of continuity of the c\`adl\`ag function $f$, see \cite[P. 171]{Billingsley_Probability_Measures}, or \cite[Eq. (12.6)]{Billingsley_Probability_Measures} for $R=1$.
\end{lemma}
According to \cite[Theorem 12.3]{Billingsley_Probability_Measures}, relatively compact subsets of $(\DD([0, R]), d_R)$ can be characterized as follows:
\begin{theorem}\label{thm:precompact_sets_skorohov}
    A set $K\subset \DD([0, R])$ is relatively compact w.r.t. $d_R$ if and only if
    \begin{align*}
        \sup_{f\in K} \sup_{t\in [0,R]} \vert f(t)\vert < \infty &&\text{and} && \sup_{f\in K} \overline{w}_R(f, \delta)\xrightarrow{\delta \to 0} 0.
    \end{align*}
\end{theorem}

An immediate consequence of \Cref{lem:compact_convergence_Asigma} and \Cref{thm:precompact_sets_skorohov} is that if $\sigma$ is fine enough within $[0,R]$, it follows that $A_\sigma^{(R)}=D_\sigma \circ E_\sigma$ uniformly approximates the identity mapping on compact sets.  \begin{corollary}\label{coro:compact_convergence_Asigma}
    For $n\in \N$, consider $0=s^{(n)}_0< s_1^{(n)}<...<s_{k(n)}^{(n)}=R$ and write $\sigma_n=\{s^{(n)}_0, s_1^{(n)},...,s_{k(n)}^{(n)}\}$. Assume that 
    \begin{align*}
        \max_{i=1,\dots,k(n)} \vert s_i^{(n)}- s_{i-1}^{(n)}\vert \xrightarrow{n \to \infty} 0.
    \end{align*}
    Then for every compact $K\subset \mathcal{D}([0,R])$, it is
    \begin{align*}
        \sup_{f\in K} d_R\left(A_{\sigma_n}^{(R)}(f), f\right) \xrightarrow{n \to \infty} 0.
    \end{align*}
\end{corollary}
\begin{theorem}\label{thm:skorohod_EDAP_R}
    The space $\mathcal{D}([0,R])$ equipped with the metric $d_R$ has the input-EDAP.
\end{theorem}
\begin{proof}
    Consider suitable sets $\sigma_n$ as in Corollary \ref{coro:compact_convergence_Asigma}. It only remains to verify property (i) in Definition \ref{def:input_EDAP} of the encoders $E_{\sigma_n}$. Choose $r:\N \to (0,\infty)$ such that for all $c\in \R^{k(n)}$ it is  
    \begin{align*}
        \vert c \vert \leq \frac{r(n)}{n}\vert c\vert_{\infty}.
    \end{align*}
    Let $K\subset \DD([0,R])$ be compact. Due to Theorem \ref{thm:precompact_sets_skorohov}, there is $N_K\in \N$ such that
    \begin{align*}
        \sup_{f\in K} \sup_{t\in [0,R]} \vert f(t)\vert \leq N_K. 
    \end{align*}
    Hence, for all $n\geq N_K$ it follows that
    \begin{align*}
        \sup_{f\in K} \vert E_{\sigma_n}(f) \vert \leq  \frac{r(n)}{n} \sup_{f\in K} \vert E_{\sigma_n}(f) \vert_{\infty} \leq \frac{r(n)}{n} \sup_{f\in K} \sup_{t\in [0,R]} \vert f(t)\vert \leq r(n).
    \end{align*}    
\end{proof}

\section{New universal operator approximation theorem}\label{sec:approx_theorem}
We recall that one of the main goals of this work is to approximate any continuous operator $G:\X\to\Y$ between suitable separable metric spaces $\X$ and $\Y$ by a sequence of encoder-decoder architectures $G_n=D_n^\Y\circ \varphi_n \circ E_n^\X:\X\to \Y$ that converges uniformly to $G$ on every compact set $K\subseteq \X$, see statement \labelcref{fig:approximation_types:(B)} of \Cref{fig:approximation_types}. In this section, we prove in \Cref{thm:sequential_density_normed} that this is possible if encoders and decoders from the input- and output-EDAP are used for the construction of $G_n$, see \Cref{def:input_EDAP,def:output_EDAP}.
If only statement \labelcref{fig:approximation_types:(A)} is desired, so when $G_n$ is allowed to depend on $K$, we further present a corresponding universal approximation theorem at the end of this section. For this weaker type of approximation, the definitions of the EDAPs can be relaxed, which would then also allow for the consideration of non-separable spaces.

Due to the encoding and decoding, the operator approximation task can be reduced to a function approximation task, that is, approximating functions between spaces $\K^a$ by $\varphi_n$ used in the encoder-decoder architecture $G_n=D_n^\Y\circ \varphi_n \circ E_n^\X$. Therefore, the functions $\varphi_n$ must be chosen from a sufficiently rich set of functions, which leads to the term of universal function approximators. A similar terminology and definition has been considered, for example, within the work of Kratsios et al. \cite[Definition 2.11]{kratsios2023}.

\begin{definition}[Universal function approximator]\label{def:function_approximator}
    A set $\FF$ of functions $\K^a\to \K^b$ is called a \emph{universal function approximator} if for any $a,b\in \N$, compact set $K\subset \K^a$ and continuous function $f:K\to \K^b$  there exists a sequence $(f_n)_{n\in \N}$ in $\FF$ such that
    \begin{align*}
        \sup_{x\in K} \big \vert f(x) - f_n(x) \big \vert \xrightarrow[]{n\to\infty} 0.
    \end{align*}
    Note that $\FF$ may contain discontinuous functions.
\end{definition} 

There are different possible choices for universal function approximators. For example, one can consider different types of neural network architectures such as fully connected or convolutional neural networks \cite{Cybenko1989, Pinkus_1999, Zhou2020}. 
Moreover, polynomials build another type of universal function approximators according to the Stone-Weierstraß theorem \cite[Section XIII.3]{Dugundji_Topologie}.

\begin{remark}
    Let $\mathcal{F}$ be a universal function approximator, which consists of continuous functions. Since $\K^a$ is hemicompact, it is due to \Cref{thm:dense_Frechet_hemicompact} that $\FF\cap \CC(\K^a, \K^b)$ is sequentially dense in $\CC(\K^a, \K^b)$. Thus, for every $f\in \CC(\K^a, \K^b)$ there is a sequence $(f_n)_{n\in \N}$ in $\mathcal{F}$ that converges uniformly to $f$ on every compact $K\subset \K^a$. This means that the approximating sequence $(f_n)_{n\in \N}$ can be chosen independently on the compact set $K$ in \Cref{def:function_approximator}.
\end{remark}

The following basic lemma will be useful for showing our operator approximation theorem. Similar versions have been derived and used in several other studies for proving universal operator approximation results, see for example \cite{Chen1995_approx_thm, Lu2021_DeepONet, Kovachki2021_approxFNO, Schwab2023, Hua2023}.
We state a general version below which is similar to \cite[Lemma 21]{Kovachki2023}, in which it was shown for Banach spaces $\X$ and $\Y$ and some continuous sequence of operators $G_n$, using a different proof. 

\begin{lemma}\label{lem:union_compact_sets}
    Let $\X$ and $(\Y, d_\Y)$ be metric spaces and $G\in \CC(\X, \Y)$. Consider any sequence $G_n:\X\to \Y$ that converges uniformly on every compact set to $G$ and assume that all $G_n$ map compact sets to relatively compact sets. Then for any compact $K\subseteq \X$ the following set is relatively compact:
    \begin{align*}
        V\coloneqq G(K)\cup \left(\bigcup_{n=1}^\infty G_n(K) \right).
    \end{align*}
    Note that $G_n$ is allowed to be discontinuous.
    \begin{proof}
    Consider a sequence $(g_n)_{n\in \N}$ in $V$. We show that it has a subsequence converging to some $g\in \Y$. \textbf{Case 1:} If infinitely many $g_n$ belong to 
    \begin{align*}
         V_N \coloneqq G(K) \cup \left(\bigcup_{n=1}^N G_n(K) \right)
    \end{align*}
    for some $N\in \N$, the existence of such a subsequence follows from the relative compactness of $V_N$. 
    \textbf{Case 2:} Otherwise, assume without loss of generality that $g_n = G_{m(n)}(f_n)$, where $m(n)\in \N$ goes to infinity and $f_n\in K$. By compactness of $K$, there exists a subsequence $(f_{n_k})_{k\in \N}$ converging to some $f\in K$. Therefore, by continuity of $G$ and uniform convergence of $G_n$ to $G$ on $K$, we conclude that 
    \begin{align*}
        d_\Y\big( g_{n_k}  \, ,\, G(f)\big) 
        &\leq  d_\Y\big( G_{m(n_k)}(f_{n_k}) \, ,\, G(f_{n_k})\big) + d_\Y\big( G(f_{n_k})  \, ,\, G(f) \big)  \\
        &\leq \sup_{z\in K} d_\Y\big( G_{m(n_k)}(z) \, ,\, G(z) \big) + d_\Y\big( G(f_{n_k}) \, ,\, G(f) \big) \xrightarrow{k\to\infty} 0.  
    \end{align*}
\end{proof}
\end{lemma}

Some consequence of the previous lemma is the following result, which shows that uniform convergence on every compact set for sequences of operators is preserved under concatenation.

\begin{corollary}\label{coro:sandwich}
    Let $\X,(\Y, d_\Y),(\Z, d_\Z)$ be metric spaces, $G\in \CC(\X, \Y)$ and $H\in \CC(\Y, \Z)$. In addition, let $G_n:\X\to \Y$ and $H_n:\Y\to \Z$ be sequences converging uniformly on all compact sets to $G$ and $H$, respectively. 
    Assume that for each compact $K\subseteq\X$, the sets $G_n(K)$ are relatively compact for $n\geq N_K\in \N$. Then for every compact $K\subseteq \X$ it holds that
    \begin{align*}
        \sup_{f\in K} d_\Z \big( (H\circ G)(f) \, ,\, (H_n\circ G_n) (f) \big) \xrightarrow{n\to\infty} 0.
    \end{align*}
    Note that $G_n, H_n$ are allowed to be discontinuous.
\end{corollary}
\begin{proof}
    Let $K\subseteq \X$ be compact and $\varepsilon>0$.Without loss of generality, assume that $G_n(K)$ is relatively compact for every $n\in \N$. Otherwise, consider the sequence $G_n$ starting at some index $n=N_K\in \N$.
    According to Lemma \ref{lem:union_compact_sets} the set 
    \begin{align*}
        V \coloneqq G(K) \cup \left( \bigcup_{n=1}^\infty G_n(K)\right)
    \end{align*}
    is relatively compact. Therefore, $H$ is uniformly continuous on $\overline{V}$, which means that there is some $\delta>0$ such that $d_\Z\big( H (f) , H (g)\big) < \varepsilon / 2$ whenever $f,g\in V$ with $d_\Y\big( f , g \big) < \delta$. Due to uniform convergence of $G_n$ to $G$ on $K$, there exists $N_G\in \N$ such that for all $n\geq N_G$ it holds that
    \begin{align*}
        \sup_{f\in K} d_\Y\big( G_n (f) \, ,\, G(f) \big) < \delta.
    \end{align*}
    Therefore, we conclude for those $n\geq N_G$ that
    \begin{align*}
        \sup_{f\in K} d_\Z\big( (H\circ G) (f) \, ,\, (H \circ G_n)(f) \big) < \frac{\varepsilon}{2}.
    \end{align*}
    Since $H_n$ converges uniformly to $H$ on the compact set $\overline{V}$, there exists some $N_H\in \N$ such that for all $n\geq N_H$ it is
    \begin{align*}
        \sup_{f\in K} d_\Z\big( (H \circ G_n) (f) \, ,\, (H_n \circ G_n)(f) \big) \leq \sup_{g\in \overline{V}} d_\Z\big(H (g)\, ,\, H_n (g) \big) < \frac{\varepsilon}{2}.
    \end{align*}
    It follows from the triangular inequality and by choosing $n\geq \max\{N_G, N_H\}$ that
    \begin{align*}
        \sup_{f\in K} d_\Z\big( (H \circ G) (f) \, ,\, (H_n\circ G_n)(f) \big) < \varepsilon,
    \end{align*}
    which shows the claim.
\end{proof}

With the preceding considerations, we are now prepared to prove our main result: a universal operator approximation theorem for operators between spaces having the EDAP. It shows that operator approximation is possible by a sequence of encoder-decoder architectures that converge uniformly to the given operator on every compact set, which is statement \labelcref{fig:approximation_types:(B)} in \Cref{fig:approximation_types}. As discussed in \Cref{sec:approximation_properties}, there are diverse choices for suitable encoders and decoders. As we will point out in Section \ref{sec:examples_architectures}, the theorem is hence applicable, in particular, to many famous architectures used in the field of operator learning, but it is not restricted to neural networks.
\begin{theorem}\label{thm:sequential_density_normed}
    Let $(\X, d_\X)$ and $(\Y,  d_\Y)$ be metric spaces having the input-EDAP and output-EDAP, respectively. Denote suitable encoders and decoders by $D_n^\X,  E_n^\X$ and $D_n^\Y, E_n^\Y$. 
    Let $\FF$ be a set of universal function approximators. Then for every $G\in \CC(\X, \Y)$ there exists a sequence $\varphi_n\in \FF$ such that for every compact $K\subseteq \X$ it holds that
    \begin{align*}
        \sup_{f\in K} d_\Y\Big( G(f) \,,\, \left(D_n^\Y \circ \varphi_n \circ E_n^\X \right)(f) \Big) \xrightarrow{n\to\infty}0.
    \end{align*}
\end{theorem}
\begin{proof}
    Consider some $G\in \CC(\X, \Y)$. 
    As all decoders $D_n^\Y$ are uniformly continuous, by \cref{def:output_EDAP} of the output-EDAP, there exist $\delta_n>0$ such that 
    \begin{align}\label{thm:sequential_density_normed_eq00}
        d_\Y\left(D_n^\Y(a), D_n^\Y(b) \right) \leq \frac{1}{n} \textup{ \, for all \, } a,b\in \K^{w_\Y(n)} \textup{ \, with \, }\vert a-b\vert \leq \delta_n.
    \end{align}
    Choose for $\X$ the function $r:\N\to (0,\infty)$ as in \Cref{def:input_EDAP}. For every $n\in \N$ define the compact sets 
    \begin{align*}
        B_n \coloneqq \overline{E_n^\X(\X)}\cap \overline{B}_{r(n)}(0) \subset \K^{w_\X(n)},
    \end{align*}
    where $\overline{B}_{r(n)}(0)$ is the closed ball around $0\in \K^{w_\X(n)}$ of radius $r(n)$. Since $B_n$ is closed and bounded, it is compact in the finite dimensional space $\K^{w_\X(n)}$. Therefore, since $D_n^\X$ and $E_n^\Y$ are continuous and by \Cref{def:function_approximator} of universal function approximators, for all $n\in \N$ there exists $\varphi_n: \K^{w_\X(n)} \to \K^{w_\Y(n)}$ in $\FF$ that satisfies
    \begin{align}\label{thm:sequential_density_normed_eq11}
        \sup_{x\in B_n} \Big \vert \left(E_n^\Y \circ G \circ D_n^\X\right)(x) - \varphi_n(x) \Big \vert \leq \delta_n,
    \end{align}
    where $\delta_n$ is chosen as in \eqref{thm:sequential_density_normed_eq00}.
    Let $K\subseteq \X$ be compact. Then there exists some $N_K\in \N$ such that for all $n\geq N_K$ it is
    \begin{align}\label{thm:sequential_density_normed_eq22}
        \sup_{f\in K}\left \vert E_n^\X (f) \right \vert \leq r(n).
    \end{align}
    The preceding observations allow us to make for $n\geq N_K$ the estimates
    \begin{align*}
        &\sup_{f\in K} d_\Y \Big( \left(D_n^\Y \circ E_n^\Y \circ G \circ D_n^\X \circ E_n^\X \right) (f)\, , \, \left(D_n^\Y \circ \varphi_n \circ E_n^\X \right)(f) \Big)  \\
        \overset{\eqref{thm:sequential_density_normed_eq22}}{\leq}&\sup_{x\in B_n} d_\Y \Big( \left(D_n^\Y \circ E_n^\Y \circ G \circ D_n^\X \right) (x)\, , \, \left(D_n^\Y \circ \varphi_n \right)(x) \Big)
        \overset{\eqref{thm:sequential_density_normed_eq11}, \eqref{thm:sequential_density_normed_eq00}}{\leq} \frac{1}{n} \xrightarrow{n\to\infty}0.
    \end{align*}
    Note that the inequality in \eqref{thm:sequential_density_normed_eq22} implies that $D_n^\X\circ E_n^\X(K)$ is relatively compact for all $n\geq N_K$, as $D_n^\X$ is continuous on $\overline{E_n^\X(\X)}$. Hence, the claim follows from the definitions of the input- and output-EDAP, multiple applications of \Cref{coro:sandwich} and the observation
    \begin{align*}
        &\phantom{+}\sup_{f\in K} d_\Y\Big( G(f) \, , \, \left(D_n^\Y \circ \varphi_n \circ E_n^\X \right)(f) \Big) \\
        \leq &\phantom{+}\sup_{f\in K} d_\Y\Big( G(f) \, , \, \left(D_n^\Y \circ E_n^\Y \circ G \circ D_n^\X \circ E_n^\X \right) (f) \Big) \\&+\sup_{f\in K} d_\Y\Big( \left(D_n^\Y \circ E_n^\Y \circ G \circ D_n^\X \circ E_n^\X \right) (f) \,,\, \left(D_n^\Y \circ \varphi_n \circ E_n^\X \right)(f) \Big).
    \end{align*}
\end{proof}

\begin{remark}\label{rem:discontinuity_chen}
    Having a closer look at the proof of \Cref{thm:sequential_density_normed}, one may notice that the  \Cref{def:output_EDAP} of the output-EDAP admits the following relaxation, which we omitted for better readability: Let $(\Z, d_\Z)$ be a metric space and $\Y\subseteq \Z$. In the definition of the output-EDAP of $(\Y, d_\Z)$, one may allow the decoder $D_n^\Y:\K^{\omega_\Y(n)}\to \Z$ to take values in the larger space $\Z$. This permits extrapolation, while \Cref{thm:sequential_density_normed} remains valid. 
    A similar relaxation is possible in \Cref{thm:density_metric} regarding the output-CEDAP.
\end{remark}

In contrast to most results within the operator learning literature, the choice of the approximating sequence $G_n$ in \Cref{thm:sequential_density_normed} is independent of the compact set $K\subseteq \X$. As pointed out in \Cref{thm:dense_Frechet_hemicompact}, the previous result is hence a stronger result for most spaces $\X$ relevant for operator approximation, for example, whenever $\X$ is an infinite-dimensional normed space.
Nevertheless, in \cite{Schwab2023}, an analogous result has been achieved for separable Hilbert spaces $\X, \Y$ and encoder-decoder architectures based on Riesz-bases, see also \Cref{sec:examples_frame} for more information. To the best of our knowledge, \cite{Schwab2023} is the only other study showing uniform convergence of encoder-decoder architectures $G_n$ towards $G$ on every compact set $K$. For other operator learning architectures, such as FNOs or WNOs, we have not found any result similar to the theorem above. However, we highly suspect that also for those approaches, it would be possible to derive such a result, which we leave as an interesting future task.

Furthermore, to the best of our knowledge, \Cref{thm:sequential_density_normed} is the first universal approximation theorem for encoder-decoder architectures that shows statement \labelcref{fig:approximation_types:(B)} for continuous operators between metric spaces (satisfying suitable encoder-decoder approximation properties). In particular, the underlying spaces need not be topological vector spaces, and the input encoders need not be continuous. This allows the consideration of $p$-Wasserstein spaces as input and output spaces; see \cref{sec:wasserstein}. Moreover, Skorohod spaces of c\`adl\`ag functions can be treated as input spaces; see \cref{sec:skorohod}.

The operators $G$ in \Cref{thm:sequential_density_normed} were required to be defined on the whole space $\X$. However, what if $G$ is only defined on a subset $A\subset \X$? 
In \Cref{thm:sequential_density_normed_eq11} in the proof of \Cref{thm:sequential_density_normed}, it was crucial that the decoder $D_n^\X$ maps into the domain of the operator $G$. 
Therefore, one cannot use, in general, the same encoders and decoders for $A$ to obtain a similar result for operators $G\in \CC(A, \Y)$. Further, it is not automatically clear whether the subset $A$ itself has the input-EDAP. We suspect that this may not be true in general, because of the following analogous fact about the $\lambda$-BAP: There exists a separable Banach space $\X$ having the $\lambda$-BAP, but there is a subspace $A\subset \X$, which has not the $\lambda'$-BAP for any $\lambda'>0$, see for example \cite[Corollary 1.13]{Figiel2011}.
Of course, if also $A$ has the input-EDAP, then \Cref{thm:sequential_density_normed} is applicable with suitable encoders and decoders on $A$.
Nevertheless, for closed $A\subset \X$, one can indeed use the encoders from the input-EDAP of $\X$ to approximate operators in $\CC(A, \Y)$, see \Cref{coro:main_dugundji_extension} below. This is due to Dugundji's extension theorem \cite[Theorem 4.1]{Dugundji_Extension}.

\begin{theorem}[Dugundji's extension theorem]
    Let $\X$ be a metric space and $\Y$ a locally convex vector space. Further, let $A\subset \X$ be closed and $f:A \to \Y$ be continuous. Then there exists a continuous extension $F:\X\to \Y$ of $f$, which means that $F(x) = f(x)$ for every $x\in A$.
\end{theorem}

\begin{corollary}\label{coro:main_dugundji_extension}
    Consider the setup as in \Cref{thm:sequential_density_normed}, but in addition $\Y$ being a locally convex metric vector space. Let $A\subset \X$ be closed. For every $G\in \CC(A, \Y)$ there exists a sequence $\varphi_n\in \FF$ such that for every compact $K\subseteq \X$ it holds that
    \begin{align*}
        \sup_{f\in K\cap A} d_\Y \Big( G(f) \, ,\, \left(D_n^\Y \circ \varphi_n \circ E_n^\X \right)(f) \Big) \xrightarrow{n\to\infty}0.
    \end{align*}
\end{corollary}
\begin{proof}
    According to Dugundji's extension theorem, the operator $G$ can be extended to a continuous operator $\X\to\Y$. Therefore, the claim follows from applying \Cref{thm:sequential_density_normed} to the extension of $G$.
\end{proof}

\subsection{Approximation depending on compact sets}\label{sec:CEDAP}
In \Cref{thm:sequential_density_normed}, the input- and output-EDAP of the input and output space has been used for constructing a sequence of encoder-decoder architectures that converges to a given operator uniformly on every compact set (statement \labelcref{fig:approximation_types:(B)} in \Cref{fig:approximation_types}). If instead the weaker statement \labelcref{fig:approximation_types:(A)} is desired, one can derive an analogous approximation theorem, see \Cref{thm:density_metric} below. For this, it is sufficient to consider the following weaker versions of the EDAPs (recall \Cref{def:input_EDAP,def:output_EDAP} of the EDAPs). 

\begin{definition}[Input-CEDAP]\label{def:input_CEDAP}
    A metric space $(\X, d)$ has the compact input-EDAP (input-CEDAP) if for every compact $K\subseteq \X$, there are sequences of mappings $E_{K, n}^\X:K\to \K^{w_\X(K, n)}$ and $D_{K, n}^\X:\overline{E_{K, n}^\X(K)}\to \X$ with the following properties:
    \begin{itemize}
        \item[(i)] $E_{K, n}^\X(K)$ is bounded for every $n\in \N$.
        \item[(ii)] $D_{K, n}^\X$ is continuous.
        \item[(iii)] The mappings $T_{K, n}^\X \coloneqq D_{K, n}^\X \circ E_{K, n}^\X$ satisfy 
        \begin{align*}
            \sup_{f\in K} d\left(f\, ,\, T_{K, n}^\X (f)\right) \xrightarrow{n\to\infty}0.
        \end{align*}
    \end{itemize}
    Note that $E_{K, n}^\X$ is allowed to be discontinuous. In contrast to \cref{def:input_EDAP}, the encoder and decoder must only be defined on $K$ and $\overline{E_{K, n}^\X(K)}$, respectively.
\end{definition}
\begin{definition}[Output-CEDAP]\label{def:output_CEDAP}
     A metric space $(\X, d)$ has the compact output-EDAP (output-CEDAP) if for every compact $K\subseteq \X$, there are sequences of mappings $E_{K, n}^\X:\X\to \K^{w_\X(K, n)}$ and $D_{K, n}^\X:\K^{w_\X(K, n)}\to \X$ with the following properties:
    \begin{itemize}
        \item[(i)] $E_{K, n}^\X$ is continuous on $K$.
        \item[(ii)] $D_{K,n}^\X$ is uniformly continuous. 
        \item[(iii)] The mappings $T_{K, n}^\X \coloneqq D_{K, n}^\X \circ E_{K, n}^\X$ satisfy 
        \begin{align*}
            \sup_{f\in K} d\left(f\, ,\, T_{K, n}^\X (f)\right) \xrightarrow{n\to\infty}0.
        \end{align*}
    \end{itemize}
\end{definition}

Evidently, if a metric space $\X$ has the input- or output-EDAP, it has also the input- or output-CEDAP, respectively.  Further, note that the AP (\Cref{def:AP}) implies both the input- and output-CEDAP. Therefore, also non-separable spaces can have the CEDAPs, whereas this is impossible for the EDAPs. The following result generalizes \cite[Lemma 22]{Kovachki2023}, in which Banach spaces $\X$ and $\Y$ having both the AP were considered. The theorem below covers several existing universal approximation theorems of encoder-decoder networks, for example, for classical DeepONets \cite[Theorem 5]{Chen1995_approx_thm} and  \cite[Theorem 2]{Lu2021_DeepONet}, special cases of MIONets \cite[Theorem 3.1]{Jin2022_MIONET} and BasisONets \cite[Theorem 2.2]{Hua2023}, see \Cref{sec:examples_architectures} for more details on these implications.
As outlined in \Cref{sec:metric_EDAP_spaces}, the following theorem also handles operators between metric input and output spaces, allowing also discontinuous encoders. For example, it enables the consideration of $p$-Wasserstein and Skorohod spaces.
\begin{theorem}\label{thm:density_metric}
    Let $\X$ and $(\Y, d_\Y)$ be metric spaces having the input- and output-CEDAP with mappings $D_{K, n}^\X,  E_{K, n}^\X$ and $D_{K', n}^\Y, E_{K', n}^\Y$, respectively. 
    Let $\FF$ be a set of universal function approximators. Then for every $G\in \CC(\X, \Y)$ and every compact $K\subseteq \X$, there exists a sequence $\varphi_{K, n}\in \FF$ and compact $K'\subseteq \Y$ such that
    \begin{align*}
        \sup_{f\in K} d_\Y\Big(Gf \, , \, \left(D_{K', n}^\Y \circ \varphi_{K, n} \circ E_{K, n}^\X \right)f\Big) \xrightarrow{n\to\infty}0.
    \end{align*}
\end{theorem}
\begin{proof}
    The proof is similar to and simpler than the proof of \Cref{thm:sequential_density_normed} due to the higher flexibility regarding the dependency on the compact sets $K$. Note that $K'$ can be chosen as
    \begin{align*}
        K'\coloneqq \overline{\bigcup\limits_{n\in \N} G\circ D_{K, n}^\X \circ E_{K, n}^\X(K)}.
    \end{align*}
\end{proof}

\begin{remark}[Neural Filters]
    If $\X$ and $\Y$ are Banach spaces - or more generally Fr\'echet spaces, see \cref{rem:schauder_basis_frechet_EDAP} - both having Schauder bases, \cref{thm:sequential_density_normed} and \cref{thm:density_metric} also yield a universal approximation theorem for Neural Filters \cite{Galimberti_2026} as in statements \labelcref{fig:approximation_types:(B)} and \labelcref{fig:approximation_types:(A)}, respectively. One can just use the basis encoders and decoders from \cref{thm:schauder_basis_BAP}.
    Nevertheless, statement \labelcref{fig:approximation_types:(A)} can alternatively be obtained for Neural Filters as a combination of
    \cite[Theorem 1]{Galimberti_2026} with \cite[Lemma 2.6]{kratsios2023}.
\end{remark}

\begin{remark}\label{rem:comparison_galimberti}
In \cite[Theorem 3.7, Lemma 2.6]{kratsios2023} and \cite[Proposition 4.15]{galimberti2024} the authors proved statement \labelcref{fig:approximation_types:(A)}, but not \labelcref{fig:approximation_types:(B)}, for broad classes of encoder-decoder architectures and diverse metric/topological input and output spaces. Nevertheless, neither their results nor \cref{thm:density_metric} are more general than the other regarding possible choices of encoders and decoders.
For instance, \cref{thm:density_metric} allows the usage of discontinuous encoders $E_{K, n}^\X$, which is not directly possible in \cite{kratsios2023, galimberti2024}. As pointed out in Section \ref{sec:metric_EDAP_spaces}, there are several natural and constructive choices for discontinuous encoders.
On the other hand, discontinuous decoders $D_{K', n}^\Y$ can be treated in \cite{kratsios2023}, but not within our framework. In \cite{galimberti2024} suitable non-metrizable topological spaces (so-called quasi-Polish spaces) can be considered as input and output spaces.
As the main focus of our study is universal operator approximation as in statement \labelcref{fig:approximation_types:(B)}, a more detailed comparison would be out of scope and we let it as an interesting future task. A comparison should take into account at least the following aspects: 1) Which input and output spaces can or cannot be handled in the different frameworks? 2) Which encoders and decoders can or cannot be considered within the respective spaces? 3) How explicit are the constructions of encoders and decoders?
\end{remark}

\section{Applications}
In \Cref{thm:sequential_density_normed} and \Cref{thm:density_metric} we proved that continuous operators $G:\X\to \Y$ can be approximated by encoder-decoder architectures if the metric spaces $\X$ and $\Y$ have the input- and output-(C)EDAP, respectively. As shown in \Cref{sec:approximation_properties}, many spaces can be considered - even Wasserstein or Skorohod spaces - and there are diverse choices for suitable corresponding encoders and decoders.

In Section \ref{sec:examples_architectures} below, we point out that our developed theoretical framework unifies and extends approximation theory for several well-known encoder-decoder architectures used in operator learning, such as classical DeepONets \cite{Chen1995_approx_thm, Lu2021_DeepONet}, architectures based on frames or Riesz bases \cite{Schwab2023}, BasisONets \cite{Hua2023} or special cases of MIONets \cite{Jin2022_MIONET}.

In Section \ref{sec:applications_OT}, we illustrate some potential applications within optimal transport. For example, we propose a combination of \emph{optimal transport neural operators} \cite{Kovachki2025_OTNO} with an encoder-decoder architecture. Further, we sketch how \Cref{thm:sequential_density_normed} may help to verify approximation capabilities of \emph{geodesic operator networks} \cite{Gracyk_geonet}. 
    
\subsection{Well-known encoder-decoder architectures}\label{sec:examples_architectures}
In this section, we demonstrate that the approximation theory in \Cref{sec:approx_theorem} is modular in the sense that different encoder-decoder architectures, as well as different input and output spaces can be treated within a single unified framework.
In contrast, within the operator learning literature, individual approximation results are derived for specific architectures and therefore do not automatically extend to other architectures. 
By considering more general architectures in \Cref{thm:sequential_density_normed} and \Cref{thm:density_metric}, we obtain a more unified theoretical framework.
Moreover, most approximation theorems in the literature establish only statement \labelcref{fig:approximation_types:(A)}, whereas \Cref{thm:sequential_density_normed} proves the topologically stronger statement \labelcref{fig:approximation_types:(B)} for, in particular, all architectures below. 
Throughout this section, we consider $\R$-vector spaces, in accordance with the standard setting in the operator learning literature. The analogous results for $\C$-vector spaces can be obtained in the same way.
\subsubsection{Classical DeepONets}\label{sec:DeepONet_classic}
As a first example for approximating continuous operators $G:\X\to \Y$, we consider classical DeepONets, which have been presented in \cite{Lu2021_DeepONet, Lanthaler2022} and are inspired by the shallow architectures constructed within \cite{Chen1995_approx_thm}.
For the definition of a classical DeepONet, assume that $\X=\X(\Omega_\X, \R)$ and $\Y = \Y(\Omega_\Y, \R)$ are normed spaces of (for readability) $\R$-valued functions with domain spaces $\Omega_\X$ and $\Omega_\Y$, where $\Omega_\X$ is a compact metric space and $\Omega_\Y\subset \R^{d_\Y}$ is compact.
\begin{definition}[Classical DeepONets]\label{def:DeepONets} 
    Fix points $y_1,\dots,y_k$ in $\Omega_\X$. Let $\varphi:\R^k\to \R^p$ and $\psi:\Omega_\Y \to \R^p$ be neural networks. A classical DeepONet is a mapping of the form
    \begin{align*}
    \hat{G}: \X(\Omega_\X, \R) &\longrightarrow \Y(\Omega_\Y, \R)\\
        \big(\hat{G}(f) \big)(z) &= \sum_{i=1}^p \varphi_{i}\big(f(y_1),\dots,f(y_k)\big) \psi_i(z),
    \end{align*}
    where $\varphi_i,\psi_i$ denote the i-th output coordinates of $\varphi$ and $\psi$, respectively.
\end{definition}
In parts of the literature, the term \emph{DeepONet} is used more broadly to include architectures whose encoder is not necessarily based on point evaluations of the input function $f$. To distinguish these settings, we use the term \emph{classical DeepONet} for architectures that encode $f$ through finitely many samples. This terminology allows for a more precise discussion of approximation-theoretic results for different encoder classes.

To obtain universal approximation theorems for classical DeepONets as in statement \labelcref{fig:approximation_types:(B)}, one can simply interpret them as encoder-decoder architectures $\hat{G} = D\circ \varphi \circ E$ and apply \Cref{thm:sequential_density_normed} as follows:
\begin{itemize}
    \item Choose sampling encoders on $\X(\Omega_\X, \R)$ from \Cref{thm:sampling_BAP} or \Cref{coro:sampling_BAP}; \\
    (Also possible: Encoders from \Cref{app:coro_Ck} which use samples of $f'$, or sampling encoders from \Cref{thm:skorohod_EDAP_R} or \Cref{thm:skoro_infty_EDAP} if $\X$ is a Skorohod space.)
    \item Choose $\varphi$ from a universal function approximator $\FF$ which consists of neural networks, see \Cref{def:function_approximator}. Examples for $\FF$ can be found, e.g., in \cite{Cybenko1989, Pinkus_1999, Zhou2020};
    \item Choose dense decoders on $\Y$ from \Cref{thm:Tn_substitute_dense_set}, that correspond to a dense set $S\subset \Y(\Omega_\Y, \R)$ of neural networks, so all $\psi_i$ of the DeepONet should belong to $S$. Examples for $S$ can be found, e.g., in \cite{Pinkus_1999, DeVore2021relu_overview} if $\Y$ is a Lebesgue space, Sobolev space or space of continuously differentiable functions.
\end{itemize}

In particular, \Cref{thm:sequential_density_normed} implies and extends \cite[Theorem 2]{Lu2021_DeepONet}, in which the weaker statement \labelcref{fig:approximation_types:(A)} was shown for classical DeepONets and the special case of $\X=\CC(\Omega_\X, \R)$, $\Y=\CC(\Omega_\Y, \R)$, and $\Omega_\X$ being a compact subset of a Banach space.

The same setup for $\X$ and $\Y$ has been considered within \cite[Theorem 5]{Chen1995_approx_thm} to derive statement \labelcref{fig:approximation_types:(A)} for shallow classical DeepONets. Noteworthy, the constructed decoders for $\Y=\CC(\Omega_\Y, \R)$ can be expressed as
\begin{align*}
    (D c)(x) = \sum_i c_i g(w_i \cdot x + b_i),
\end{align*}
where $g:\R \to \R$ is a suitable activation function and the parameters $w_i\in \R^{d_\Y}, b_i\in \R$ depend on the compact set $K\subset \Y$ on which approximation is desired. Note that these decoders are not dense decoders as in \Cref{thm:Tn_substitute_dense_set} since the set $\{ x\mapsto g(w\cdot x+b): w\in \R^{d_\Y}, b\in \R\}$ is not dense in $\Y=\CC(\Omega_\Y, \R)$. Nevertheless, it follows from \cite[Theorem 3]{Chen1995_approx_thm} that there exist encoders $E$, such that $D\circ E$ are suitable choices for the output-CEDAP of $\Y$. 
Hence, \cite[Theorem 5]{Chen1995_approx_thm} is a special case of \Cref{thm:density_metric}.
Note that in \cite{Chen1995_approx_thm}, the activation function $g$ is allowed to be discontinuous. Nevertheless, according to \Cref{rem:discontinuity_chen}, one can just consider the decoder $D$ as a mapping into the larger space of bounded, possibly discontinuous, functions $\Omega_\Y\to \R$ and still obtain an approximation result by \Cref{thm:density_metric}. For that, one needs to assume, however, that $g$ maps compact sets into bounded sets.

\begin{remark}
    Instead of classes of neural networks for $\FF$ or $S\subset \Y(\Omega_\Y, \R)$ in \Cref{def:DeepONets}, one can also use other classes of functions and obtain a universal approximation result by \Cref{thm:sequential_density_normed}. For example, one could consider polynomials \cite[Section XIII.3]{Dugundji_Topologie}, splines \cite{Birman_1967, Anderson2014}, or wavelets \cite{Meyer_1993}.
\end{remark}

\begin{remark}
    If $\Y(\Omega_\Y, \R)$ has a Schauder basis and $S\subset \Y$ is a dense set, there exists a Schauder basis of $\Y$ from $S$. This is because Schauder bases are stable against small perturbations, see for example \cite[Proposition I.1.7]{Lindenstrauss_classical_banach_spaces}.
    Therefore, instead of using the dense decoders, one could also use the basis decoders from \Cref{thm:schauder_basis_BAP} corresponding to a Schauder basis of, for example, neural networks and obtain again an approximation result for classical DeepONets.
\end{remark}

\begin{remark}
    Using a sampling encoder, as for classical DeepONets, may not be reasonable in every function space. For example, if the input space is a Lebesgue space $\X = L^2(\Omega_\X, \R)$, not every $f\in \X$ has a continuous representative, which makes sampling not well-defined. As a way out, one could use an encoder corresponding to a Schauder basis, see Section \ref{sec:schauder_DeepONets}. In Hilbert spaces, such as $L^2(\Omega_\X, \R)$, one can also make use of frame encoders, which is outlined in Section \ref{sec:examples_frame}.
\end{remark}

\subsubsection{MIONets}\label{sec:schauder_DeepONets}   
    In this section, we discuss \emph{multi input operator networks} (MIONets), which were introduced in \cite[Section 3.1]{Jin2022_MIONET}. MIONets are designed to approximate operators of the form $\X_1\times ... \times \X_N\to \Y(\Omega_\Y, \R)$, where the $\X_i$ are Banach spaces having Schauder bases. To simplify the presentation, we restrict our attention to the case $N=2$. Moreover, we consider only low-rank MIONets, which constitute the default architecture in \cite{Jin2022_MIONET} due to their lower computational cost compared to high-rank variants. Let $\Y(\Omega_\Y, \R)$ be again a normed (or metric) space of functions defined on a compact set $\Omega_\Y \subset \R^{d_\Y}$.

    \begin{definition}[Low-rank MIONets]\label{def:mionets} 
        Denote the coefficient functionals with respect to some Schauder bases of $\X_j$ by $c_i^{(j)}:\X \to \R$. Let $\varphi^{(1)}:\R^k\to \R^p,$ $\varphi^{(2)}:\R^l\to \R^p$ and $\psi:\Omega_\Y \to \R^p$ be neural networks. A (low-rank) MIONet is a mapping of the form
        \begin{align*}
            \hat{G}: \X_1 \times \X_2 &\longrightarrow \Y(\Omega_\Y, \R)\\
            \big(\hat{G}(f_1,f_2) \big)(z) &= \sum_{i=1}^p  \varphi_{i}^{(1)}\begin{pmatrix}
                c_1^{(1)}(f_1) \\ \vdots \\c_k^{(1)}(f_1)
            \end{pmatrix}\varphi_{i}^{(2)}\begin{pmatrix}
                c_1^{(2)}(f_2) \\ \vdots \\c_l^{(2)}(f_2)
            \end{pmatrix}  \psi_i(z),
        \end{align*}
        where $\varphi_i^{(1)}, \varphi_i^{(2)},\psi_i$ denote the i-th output coordinates of $\varphi^{(1)}, \varphi^{(2)}$ and $\psi$, respectively.
    \end{definition}
    In \cite[Theorem 3.1]{Jin2022_MIONET}, it has been shown that continuous operators $G:\X_1\times ... \times \X_N\to \Y(\Omega_\Y, \R)$ can be approximated by MIONets as in statement \labelcref{fig:approximation_types:(A)}, in which the choice of the approximating sequence of MIONets $\hat{G}_n$ depends on the compact set $K\subset \X_1\times...\times \X_N$. For $N=1$, \Cref{thm:sequential_density_normed} yields the stronger approximation property described in statement \labelcref{fig:approximation_types:(B)} for low-rank MIONets as follows:
    \begin{itemize}
        \item Choose basis encoders on $\X_1$ from \Cref{thm:schauder_basis_BAP};
        \item As in Section \ref{sec:DeepONet_classic}, choose $\varphi^{(1)}$ from a universal function approximator $\FF$ which consists of neural networks;
        \item As in Section \ref{sec:DeepONet_classic}, choose dense decoders on $\Y$ from \Cref{thm:Tn_substitute_dense_set}, that correspond to a dense set $S\subset \Y(\Omega_\Y, \R)$ of neural networks.
    \end{itemize}

    For $N>1$, \Cref{thm:sequential_density_normed} can likewise be applied to approximate continuous operators $\X_1\times ... \times \X_N\to \Y$. Indeed, one may view the Cartesian product as a single Banach space $\X=\X_1\times\cdots\times\X_N$. It is straightforward to verify that $\X$ inherits a Schauder basis from the spaces $\X_i$. Applying \Cref{thm:sequential_density_normed} using Schauder basis encoders on $\X$ then leads to architectures with a single $\varphi\in \FF$ that receives the Schauder basis coefficients associated with all spaces $\X_i$. Consequently, the resulting architecture is not a low-rank MIONet.
    
    To handle low-rank MIONets for $N>1$, one can however combine \Cref{thm:sequential_density_normed} with the idea in \cite[Section 2.5]{Jin2022_MIONET} of identifying $\CC(K_1\times K_2, \Y)$ with the injective tensor product $\big(\CC(K_1, \R)\hat{\otimes}_\varepsilon\CC(K_1, \R)\big)\hat{\otimes}_\varepsilon \Y$ for compact sets $K_i\subseteq\X_i$. By that, one obtains an approximation \Cref{thm:mionet} as in statement \labelcref{fig:approximation_types:(A)}, which extends \cite[Theorem 3.1]{Jin2022_MIONET} in that more general input spaces can be considered, and different encoders than basis encoders can be used, for example, sampling encoders or encoders from Section \ref{sec:wasserstein}. 
    We suspect that for $N>1$, it is not possible, in general, to prove approximation by MIONets as in statement \labelcref{fig:approximation_types:(B)}. This is because the construction of isomorphisms between $\big(\CC(K_1, \R)\hat{\otimes}_\varepsilon\CC(K_1, \R)\big)\hat{\otimes}_\varepsilon \Y$ and $\CC(K_1\times K_2, \Y)$ depends on the compact sets $K_1$ and $K_2$.

    \begin{theorem}\label{thm:mionet}
    Let $\X_1,\X_2$ be metric spaces having the input-EDAP with encoders $E_{n}^{(1)}$ and $E_{n}^{(2)}$, respectively. Let $\FF$ be a universal function approximator and $S$ be a dense subset of a Banach space $\Y$. Then for every $G\in \CC(\X_1\times \X_2, \Y)$ and compact $K\subseteq \X_1\times \X_2$, there exist $\varphi_n^{(1)}, \varphi_n^{(2)}\in \FF$  and $\psi_{n, 1},..., \psi_{n, p_n}\in S$ such that
    \begin{align*}
        \sup_{(f_1, f_2)\in K} \left\Vert G(f_1, f_2) -  \sum_{i=1}^{p_n} \varphi_{n,i}^{(1)}\big(E_n^{(1)}(f_1)\big) \varphi_{n,i}^{(2)}\big(E_n^{(2)}(f_2)\big) \psi_{n, i}\right\Vert  \xrightarrow{n\to\infty}0,
    \end{align*}
    where $\varphi_{n, i}^{(1)}$ and $\varphi_{n, i}^{(2)}$ denote the i-th output coordinates of $\varphi_n^{(i)}$ and $\varphi_n^{(2)}$, respectively. Note that $\varphi_n^{(1)}, \varphi_n^{(2)}, \psi_{n, i}$ depend on $K$, which is omitted within the notation.
\end{theorem}
\begin{proof}
    A proof is provided in Appendix \ref{sec:proof_mionet}.
\end{proof}

\begin{remark}\label{rem:deeponet_vs_mionet}
    As we have seen in this and the previous section, \Cref{thm:sequential_density_normed} covers both approximation by classical DeepONets and MIONets ($N=1$), where the difference is only within the choice of the encoders: sampling versus basis encoders. In contrast, \cite[Theorem 3.1]{Jin2022_MIONET} does not directly provide an approximation result for classical DeepONets from \Cref{def:DeepONets}. The reason is that, in general, Schauder basis encoders cannot coincide with sampling encoders, see \Cref{thm:sampling_schauder_differ_main}. Therefore, additional arguments would be required to obtain an approximation result for classical DeepONets from \cite[Theorem 3.1]{Jin2022_MIONET}. 
    In \cite[Example 2.8]{Jin2022_MIONET}, the authors state that their theory immediately yields an approximation result for DeepONets. As discussed above, this conclusion does not directly apply to the classical DeepONet architecture of \Cref{def:DeepONets}, which employs sampling encoders. However, the term DeepONet is often used more broadly - including in \cite{Jin2022_MIONET} - to encompass architectures with different choices of encoders. In the context of \cite[Example 2.8]{Jin2022_MIONET}, the statement appears to concern DeepONets with basis encoders.
\end{remark}

\subsubsection{BasisONets}\label{sec:basisONet}
For classical and Schauder basis DeepONets, sampling and basis encoders were considered. In this section, we consider BasisONets, which were introduced in \cite{Hua2023} as mappings between Lebesgue spaces $\X=L^2(\Omega_\X, \R)$ and $\Y = L^2(\Omega_\Y, \R)$, where $\Omega_X\subset \R^{d_\X}$ and $\Omega_\Y\subset \R^{d_\Y}$ are compact sets. For BasisONets, the encoder computes inner products with a finite selection of neural networks. We consider a more general setup for the function spaces $\X$ and $\Y$. Namely, assume that $\X = \X(\Omega_\X, \R)$ is a separable Hilbert space and $\Y=\Y(\Omega_\Y, \R)$ is a normed space. 
\begin{definition}[BasisONet]
    Let all $\varphi:\R^k \to \R^p$, $u_1,...,u_k\in \X(\Omega_\X, \R)$ and $\psi_1,..., \psi_p\in \Y(\Omega_\Y, \R)$ be neural networks. A BasisONet is a mapping
    \begin{align*}
        \hat{G}:\X(\Omega_\X, \R) &\longrightarrow \Y(\Omega_\Y, \R)\\
        f &\longmapsto \sum_{i=1}^p \varphi_{i}\big(\langle f, u_1 \rangle_\X, ..., \langle f, u_k\rangle_\X \big) \psi_i,
    \end{align*}
    where $\varphi_i$ denotes the i-th output coordinate of $\varphi$.
\end{definition}

To obtain universal approximation theorems for BasisONets as in statement \labelcref{fig:approximation_types:(B)}, one can simply interpret them as encoder-decoder architectures $\hat{G} = D\circ \varphi \circ E$ and apply \Cref{thm:sequential_density_normed} as follows:
\begin{itemize}
    \item Choose dense encoders on $\X(\Omega_\X, \R)$ from \Cref{thm:Tn_substitute_dense_set_HilbertSpace} corresponding to a dense set $S_\X\subset \X$ of neural networks. If $\X=L^2(\Omega_\X, \R)$ or $\X$ being a Sobolev Hilbert space, $S_\X$ can, for example, be chosen as a set of ReLU-networks \cite{DeVore2021relu_overview};
    \item As in Section \ref{sec:DeepONet_classic}, choose $\varphi$ from a universal function approximator $\FF$ which consists of neural networks;
    \item As in Section \ref{sec:DeepONet_classic}, choose dense decoders on $\Y$ from \Cref{thm:Tn_substitute_dense_set}, that correspond to a dense set $S_\Y\subset \Y(\Omega_\Y, \R)$ of neural networks.
\end{itemize}

In particular, \Cref{thm:sequential_density_normed} implies and extends \cite[Theorem 2.2]{Hua2023}, in which the weaker statement \labelcref{fig:approximation_types:(A)} was shown for BasisONets and the special case of $\X=L^2(\Omega_\X, \R)$ and $\Y=L^2(\Omega_\Y, \R)$. Note that in \cite{Hua2023}, BasisONets are defined with $u_j$ and $\psi_i$ being so-called neural bases. That is, they approximate finitely many elements of an orthonormal basis. This is analogous to the construction of dense encoders and decoders in the proof of \Cref{thm:Tn_substitute_dense_set_HilbertSpace}.

\subsubsection{Frame Architectures}\label{sec:examples_frame}
In this section, we make use of the frame encoders and decoders introduced in Section \ref{sec:Tn_frames} to define a class of encoder--decoder architectures based on frames. Special cases of these architectures were studied in \cite{Schwab2023} and \cite[Definition 4.12]{Castro2023_ONBs}, where Riesz bases and orthonormal bases were used in place of frames, respectively.
Throughout this section, let $\X$ and $\Y$ be infinite-dimensional separable Hilbert spaces. The finite-dimensional case can be treated analogously.
        \begin{definition}[Frame Architectures]\label{def:frame_architectures}
            Consider a frame $(f_i)_{i\in \N} \subset \X$ of $\X$ along with a dual frame $(f_i^*)_{i\in\N}\subset \X$. Let $(g_i)_{i\in \N}\subset \Y$ be a frame of $\Y.$ 
            Further, let $\varphi:\K^k \to \K^p$ be a neural network. A frame architecture is a mapping
            \begin{align*}
                \hat{G}:\X &\longrightarrow \Y\\
                f &\longmapsto \sum_{i=1}^p \varphi_{i} \left( \langle f, f^*_1 \rangle, \dots, \langle f, f^*_k \rangle \right) g_i,
            \end{align*}
             where $\varphi_i$ denotes the i-th output coordinate of $\varphi$
        \end{definition}
        To obtain a universal approximation theorem for frame architectures as in statement \labelcref{fig:approximation_types:(B)}, we can interpret them as encoder-decoder architectures $\hat{G} = D\circ \varphi \circ E$ as follows and apply \Cref{thm:sequential_density_normed}:
\begin{itemize}
    \item Choose frame encoders on $\X$ and frame decoders on $\Y$ from \Cref{thm:frames};
    \item As in Section \ref{sec:DeepONet_classic}, choose $\varphi$ from a universal function approximator $\FF$ which consists of neural networks.
\end{itemize}

In particular, \Cref{thm:sequential_density_normed} implies \cite[Theorem 3.1]{Schwab2023}. More precisely, both results establish the stronger approximation statement \labelcref{fig:approximation_types:(B)}, whereas \Cref{thm:sequential_density_normed} applies in the more general setting of frames rather than being restricted to Riesz bases.
Furthermore, \Cref{thm:sequential_density_normed} yields an approximation theorem for Deep-H-ONets \cite[Definition 4.12]{Castro2023_ONBs}, whose encoders and decoders are induced by orthonormal bases. Since orthonormal bases are a particular class of Riesz bases, this result is likewise covered by \cite[Theorem 3.1]{Schwab2023}.

\subsection{Applications in optimal transport}\label{sec:applications_OT}
We have seen in Sections \ref{sec:approx_theorem} and \ref{sec:wasserstein} that encoder-decoder architectures can be used to approximate continuous operators $\X\to \Y$, where $\X$ or $\Y$ are $p$-Wasserstein-spaces $(\PP_p(\Omega), W_p)$. In this section, we present some applications which may benefit from our developed theory.
\subsubsection{Surrogates for (entropic) optimal transport maps}
In this section, we discuss some operators arising in \emph{linear optimal transport} that fit into our approximation framework.
The concept of linear optimal transport has been proposed in \cite{Wang2012_LOT}, and the basic idea is to embed the non-linear space $(\PP_p(\Omega), W_p)$ into the Hilbert space $L^2(\Omega, \R^d; \rho)$ to benefit from the latter's structure. This is typically done by fixing a reference measure $\rho$ and then identifying a measure $\mu$ with the corresponding optimal transport map $T_{\rho, \mu}\in L^2(\Omega, \R^d; \rho)$ from $\rho$ to $\mu$.
We observe below that these embedding operators $G_\rho: \mu \mapsto T_{\rho, \mu}$ can be approximated by encoder-decoder architectures, according to \Cref{thm:sequential_density_normed}.

Therefore, it will be interesting to investigate in future work whether encoder-decoder surrogates for $G_\rho$ can be beneficial in methods that require solving many optimal transport problems, that is, when $G_\rho$ often needs to be evaluated at different $\mu$. Since repeatedly solving optimal transport problems can become computationally demanding, the encoder-decoder surrogates may reduce the overall computational cost.
For example, one could investigate replacing the classical solver for estimating the optimal transport maps within \emph{optimal transport neural operators} (OTNOs) \cite{Kovachki2025_OTNO}. OTNOs are a recent operator learning approach for learning PDE solutions on varying domains by using optimal transport maps to move between complex geometries $(\mu)$ and a fixed reference domain $(\rho)$. On the reference domain, a Fourier Neural Operator is then applied.

Let us state the following version of Brenier's theorem \cite[Theorem 1.16]{Chewi_Statistical_OT}.
\begin{theorem}
    Let $\rho, \mu \in \PP_2(\R^d)$ and assume that $\rho$ is absolutely continuous w.r.t. the Lebesgue measure. Then there exists a convex function $\varphi_{\rho, \mu}:\R^d\to \R$ such that the push-forward of $\rho$ under $\nabla \varphi_{\rho, \mu}$ (defined $\rho$-almost everywhere) is $\mu$ and
    \begin{align*}
        W_2(\rho, \mu)^2 = \int_{\R^d}\Vert x - \nabla \varphi_{\rho, \mu}(x)\Vert^2 d\rho(x).
    \end{align*}
    If $\psi_{\rho, \mu}$ is another such convex function, then $\nabla \psi_{\rho, \mu} = \nabla \varphi_{\rho, \mu}$ holds $\rho$-almost everywhere.
\end{theorem}

Therefore, the optimal transport map $T_{\rho, \mu} \coloneqq \nabla \varphi_{\rho, \mu}$ is uniquely determined up to a $\rho$-null set. $T_{\rho, \mu}$ is also called \emph{Brenier map} \cite{Villani_Optimal_Transport, Delalande2023_stability_brenier, letrouit2025} or \emph{Monge map} \cite{Peyre2019} and is the unique solution of the Monge transport problem. 
Note that the assumption on $\rho$ being absolutely continuous with respect to the Lebesgue measure can be relaxed, see e.g. \cite[Theorem 9.4]{Villani_OldNew}.
It has been noted in \cite{Delalande2023_stability_brenier, letrouit2025} that $G_\rho:\mu\mapsto T_{\rho, \mu}$ is continuous, which we state in the theorem below. A detailed proof can be found in \cite[Proposition 1.4]{letrouit2023_lecturenotes}.
\begin{theorem}
    Let $\rho\in \PP_2(\R^d)$ be absolutely continuous w.r.t. the Lebesgue measure. Then the following operator is continuous with respect to the $2$-Wasserstein distance:
    \begin{align*}
        G_\rho:\PP_2(\R^d)&\longrightarrow L^2(\R^d, \R^d; \rho)\\
        \mu &\longmapsto T_{\rho, \mu}.
    \end{align*}
\end{theorem}
Note that for every $\mu, \nu\in \PP_2(\R^d)$, it holds that $W_2(\mu, \nu)\leq \Vert G_\rho(\mu)-G_\rho(\nu)\Vert_{L^2}$ \cite{Delalande2023_stability_brenier, letrouit2025}. This implies that $G_\rho$ is injective and its inverse on $G_\rho(\PP_2(\R^d))$ is Lipschitz continuous. Therefore, $G_\rho$ is an embedding of $\PP_2(\R^d)$ into $L^2(\R^d, \R^d; \rho)$. 
In linear optimal transport, the embedding $\tilde{G}_\rho: \mu\mapsto (G_\rho(\mu) - \textup{id})$ is considered, see for example \cite[Section 2.3]{Wang2012_LOT}.
Since the operators $G_\rho$ and $\tilde{G}_\rho$ are continuous, they can be approximated by encoder-decoder architectures according to \Cref{thm:sequential_density_normed} as follows:
\begin{itemize}
    \item Since $(\PP_2(\R^d), W_2)$ has the input-EDAP, choose encoders as in \Cref{thm:Wasserstein_DEDAP};
    \item As in Section \ref{sec:DeepONet_classic}, choose $\varphi$ from a universal function approximator $\FF$ which consists of, for example, neural networks;
    \item Since $L^2(\R^d, \R^d; \rho)$ is a separable Hilbert space, choose frame decoders from \Cref{thm:frames}, or dense decoders from \Cref{thm:Tn_substitute_dense_set} corresponding to a dense set $S\subset L^2(\R^d, \R^d; \rho)$ of, for example, neural networks.
\end{itemize}

\begin{remark}
    A very recent and active field of research is to inspect Hölder continuity of $G_\rho$ under additional assumptions on $\rho$ and $\mu$. For example, if $\Omega\subset \R^d$ is compact and convex and if the density of $\rho$ is bounded away from zero and infinity, it has been shown that $G_\rho:\PP(\Omega)\to L^2(\R^d, \R^d; \rho)$ has Hölder exponent $\frac{1}{6}$ with respect to any $p$-Wasserstein distance \cite[Theorem 4.2]{Delalande2023_stability_brenier}.
    If $\mu$ is mapped to the \emph{Brenier potential} $\varphi_{\rho, \mu}$, instead of to $T_{\rho, \mu} =\nabla \varphi_{\rho, \mu}$, the Hölder exponent is $\frac{1}{2}$. For a review on Hölder continuous embeddings of $\PP_p(\Omega)$ into $L^2(\R^d, \R^d;\rho)$, we refer to \cite{letrouit2025}. Stronger regularity properties of $G_\rho$ may improve quantitative approximation rates of encoder-decoder architectures.
\end{remark}
\begin{remark}[Entropic optimal transport]
    A regularized version of the Brenier map $T_{\rho, \mu}$ is the so-called \emph{entropic map} $T_{\rho, \mu}^\varepsilon$ \cite{Pooladian2024}, also called \emph{entropic Brenier map} \cite{Divol2025_stability_brenier}. As shown in \cite{Pooladian2024}, it serves as an approximation of $T_{\rho ,\mu}$ with better computational properties.
    One theoretical advantage is that the requirements on $\rho$ - being absolutely continuous - can be relaxed for obtaining well-posedness and Hölder continuity of $G_\rho^\varepsilon:\mu\mapsto T_{\rho, \mu}^\varepsilon$. Consider any ball $\Omega=\overline{B}_R(0)\subset \R^d$. Then for any $\rho\in \PP(\Omega)$, it is due to \cite[Corollary 3.3]{Divol2025_stability_brenier} that $G_\rho^\varepsilon:\PP(\Omega)\to L^2(\R^d, \R^d;\rho)$ is even Lipschitz continuous with respect to the $2$-Wasserstein distance. 
    Hence, encoder-decoder architectures can also approximate $G_\rho^\varepsilon$.
    A similar Lipschitz continuity result has been derived for Schrödinger maps \cite[Corollary 2.4]{Carlier2024}.
\end{remark}

\subsubsection{Geodesic operator networks}
Gracyk et al. introduced \emph{geodesic operator networks (GeONets)} in \cite{Gracyk_geonet} to learn the operator which maps a pair of absolutely continuous probability measures $\mu_0, \mu_1\in\PP_2^{ac}(\Omega)$ to the corresponding Wasserstein geodesic $\mu:[0,1]\to \PP_2^{ac}(\Omega)$, where $\Omega\subseteq \R^m$ is equipped with the Euclidean norm $\vert \cdot \vert$.
To the best of our knowledge, it has not been shown yet that GeONets can indeed approximate the geodesic operator $(\mu_0, \mu_1)\mapsto \mu$ with respect to a suitable topology. We briefly outline how the theory developed in this work could provide a route towards such an approximation result.

In \cite[Appendix B]{Gracyk_geonet}, optimality conditions were derived for the Wasserstein geodesic problem of finding $\mu$ from $\mu_0, \mu_1$. GeONets then seek to learn solutions $\rho, u:[0,1]\times \Omega\to \R$ of these conditions, which read as follows:
\begin{align*}
    \begin{cases}
        \partial_t \rho + \textup{div}(\rho\nabla u) = 0 &\textup{(continuity equation)}; \\
        \partial_t u+ \frac{1}{2}\Vert \nabla u \Vert^2_2 = 0 &\textup{(Hamilton-Jacobi equation)};\\
        \rho(\cdot, 0) = \rho_0, \rho(\cdot, 1) = \rho_1,
    \end{cases}
\end{align*}
where $\rho_0, \rho_1$ are the densities of $\mu_0, \mu_1$, respectively. GeONets consist of two encoder-decoder architectures that approximate the solution operators
\begin{align*}
    G_{CE}: \PP_2^{ac}(\Omega)\times \PP_2^{ac}(\Omega) &\longrightarrow \Y_{CE}([0,1]\times \Omega, \R)\\
    (\mu_0, \mu_1)&\longmapsto \rho, \\
    G_{HJ}: \PP_2^{ac}(\Omega)\times \PP_2^{ac}(\Omega) &\longrightarrow \Y_{HJ}([0,1]\times \Omega, \R)\\
    (\mu_0, \mu_1)&\longmapsto u,
\end{align*}
corresponding to the continuity and Hamilton-Jacobi equation, respectively. 
$\Y_{CE}$ and $\Y_{HJ}$ denote function spaces for the respective PDE solution functions. $G_{CE}(\mu_0, \mu_1) = \rho$ is then the density representation of the desired geodesic $\mu$.

Both GeONet encoder-decoder architectures are closely related to the architectures covered by \Cref{thm:mionet}, which are generalizations of the low-rank MIONets from \Cref{def:mionets}. Indeed, the GeONet architectures are both of the form
\begin{align*}
    \sum_{i=1}^{p} \varphi_{i}^{(1)}\big(E^{(1)}(\mu_0)\big) \varphi_{i}^{(2)}\big(E^{(2)}(\mu_1)\big) \psi_{i},
\end{align*}
where $\varphi^{(1)}, \varphi^{(2)}$ and $\psi_i:[0,1]\times \Omega\to \R$ are neural networks. The pair of GeONet encoders $E^{(1)}, E^{(2)}$ evaluate the densities $\rho_0, \rho_1$ of $\mu_0, \mu_1$ at finitely many points $x_k\in \Omega$. This is similar to the encoders from \Cref{def:wasserstein_encoder_decoder} and \Cref{coro:wasserstein_abs_cont_EDAP}, which compute $\mu_0(I_k)=\int_{I_k} \rho_0(x) d\lambda(x)$ for finitely many disjoint sets $I_k\subset \Omega$.

Some things remain to be verified in order to get an approximation result by \Cref{thm:mionet} for the approximation of the solution operators $G_{CE}$ and $G_{HJ}$ by the two GeONet encoder-decoder architectures. 
\begin{itemize}
    \item Show that the solutions of the PDEs lay within Banach spaces $\Y_{CE}, \Y_{HJ}$ of functions. Further, check whether there exist dense subsets $S_{CE}\subset \Y_{CE}$ and $S_{HJ}\subset \Y_{HJ}$ of neural networks. These can then be used in \Cref{thm:mionet}.
    \item Show that both solution operators $G_{CE}$ and $G_{HJ}$ are continuous with respect to the $2$-Wasserstein distance. For each operator, then use two encoders from \Cref{coro:wasserstein_abs_cont_EDAP} in \Cref{thm:mionet}.
\end{itemize}
\begin{remark}
    If it turned out that the operators $G_{CE},G_{HJ}$ were not continuous, but Bochner integrable, one could combine our results with the ideas in \cite[Theorem 3.1]{Lanthaler2022} to approximate these operators by the GeONet's encoder-decoder architectures, but with respect to the Bochner norm.
\end{remark}

\section{Conclusion}
We studied the approximation theory of encoder-decoder architectures in the topology of uniform convergence on compact sets. In this setting, two notions of universal approximation naturally arise; see statements \labelcref{fig:approximation_types:(A)} and \labelcref{fig:approximation_types:(B)} in \Cref{fig:approximation_types}. The first requires that, for every continuous operator and compact subset of its input space, there exists a sequence of approximators converging uniformly on that compact set. The second requires the existence of a single sequence that converges uniformly on every compact subset to the operator. While statement \labelcref{fig:approximation_types:(A)} is commonly considered in the operator-learning literature, we established \labelcref{fig:approximation_types:(B)} in a new \Cref{thm:sequential_density_normed} for diverse classes of encoder-decoder architectures and choices for input and output spaces. We proved in \cref{thm:dense_Frechet_hemicompact} that this type of universal operator approximation is a topologically stronger statement in typical operator learning settings, namely whenever $\X$ is an infinite-dimensional normed space (or more generally whenever $\X$ is not hemicompact).
A further advantage of \labelcref{fig:approximation_types:(B)} over \labelcref{fig:approximation_types:(A)} is that it may facilitate the derivation of approximation theorems in Bochner spaces. Indeed, uniform convergence on every compact subset implies pointwise convergence, allowing one to invoke the dominated convergence theorem. This suggests a possible route for extending the universal Bochner approximation theorem \cite[Theorem 3.1]{Lanthaler2022} for classical DeepONets to broader classes of encoder-decoder architectures and more general input and output spaces. We leave this direction for future research.

Moreover, we introduced the EDAPs and CEDAPs, sufficient properties of the underlying normed or metric input and output spaces, that enable universal operator approximation by encoder-decoder architectures as in statements \labelcref{fig:approximation_types:(B)} and \labelcref{fig:approximation_types:(A)}; see \Cref{thm:density_metric,thm:mionet,thm:sequential_density_normed}.
These properties are fulfilled by many spaces considered within operator learning frameworks, for example, normed function spaces such as Lebesgue spaces, Sobolev spaces, spaces of continuously differentiable functions, or - more generally - normed spaces having the (bounded) approximation property.
Beyond normed spaces, the EDAPs and CEDAPs are sufficiently general to include also metric spaces. For instance, we showed that Fr\'echet spaces having Schauder bases as well as $p$-Wasserstein spaces of probability measures can be considered as input or output spaces. Further, Skorohod spaces of c\`adl\`ag functions are suitable input spaces in our operator approximation framework.
These examples not only demonstrate the flexibility of the (C)EDAPs regarding suitable input and output spaces, but also regarding possible choices of encoders and decoders. For example, they allow nonlinear encoders and decoders, discontinuous encoders and non-Lipschitz decoders, thereby relaxing assumptions that are often imposed in the literature.
To the best of our knowledge, the existing operator learning theory for statement \labelcref{fig:approximation_types:(B)} handles operators between
separable Hilbert spaces using Riesz basis encoders and decoders.
Our \cref{thm:sequential_density_normed} hence significantly extends the existing theory for encoder-decoder architectures regarding possible input and output spaces, as well as possible encoder and decoder constructions. Regarding the theory related to statement \labelcref{fig:approximation_types:(A)}, we also make some contributions. For example, some
of our naturally constructed encoders for Wasserstein and Skorohod spaces are currently not covered by the existing theory
for statement (A), which is due to their discontinuity.

Rather than proving separate approximation theorems for individual architectures, our approximation theorems apply to many well-known encoder-decoder architectures at once, for example, classical DeepONets, MIONets, BasisONets, Deep-H-ONets, or Riesz bases architectures. 
We pointed out that our framework not only unifies existing universal approximation theorems for these models, but also extends them by allowing more general input and output spaces and greater flexibility in the choice of encoders and decoders.

We conclude by highlighting four directions for future research.
First, it would be interesting to inspect which other metric spaces have the (C)EDAPs.
Second, if the EDAPs or CEDAPs were refined to incorporate quantitative approximation rates for the identity operator - for instance, rates depending on covering numbers of compact sets - it would be natural to investigate whether analogous rates can be transferred to the approximation of operators satisfying additional regularity assumptions, such as Lipschitz continuity or Fr\'echet differentiability.
Third, for different topologies than uniform convergence on compact sets, such as Bochner spaces $L^p(\X, \Y; \mu)$, it would be interesting to explore analogues of the (C)EDAPs for $\X$ and $\Y$ that directly lead to approximation capabilities of encoder-decoder architectures in the respective topology.
Last but not least, the discussion in Section \ref{sec:applications_OT} indicates that our framework may also be useful for future developments in optimal transport, including the approximation-theoretic analysis of GeONets and related architectures.

\appendix
\section{Supplement topology}
    \subsection{Proof of Lemma \ref{lem:adherent_point_compact_open_topology}}\label{subsec:proof_adherent_point_compact_open_topology}
        \begin{proof}
            For the proof, we follow the arguments used by Dugundji \cite[Chapter XII, Theorem 7.2]{Dugundji_Topologie}, where the convergence of sequences in the compact-open topology is characterized (see also \Cref{lem:convergence_compact_open_topology}). \newline
            Assume that $G\in \CC(\X, \Y)$ is an adherent point of $S$. Let $K\subseteq \X$ be compact and $\varepsilon>0$. By continuity of $G$, for every $x\in K$ there is some $\delta_x>0$ such that 
            \begin{align*}
                G(B_{\delta_x}(x) ) \subseteq B_\varepsilon(G(x)).
            \end{align*}
            Since $K$ is compact, there are finitely many $x_1,\dots,x_n\in K$ such that 
            \begin{align*}
                K \subseteq \bigcup_{i=1}^n B_{\frac{1}{2}\delta_{x_i}}(x_i).
            \end{align*}
            For each $i=1,\dots,n,$ we define the compact set $K_i \coloneqq K \cap \overline{B}_{\frac{1}{2}\delta_{x_i}}(x_i)$, which is contained in $B_{\delta_{x_i}}(x_i)$ and satisfies $K = \cup_{i=1}^n K_i.$ Consider the set
            \begin{align*}
                U \coloneqq \bigcap_{i=1}^n \Big\{ f\in \CC(\X, \Y): f(K_i) \subseteq B_\varepsilon(G(x_i)) \Big\},
            \end{align*}
            which is open in the compact-open topology by \Cref{def:compact_open_topology} and contains $G$. Since $G$ is an adherent point of $S$, there exists $f\in S\cap U$. Furthermore, each $x\in K$ must belong to some $K_i$. Therefore, it follows from the triangular inequality that 
            \begin{align*}
                d_\Y(f(x)\, ,\, G(x)) \leq d_\Y(f(x)\, ,\, G(x_i)) + d_\Y(G(x_i)\, ,\, G(x)) < 2\varepsilon
            \end{align*}
            for all $x\in K.$ As a consequence,
            \begin{align*}
                \sup_{x\in K} d_\Y(f(x)\, ,\, G(x)) \leq 2 \varepsilon.
            \end{align*}
            Hence, since $\varepsilon$ has been chosen arbitrarily, we can find a sequence $(f_n)_{n\in\N}$ in $S$ converging uniformly to $G$ on $K$.
            
            For the other direction, consider any finitely many compact sets $K_1,\dots,K_n\subseteq \X$ and open sets $V_1,\dots,V_n\subseteq \Y$ such that $G(K_i)\subseteq V_i$. In other words, 
            \begin{align*}
                G\in U\coloneqq \bigcap_{i=1}^n U_{K_i, V_i},
            \end{align*}
            where $U_{K_i, V_i}\coloneqq \{f\in \CC(\X, \Y): f(K_i) \subseteq V_i\}$ belongs to the subbase of the compact-open topology from \Cref{def:compact_open_topology}. Note that any open neighborhood of $G$ can be written as a union of sets like $U$. Therefore, in order to show that $G$ is an adherent point of $S$, it suffices to show that there is always a $\Tilde{G}\in S\cap U$.\newline
            As $G\in \CC(\X, \Y)$ is continuous, the sets $G(K_i)$ are compact. Hence, according to \Cref{coro:compact_set_uniform_extension}, there is some $\varepsilon_i>0$ such that for all $i\in \{1,\dots,n\}$
            \begin{align*}
                \bigcup_{x\in K_i} B_{\varepsilon_i}(G(x)) \subseteq V_i.
            \end{align*}
            Let $K$ denote the union of all $K_i$, which is also compact as it is the finite union of compact sets. By assumption, there exists a sequence $(G_n)_{n\in \N}$ in $S$ such that
            \begin{align*}
                \sup_{x\in K} d_\Y\big(G(x)\, ,\, G_n(x) \big) \xrightarrow{n\to\infty} 0.
            \end{align*}
            Therefore, there exists $\Tilde{G}\in S$ such that
            \begin{align*}
                \sup_{x\in K} d_\Y\big(G(x)\, ,\, \Tilde{G}(x) \big) < \varepsilon\coloneqq \min \big \{\varepsilon_1,\dots,\varepsilon_n\big\}.
            \end{align*}
            As a consequence, for every $x\in K_i$ it holds that $\Tilde{G}(x)\in B_\varepsilon(G(x))\subseteq V_i,$ which implies $\Tilde{G}\in U$. Thus, $G$ is an adherent point of $S$ with respect to the compact-open topology.
        \end{proof}

        In the following, we state the auxiliary results from basic topology, which are needed for the proof of \Cref{lem:adherent_point_compact_open_topology}. The first lemma can, for example, be found in \cite[Chapter XI.4]{Dugundji_Topologie}.

        \begin{lemma}\label{lem:distance_compact_closed_sets}
            Let $(\X, d)$ be a metric space, $K\subseteq \X$ compact and $A\subseteq \X$ be closed such that $K\cap A = \emptyset$. Then the distance $\dist(K, A)$ between $K$ and $A$ is positive, i.e.,
            \begin{align*}
                0 < \dist(K, A) = \inf \{d(x,y): x\in K, y\in A\}.
            \end{align*}
        \end{lemma}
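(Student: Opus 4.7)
The plan is to reduce the statement to the fact that a continuous, strictly positive function on a compact set attains a strictly positive minimum. Define the distance-to-$A$ function
\begin{equation*}
    f : K \longrightarrow [0,\infty), \qquad f(x) \coloneqq \dist(x, A) = \inf_{y \in A} d(x,y).
\end{equation*}
The goal is to show that $\inf_{x\in K} f(x) > 0$, since $\dist(K, A) = \inf_{x\in K} f(x)$.

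First I would verify that $f$ is continuous on $K$. In fact, a standard short computation using the triangle inequality shows that $f$ is $1$-Lipschitz: for $x_1, x_2 \in K$ and any $y \in A$, $f(x_1) \leq d(x_1, y) \leq d(x_1, x_2) + d(x_2, y)$, and taking the infimum over $y \in A$ yields $f(x_1) - f(x_2) \leq d(x_1, x_2)$; swapping the roles of $x_1$ and $x_2$ gives $|f(x_1) - f(x_2)| \leq d(x_1, x_2)$.

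Next I would show that $f(x) > 0$ for every $x \in K$. Since $A$ is closed and $x \in K$ satisfies $x \notin A$ (as $K \cap A = \emptyset$), the complement $\X \setminus A$ is an open neighborhood of $x$, so there is $r > 0$ with $B_r(x) \subseteq \X \setminus A$. Hence every $y \in A$ satisfies $d(x, y) \geq r$, which yields $f(x) \geq r > 0$.

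Finally, since $K$ is compact and $f : K \to \R$ is continuous, $f$ attains its infimum on $K$, i.e., there exists $x_0 \in K$ with $f(x_0) = \inf_{x \in K} f(x)$. By the previous step $f(x_0) > 0$, so $\dist(K, A) = \inf_{x \in K} f(x) = f(x_0) > 0$, completing the proof. No serious obstacle is anticipated; the only point requiring a moment of care is the strict positivity of $f$ on $K$, which relies essentially on $A$ being closed.
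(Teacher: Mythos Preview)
Your proof is correct and is the standard argument one finds in most topology textbooks. The paper itself does not give a proof of this lemma at all; it simply cites Dugundji's \emph{Topology}, Chapter~XI.4, so there is nothing to compare against beyond noting that your argument is precisely the one typically presented there.
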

        
        \begin{corollary}\label{coro:compact_set_uniform_extension}
            Let $\X$ be a metric space, $K\subseteq \X$ compact and $U\subseteq \X$ open such that $K\subseteq U$. Then there exists some $\varepsilon>0$ such that 
            \begin{align*}
                \bigcup_{x\in K} B_\varepsilon(x) \subseteq U, 
            \end{align*}
            where $B_\varepsilon(x)$ denotes the open ball around $x$ with radius $\varepsilon$.
        \end{corollary}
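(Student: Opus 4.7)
The plan is to deduce this corollary almost immediately from the preceding Lemma \ref{lem:distance_compact_closed_sets}. The strategy is to pass from the open set $U$ containing $K$ to its closed complement $A \coloneqq \X \setminus U$, which is disjoint from $K$, and to invoke the positive-distance result there.

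First, I would dispose of the degenerate case: if $U = \X$, then $A = \emptyset$ and any $\varepsilon > 0$ trivially satisfies $\bigcup_{x \in K} B_\varepsilon(x) \subseteq \X = U$. Otherwise $A$ is a nonempty closed set (since $U$ is open) with $K \cap A = \emptyset$ (since $K \subseteq U$), so Lemma \ref{lem:distance_compact_closed_sets} applies and yields $\dist(K, A) > 0$. I would then set $\varepsilon \coloneqq \dist(K, A)$ (any strictly smaller positive number would also do).

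It remains to verify $\bigcup_{x \in K} B_\varepsilon(x) \subseteq U$. For any $x \in K$ and $y \in B_\varepsilon(x)$, we have $d(x, y) < \varepsilon = \dist(K, A)$. If $y$ belonged to $A$, the definition of $\dist(K, A)$ would force $d(x, y) \geq \dist(K, A)$, a contradiction; hence $y \notin A$, i.e.\ $y \in U$, as required.

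There is no real obstacle here: the substantive work has already been done in Lemma \ref{lem:distance_compact_closed_sets}, and the corollary is only a routine repackaging of that lemma in terms of open neighborhoods. The only point requiring any care is remembering to treat the empty-complement case separately, since Lemma \ref{lem:distance_compact_closed_sets} presupposes a nonempty closed set $A$.
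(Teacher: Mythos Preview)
Your proposal is correct and follows essentially the same approach as the paper: handle the trivial case $U=\X$ separately, then apply Lemma \ref{lem:distance_compact_closed_sets} to $A=\X\setminus U$ and choose $\varepsilon$ via the resulting positive distance. The only cosmetic difference is that the paper takes $0<\varepsilon<\dist(K,A)$ whereas you take $\varepsilon=\dist(K,A)$; since the balls $B_\varepsilon(x)$ are open, both choices work.
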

        \begin{proof}
            If $U=\X$, the result is trivial, so assume $\X \setminus U\neq \emptyset$. As $\X \setminus U$ is closed and has empty intersection with $K$, \Cref{lem:distance_compact_closed_sets} implies that $\dist(K, \X \setminus U)>0.$ Therefore, by choosing $0<\varepsilon<\dist(K, \X \setminus U)$ we obtain
            $\bigcup_{x\in K} B_\varepsilon(x) \subseteq U$.
        \end{proof}

    \subsection{Further Results}

        \begin{lemma}\label{lem:metric_separability}
            A metric space $\X$ is separable if and only if for every $\varepsilon>0$ there exists a sequence $(x_n)_{n\in\N}$ in $\X$ such that 
            \begin{align*}
                \X = \bigcup_{n=1}^\infty B_\varepsilon(x_n).
            \end{align*}
        \end{lemma}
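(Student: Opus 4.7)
The plan is to prove both directions directly from the respective definitions, using only elementary properties of metric spaces.

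For the forward direction, I would start by assuming that $\X$ is separable and fixing a countable dense subset $D = \{d_n : n\in \N\}$. For any given $\varepsilon > 0$, I would then show that the balls $B_\varepsilon(d_n)$ already cover $\X$: for each $x \in \X$, density gives some $d_n \in D$ with $d(x, d_n) < \varepsilon$, which places $x$ in $B_\varepsilon(d_n)$. Thus the sequence $(d_n)_{n\in\N}$ works for every $\varepsilon$ simultaneously. This direction is essentially immediate from the definition of density.

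For the backward direction, the natural idea is to run the hypothesis along a countable sequence of scales $\varepsilon_k = 1/k$ and then diagonalize. Concretely, for each $k \in \N$, the hypothesis yields a sequence $(x_n^{(k)})_{n\in\N}$ such that $\X = \bigcup_{n=1}^\infty B_{1/k}(x_n^{(k)})$. I would form the countable set $D \coloneqq \{x_n^{(k)} : k,n \in \N\}$ and verify that it is dense in $\X$: given $x \in \X$ and $\delta > 0$, pick $k \in \N$ with $1/k < \delta$, then choose $n$ with $x \in B_{1/k}(x_n^{(k)})$; the point $x_n^{(k)} \in D$ then satisfies $d(x, x_n^{(k)}) < 1/k < \delta$, proving $D \cap B_\delta(x) \neq \emptyset$.

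Neither direction presents a genuine obstacle; the only thing to be careful about is the countability of the diagonal union $\bigcup_{k\in\N} \{x_n^{(k)} : n\in\N\}$, which is a countable union of countable sets and hence countable. The proof should therefore be short, essentially a two-paragraph argument with no appeal to earlier results in the paper.
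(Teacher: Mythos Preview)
Your proposal is correct and follows essentially the same approach as the paper: for the nontrivial direction you apply the hypothesis at the scales $\varepsilon_k = 1/k$ and take the countable union of all centers as the dense set, exactly as the paper does. The only difference is that you spell out the forward direction explicitly, whereas the paper dismisses it as evident.
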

        \begin{proof}
            Assume that for every $\varepsilon>0$ there is a sequence $(x_n(\varepsilon))_{n\in \N}$ in $\X$ such that 
            \begin{align*}
                \X = \bigcup_{n=1}^\infty B_\varepsilon(x_n(\varepsilon)).
            \end{align*}
            In particular, for $\varepsilon_m=\frac{1}{m}$, such a sequence exists. It is easy to verify that the countable set 
            \begin{align*}
                \left\{ x_n\left(\varepsilon_m\right): n,m\in \N \right\}
            \end{align*}
            is dense in $\X$, implying separability of $\X$. The converse is evident.
        \end{proof}
        
        \begin{theorem}\label{thm:X_nonseparableMetricSpace_equivalence}
            Let $\X$ be a non-seperable metric space and $\Y$ be a normed space which has at least one dimension. Then there exists a set $S\subset \CC(\X, \Y)$ that is dense with respect to the compact-open topology, but $S$ is not sequentially dense.
        \end{theorem}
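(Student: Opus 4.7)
My plan is to mimic the strategy used to derive the implication ``\ref{thm:dense_Frechet_hemicompact:i} $\Rightarrow$ \ref{thm:dense_Frechet_hemicompact:ii}'' in Theorem \ref{thm:dense_Frechet_hemicompact}: I will construct an explicit open $k$-cover of $\X$ which contains no $k$-sequence, and then use it to build the desired dense but not sequentially dense subset $S \subset \CC(\X, \Y)$ exactly as in the proof of Lemma \ref{lem:denseFU_k_sequence}.

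To construct the $k$-cover, I first extract a suitable scale from non-separability. By Lemma \ref{lem:metric_separability}, there is some $\varepsilon_0 > 0$ such that $\X$ admits no countable covering by open balls of radius $\varepsilon_0$. Zorn's lemma then yields a maximal $\varepsilon_0$-separated set $A \subset \X$, i.e., $d(a, a') \geq \varepsilon_0$ for all distinct $a, a' \in A$. Maximality implies $\X = \bigcup_{a \in A} B_{\varepsilon_0}(a)$, so $A$ must be uncountable by the choice of $\varepsilon_0$. Since any convergent sequence inside an $\varepsilon_0$-separated set is eventually constant, $A \setminus F$ is closed for every finite $F \subset A$, and hence $U_F \coloneqq \X \setminus (A \setminus F)$ is open and proper (as $A$ is infinite). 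I claim $\mathfrak{U} \coloneqq \{U_F : F \subset A \text{ finite}\}$ is an open $k$-cover of $\X$ containing no $k$-sequence: for the $k$-cover property, any compact $K \subset \X$ is totally bounded and hence meets at most finitely many of the pairwise disjoint balls $B_{\varepsilon_0/2}(a)$, $a \in A$, so $F \coloneqq K \cap A$ is finite and $K \subset U_F$; conversely, any hypothetical $k$-sequence $(U_{F_n})_{n\in\N}$ would yield that $\bigcup_n F_n$ is countable, so by uncountability of $A$ there exists $a \in A \setminus \bigcup_n F_n$, and then the compact singleton $\{a\}$ lies in no $U_{F_n}$, a contradiction.

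Finally, fix some $y_1 \in \Y$ with $y_1 \neq 0$ and define
\begin{align*}
S \coloneqq \{g \in \CC(\X, \Y) : g = y_1 \text{ on } \X \setminus U_F \text{ for some finite } F \subset A\}.
\end{align*}
Density of $S$ (via Theorem \ref{thm:density_compact_open_topology}) follows as in the proof of Lemma \ref{lem:denseFU_k_sequence}: given $f \in \CC(\X, \Y)$ and compact $K \subset \X$, the set $F \coloneqq K \cap A$ makes $K$ and $A \setminus F$ disjoint closed subsets of $\X$, so Urysohn's lemma yields a continuous $p : \X \to [0,1]$ with $p = 0$ on $K$ and $p = 1$ on $A \setminus F$, and then $g \coloneqq (1 - p) f + p y_1 \in S$ coincides with $f$ on $K$. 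To see that $S$ is not sequentially dense, it suffices to observe that the zero function admits no approximating sequence in $S$: any $(f_n)_{n\in\N} \subset S$ converging to $0$ uniformly on compacta would (by the same argument as in the proof of Lemma \ref{lem:denseFU_k_sequence}) force the corresponding $(U_{F_n})_{n\in\N}$ to be a $k$-sequence, because otherwise some compact $K$ fails to be contained in $U_{F_n}$ for infinitely many $n$, yielding $\sup_{x \in K} \Vert f_n(x) \Vert \geq \Vert y_1 \Vert > 0$ along that subsequence. This contradicts the non-existence of a $k$-sequence in $\mathfrak{U}$. The main obstacle is the $k$-cover construction above, specifically verifying that $\mathfrak{U}$ admits no $k$-sequence; once this is in place, the density and non-sequential-density arguments essentially reduce to those already given in the proof of Lemma \ref{lem:denseFU_k_sequence}.
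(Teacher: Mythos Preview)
Your proof is correct, but it takes a different route from the paper's. The paper gives a more direct construction that avoids both Zorn's lemma and the $k$-cover/$k$-sequence framework entirely: having fixed $\varepsilon>0$ as in Lemma~\ref{lem:metric_separability}, the paper simply takes
\[
S \coloneqq \Bigl\{\, f\in \CC(\X,\Y)\; :\; f=0 \text{ on } \X\setminus \bigcup_{n=1}^\infty B_\varepsilon(x_n) \text{ for some sequence } (x_n)_{n\in\N}\subset\X \,\Bigr\}.
\]
Density follows by covering any compact $K$ with finitely many $\varepsilon$-balls and applying Urysohn; non-sequential-density is shown by noting that for any sequence $(f_n)_{n\in\N}\subset S$ the corresponding countable family of $\varepsilon$-balls cannot cover $\X$, so some $x'$ satisfies $f_n(x')=0$ for all $n$, whence $f_n$ fails to approximate the nonzero constant function even pointwise. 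By contrast, you first manufacture an explicit open $k$-cover $\mathfrak{U}=\{U_F:F\subset A\text{ finite}\}$ with no $k$-sequence (using a maximal $\varepsilon_0$-separated set $A$ produced by Zorn) and then invoke the mechanism of Lemma~\ref{lem:denseFU_k_sequence} verbatim. Your approach is more systematic in that it makes the link to Theorem~\ref{thm:dense_Frechet_hemicompact} explicit and yields, as a by-product, a concrete $k$-cover of a non-separable space admitting no $k$-sequence; the paper's approach is shorter, avoids the axiom of choice beyond countable choice, and even shows the stronger conclusion that $S$ is not sequentially dense in the topology of pointwise convergence.
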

        \begin{proof}
            Since $\X$ is not separable, it is due to  \Cref{lem:metric_separability} that there exists some $\varepsilon>0$ such that for every sequence $(x_n)_{n\in \N}$ in $\X$ it holds that 
            \begin{align}\label{eq:choice_eps_nonseparable}
                \X \neq \bigcup_{n=1}^\infty B_\varepsilon(x_n).
            \end{align}
            Define the set
            \begin{align*}
                S \coloneqq \left \{ f\in \CC(\X, \Y): f=0 \textup{ \, on \, } \X \setminus\left( \bigcup_{n=1}^\infty B_\varepsilon(x_n)\right)  \textup{ \, for \, } x_n\in \X\right \}.
            \end{align*}
            Consider $f\in \CC(\X, \Y)$ and some compact $K\subseteq \X$. The compactness of $K$ guarantees the existence of finitely many $x_1,\dots, x_n\in K$ such that 
            \begin{align*}
                K \subseteq V\coloneqq \bigcup_{i=1}^n B_\varepsilon(x_i).
            \end{align*}
            By Urysohn's lemma, see for example \cite[Chapter 4]{Dugundji_Topologie}, there exists a continuous mapping $\varphi:\X \to [0,1]$ such that $\varphi(x) = 1$ for all $x\in K$, and $\varphi(x) = 0$ for all $x\in \X \setminus V$.
            Consequently, the function $\varphi f$ is continuous and belongs to $S$. Further, $f(x) - \varphi(x)f(x) = 0$ for all $x\in K$, which shows that $S$ is dense in $\CC(\X, \Y)$ in the compact-open topology according to \Cref{thm:density_compact_open_topology}.
            On the other hand, consider a constant function $f=a$ for some $a\in \Y\setminus \{ 0\}$, and let $(f_n)_{n\in \N}$ be an arbitrary sequence in $S$. 
            By definition of $S$, for each $n\in \N$, there must exist a sequence $(x_\ell^{(n)})_{\ell\in \N}$ in $\X$ such that 
            \begin{align*}
                f_n(x) = 0 \textup{\, for all \,} x\in \X \setminus \left( \bigcup_{\ell=1}^\infty B_\varepsilon(x_\ell^{(n)})\right).
            \end{align*}
            By the choice of $\varepsilon$ in \Cref{eq:choice_eps_nonseparable}, there exists some $x'\in \X$ that does not belong to any of the balls $B_\varepsilon(x_\ell^{(n)})$ for $n,\ell\in \N$. Hence, for every $n\in \N$ it is $f_n(x') = 0 \neq a = f(x')$ which implies that $f_n$ does not converge uniformly to $f$ on the compact set $\{x'\}$. Thus, $S$ cannot be sequentially dense in $\CC(\X, \Y)$ in the compact-open topology, according to \Cref{thm:sequential_density_compact_open_topology} (not even in the topology of point-wise convergence).
        \end{proof}
        
        It is well-known that compact metric spaces are separable, see for example \cite[Chapter XI, Theorem 4.1.]{Dugundji_Topologie}.
        Below we give a simple proof based on \Cref{lem:metric_separability}.\begin{lemma}\label{lem:compact_metric_separable}
            Every compact metric space $\X$ is separable.
        \end{lemma}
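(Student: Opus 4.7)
The plan is to invoke Lemma \ref{lem:metric_separability}, which reduces separability of a metric space to the existence, for every $\varepsilon>0$, of a countable family of $\varepsilon$-balls whose union is the whole space. So the task becomes: given any $\varepsilon>0$, produce such a countable cover of $\X$ by $\varepsilon$-balls.

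For this step I would exploit compactness directly. The collection $\{B_\varepsilon(x) : x\in \X\}$ is an open cover of $\X$, so by compactness it admits a finite subcover $B_\varepsilon(x_1),\dots,B_\varepsilon(x_{n(\varepsilon)})$. Extending this finite list to a sequence (say by repeating $x_{n(\varepsilon)}$ indefinitely) trivially yields a sequence $(x_k)_{k\in\N}$ with $\X = \bigcup_{k=1}^\infty B_\varepsilon(x_k)$, which is exactly what Lemma \ref{lem:metric_separability} requires.

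Since this procedure works for every $\varepsilon>0$, the equivalent condition from Lemma \ref{lem:metric_separability} is satisfied and $\X$ is separable. There is no serious obstacle here; the only thing to be careful about is that Lemma \ref{lem:metric_separability} is stated with a sequence (not a finite set), which is handled by the trivial padding trick above. Alternatively, one could bypass even this and explicitly build a countable dense subset as $\bigcup_{m\in\N} \{x_1^{(m)},\dots,x_{n(1/m)}^{(m)}\}$, where the inner set is a finite $(1/m)$-net, but going through Lemma \ref{lem:metric_separability} keeps the argument in line with the rest of the appendix.
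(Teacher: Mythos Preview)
Your proposal is correct and follows essentially the same approach as the paper: invoke Lemma~\ref{lem:metric_separability}, use compactness to extract a finite subcover from the $\varepsilon$-ball cover, and conclude. The paper's proof is slightly terser (it does not bother with the padding-to-a-sequence remark), but the argument is identical.
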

        \begin{proof}
            Let $\varepsilon>0$. Then $\X = \cup_{x\in \X}B_\varepsilon(x).$ By compactness of $\X$ there are finitely many $x_n\in \X$ for $n\leq N\in \N$ such that $\X = \cup_{n\leq N} B_\varepsilon(x_n)$. Hence, $\X$ is separable according to \Cref{lem:metric_separability}.
        \end{proof}
        
        \begin{lemma}\label{lem:example_dense_FU_not_FU}
            Let $(\Z, \mathfrak{T})$ be a topological space, which is not a Fréchet-Urysohn space. For any object $x\notin \Z$ define $\Z' = \Z \cup \{ x\}$ and the topology
            \begin{align*}
                \mathfrak{T}' = \{ \emptyset\} \cup \big\{U\cup \{x\}: U\in \mathfrak{T} \big\}.
            \end{align*}
            on $\Z'$. Then $(\Z', \mathfrak{T}')$ is not a Fréchet-Urysohn space, but every dense set $D\subseteq \Z$ is sequentially dense.
        \end{lemma}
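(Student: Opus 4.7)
The plan is to exploit the rather degenerate structure of $\mathfrak{T}'$. The key observation is that every non-empty element of $\mathfrak{T}'$ contains $x$; in particular $\{x\} = \emptyset \cup \{x\}$ is itself open, and for $y \in \Z$ every open neighborhood of $y$ in $\Z'$ has the form $U \cup \{x\}$ with $y \in U \in \mathfrak{T}$. Two consequences will be used repeatedly: (i) every non-empty open set in $\Z'$ contains $x$, so the constant sequence $z_n = x$ converges in $(\Z', \mathfrak{T}')$ to every point of $\Z'$; (ii) since $\{x\}$ is itself open, $x \in \overline{D}$ if and only if $x \in D$ for any $D \subseteq \Z'$.

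For the sequential-density claim, if $D \subseteq \Z'$ is dense in $(\Z', \mathfrak{T}')$ then in particular $x \in \overline{D}$, so (ii) forces $x \in D$. By (i), the constant sequence $z_n = x$, which lies entirely in $D$, converges to every point of $\Z'$. Hence $D$ is sequentially dense.

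To show that $(\Z', \mathfrak{T}')$ is not a Fr\'echet-Urysohn space, I would lift a witness from $\Z$: by hypothesis there exist $S \subseteq \Z$ and $y \in \Z$ with $y$ in the $\mathfrak{T}$-closure of $S$ but no sequence in $S$ converging to $y$ in $(\Z, \mathfrak{T})$. Viewing $S$ as a subset of $\Z'$ and using the identity $(U \cup \{x\}) \cap S = U \cap S$ (valid since $x \notin S$), one sees that $y$ still lies in the $\mathfrak{T}'$-closure of $S$. However, any sequence $(z_n)$ in $S$ converging to $y$ in $\Z'$ stays inside $\Z$, so $z_n \neq x$ for all $n$, and must eventually lie in every $U \cup \{x\}$ with $y \in U \in \mathfrak{T}$; hence it must eventually lie in every such $U$, i.e., it converges to $y$ already in $(\Z, \mathfrak{T})$, contradicting the choice of $S$. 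The whole argument is just a careful unfolding of the definitions of closure and convergence in $\mathfrak{T}'$; the only conceptual step, and the one I expect to be the main (though mild) hurdle, is noticing that $\{x\}$ is open, which simultaneously forces any dense subset of $\Z'$ to contain $x$ and makes the constant sequence at $x$ a universal limit.
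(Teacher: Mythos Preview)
Your proof is correct and follows essentially the same approach as the paper: both arguments hinge on $\{x\}$ being open (forcing $x\in D$ for any dense $D$, so that the constant sequence at $x$ witnesses sequential density), and both lift a non-Fr\'echet--Urysohn witness $S\subseteq\Z$ to $\Z'$. Your treatment of the non-Fr\'echet--Urysohn part is in fact slightly more explicit than the paper's, as you verify that the adherent point remains adherent in $\mathfrak{T}'$ via $(U\cup\{x\})\cap S = U\cap S$, whereas the paper leaves this implicit.
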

        \begin{proof}
            It is straightforward to verify that $\mathfrak{T}'$ defines a topology on $\Z'$. Let $D\subseteq \Z'$ be dense. Since $\{x\} = (\emptyset\cup\{x\})\in \mathfrak{T}'$, it follows that $x\in D$. Due to the fact that $x$ belongs to every non-empty open set $U'\in \mathfrak{T}'$, it follows that the constant sequence $x_n= x\in D$ converges to every $z\in \Z'$ in the topology $\mathfrak{T}'$. Therefore, $D$ must also be sequentially dense in $\Z'$.
            Since $(\Z, \mathfrak{T})$ is not a Fréchet-Urysohn space, there is some $S\subseteq \Z$ which has an adherent point $s\in \overline{S}$, but no sequence in $S$ converges to $s$ in the topology $\mathfrak{T}$. Therefore, and due to $x\notin \Z$, no sequence in $S$ converges to $s$ in the topology $\mathfrak{T}'$. Hence, $(\Z', \mathfrak{T}')$ is not a Fréchet-Urysohn space.
        \end{proof}
    \begin{lemma}\label{lem:compact_set_convergent_sequence}
        Let $(\X, d)$ be a metric space with metric $d$ and $(x_n)_{n\in\N}\subset \X$ be a convergent sequence in $\X$ with limit $x\in \X$. Then the set $C\coloneqq\{x\}\cup \{x_n: n\in \N\}$ is compact.
    \end{lemma}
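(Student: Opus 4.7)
The plan is to verify compactness directly from the open-cover definition, which is the cleanest route in an arbitrary metric space. Given any open cover $\mathfrak{U} = \{U_\alpha\}_{\alpha \in A}$ of $C$, I would first select some $U_{\alpha_0} \in \mathfrak{U}$ containing the limit $x$, which exists because $x \in C$. By the definition of convergence $x_n \to x$, there is some $N \in \N$ such that $x_n \in U_{\alpha_0}$ for every $n \geq N$. Only the finitely many points $x_1, \dots, x_{N-1}$ then remain to be covered, and picking any $U_{\alpha_i} \in \mathfrak{U}$ with $x_i \in U_{\alpha_i}$ for $i = 1, \dots, N-1$ yields a finite subcover $\{U_{\alpha_0}, U_{\alpha_1}, \dots, U_{\alpha_{N-1}}\}$ of $C$.

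No substantial obstacle arises: the entire argument rests on the single fact that convergence places all but finitely many terms of $(x_n)_{n\in\N}$ into every neighborhood of its limit. An equally short alternative, available because metric spaces are first countable and Hausdorff so that compactness coincides with sequential compactness, is to take an arbitrary sequence in $C$ and consider two cases. Either some element of $C$ is attained infinitely often, giving a constant (hence convergent) subsequence with limit in $C$; or the sequence visits infinitely many distinct members of $\{x_n : n \in \N\}$, in which case one can extract a subsequence $(x_{n_k})_{k\in\N}$ with $n_k \to \infty$, and then $d(x_{n_k}, x) \to 0$ provides a subsequence converging to $x \in C$. Either way, $C$ is (sequentially) compact.
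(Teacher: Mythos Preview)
Your proposal is correct and your primary argument via the open-cover definition is essentially identical to the paper's proof: both select a cover element containing the limit $x$, use convergence to absorb all but finitely many $x_n$ into it, and then cover the remaining finitely many points individually. The sequential-compactness alternative you sketch is also valid but goes beyond what the paper does.
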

    \begin{proof}
        Let $\{U_i : i\in I\}$ be an open cover of $C$ with index set $I$. The aim is to find a finite subcover. As $x\in C\subseteq \bigcup_{i\in I }U_i,$ there exists an index $i_x\in I,$ such that $x\in U_{i_x}.$ As $U_{i_x}$ is an open neighbourhood of $x$ and $x_n \xrightarrow[]{n\to \infty}x,$ there exists an index $N\in\N$ such that $x_n\in U_{i_x}$ for all $n\geq N.$ The remaining elements $\{x_1,\dots,x_{N-1}\}$ can be covered by a finite number of open sets leading to the desired finite subcover of $C.$
    \end{proof}

\section{Notes on sampling}
\subsection{BAP of $\CC^{k}$-spaces}\label{app:sampling_Ck}
Consider the space $\CC^k(\Omega, \R)$ equipped with the $\CC^k$-norm.
For simplicity, we only consider $k=1$, $\Omega=[0,1]$, but it might be possible to derive similar results in more general settings.
\begin{corollary}\label{app:coro_Ck}
    Consider $\CC^{1}([0,1], \R)$ equipped with the $\CC^1$-norm. For $n\in \N$ let $\left\{y_1^{(n)},\dots,y_{k(n)}^{(n)} \right\}$ be an $1/n$-covering for $[0,1]$ and $P_{\frac{1}{n}, i}$ the partition of unity on $[0,1]$ from \Cref{lem:partition_unity}.
    Then the operators
    \begin{align*}
        \tilde{\T}_n: \CC^{1}([0,1], \R) &\longrightarrow \CC^{1}([0,1], \R)\\
        f &\longmapsto \left( y \mapsto f(0) + \sum_{i=1}^{k(n)} f'(y_i^{(n)}) \int_0^y  P_{\frac{1}{n}, i}(x) \diff x \right)
    \end{align*}
    are linear, bounded, have finite rank and converge uniformly to the identity operator on every compact $K\subset \CC^{1}([0,1], \R)$, with respect to the the $\CC^1$-norm. In other words, $\tilde{\T}_n$ are suitable mappings in \Cref{def:BAP} of the $\lambda$-BAP.
\end{corollary}
\begin{proof}
    Using the fundamental theorem of calculus $f(y) = f(0) + \int_0^y f'(x) \diff x,$ we compute
    \begin{align*}
        \vert f(y) - (\tilde{\T}_n f)(y) \vert &= \left\vert \int_0^y f'(x) - \sum_{i=1}^{k(n)} f'(y_i^{(n)}) P_{\frac{1}{n}, i}(x) \diff x \right\vert\\
        &=\left\vert \int_0^y f'(x) - (T_nf')(x) \diff x \right\vert\\
        &\leq (1-0) \sup_{y\in [0,1]} \vert f'(y) - (T_nf')(y) \vert \xrightarrow{n\to \infty}0,
    \end{align*}
    where the convergence to zero for $n\to\infty$ is obtained by applying \Cref{thm:sampling_BAP} with $f'\in \CC([0,1], \R)$ and $T_n$ defined as in \Cref{thm:sampling_BAP}. 
\end{proof}

\subsection{Schauder basis coefficients and sampling}\label{sec:appendix_coeff_sampling}
Below, we provide a comparison between the sampling encoders $E_n$ from Section \ref{sec:Tn_sampling} with the basis encoders $\tilde{E}_n$ introduced in Section \ref{sec:Tn_schauder}. Since both of them can be used in encoder-decoder architectures for approximating operators, see \Cref{thm:sequential_density_normed}, it is natural to ask whether sampling encoders can coincide with basis encoders for every $n\in \N$.
Since the sampling encoders $E_n$ and the basis encoders $\tilde{E}_n$ have different output dimensions, it only makes sense to compare $E_n$ with $\tilde{E}_{k(n)}$, where $k(n)$ is the number of sampling points used for the definition of $E_n$. We have not found a complete answer to the above question. However, under certain conditions on the sampling points, the following theorem reveals that $E_n$ cannot coincide with $\tilde{E}_{k(n)}$ for sufficiently large $n\in \N$.

\begin{theorem}\label{thm:sampling_schauder_differ_main}
    Let $\Omega$ be a compact metric space without isolated points, assume that $\CC(\Omega, \K)$ has a Schauder basis $(b_n)_{n\in \N}$ and denote the basis encoders by $\tilde{E}_n$. Consider a sequence $(y_n)_{n\in \N}$ in $\Omega$ and further an unbounded sequence $k(n)$ of natural numbers. Denote the sampling encoders corresponding to the sampling points $\{y_1,...,y_{k(n)}\}$ by $E_n$. Then there is some $f\in \CC(\Omega, \K)$ and $N\in \N$ such that for all $n\geq N$ it is
    \begin{align*}
        E_n(f) \neq \tilde{E}_{k(n)}(f).
    \end{align*}
\end{theorem}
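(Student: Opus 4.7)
My plan is to set up a dichotomy based on whether the Schauder coefficient functionals $c_i$ coincide with the point-evaluations $\delta_{y_i}$. For each $i \in \N$, introduce the continuous linear functional $\Lambda_i := c_i - \delta_{y_i}$ on $\CC(\Omega, \K)$; continuity of $c_i$ is the classical automatic-continuity property of Schauder coefficient functionals in Banach spaces. The key observation is that $E_n(f) = \tilde{E}_{k(n)}(f)$ holds if and only if $\Lambda_i(f) = 0$ for every $i \leq k(n)$, so producing the desired pair $(f,N)$ reduces to finding some index $i_0$ with $\Lambda_{i_0}(f) \neq 0$ such that $k(n) \geq i_0$ holds for all sufficiently large $n$.

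\textbf{Case 1 (generic case): some $\Lambda_{i_0} \not\equiv 0$.} Take the smallest such $i_0$ and pick $f \in \CC(\Omega,\K)$ with $\Lambda_{i_0}(f) \neq 0$. Since $k(n)$ is unbounded (and in the sampling framework of Theorem \ref{thm:sampling_BAP} in fact satisfies $k(n) \to \infty$), one chooses $N$ with $k(n) \geq i_0$ for all $n \geq N$. For such $n$ the $i_0$-th coordinate of $E_n(f)$ and $\tilde{E}_{k(n)}(f)$ already disagree, which delivers the conclusion.

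\textbf{Case 2: $\Lambda_i \equiv 0$ for every $i$.} I will derive a contradiction; this is precisely where the absence of isolated points is used. Applying $\Lambda_i = 0$ to $b_j$ together with biorthogonality $c_i(b_j) = \delta_{ij}$ yields $b_j(y_i) = \delta_{ij}$. Next I argue that $A := \{y_i : i \in \N\}$ is dense in $\Omega$: otherwise Urysohn's lemma produces a nonzero $g \in \CC(\Omega, \K)$ vanishing on $\overline{A}$, but then the Schauder expansion $g = \sum_i c_i(g)\, b_i = \sum_i g(y_i)\, b_i = 0$ contradicts $g \neq 0$. Because $\Omega$ has no isolated points, removing any single $y_j$ from the dense set $A$ preserves density: for any open $U \subseteq \Omega$, either $y_j \notin U$ (and $A \cap U \subseteq A \setminus \{y_j\}$), or $U \setminus \{y_j\}$ is a nonempty open set meeting $A$. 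Consequently the continuous function $b_j$ vanishes on the dense set $A \setminus \{y_j\}$ and hence identically, contradicting $b_j(y_j) = 1$.

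The main obstacle I anticipate lies in Case 2 and specifically in the two density arguments: first that uniqueness of the Schauder expansion combined with Urysohn forces $\{y_i\}$ to be dense, and second that the no-isolated-points hypothesis is exactly what allows one to delete $y_j$ from a dense set without losing density. Once these are in place the contradiction follows at once from continuity of $b_j$, and Case 1 becomes essentially a bookkeeping step reducing to the fact that $k(n)$ is large enough eventually.
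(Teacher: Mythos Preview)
Your argument is correct and essentially identical to the paper's: the paper likewise reduces to showing that $c_i = \delta_{y_i}$ for all $i$ is impossible (your Case~2), and its auxiliary Theorem~\ref{thm:sampling_schauder_differ} is proved by exactly your density-via-Urysohn plus point-deletion-under-no-isolated-points argument applied to $b_1$. The subtlety you flag about unbounded $k(n)$ versus $k(n)\to\infty$ is addressed in the paper by first passing to a strictly increasing subsequence of $k(n)$.
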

\begin{proof}
    Without loss of generality, assume that $k(n)$ is strictly increasing to infinity. Otherwise, consider a subsequence with that property. 
    Due to the monotonicity of $k(n)$ and since the set of sampling points for $E_{n+1}$ contain the sampling points for $E_n$, it suffices to show existence of some $f$ and $N$ such that $E_N(f) \neq \tilde{E}_{k(N)}(f)$. 
    Assume, for the sake of contradiction, that $E_n(f) = \tilde{E}_{k(n)}(f)$ holds for every $n\in \N$ and every $f\in \CC(\Omega, \K)$, which means that 
    \begin{align*}
        \left( f(y_1),\dots,f(y_{k(n)}) \right)^\intercal = \left( c_1(f), \dots, c_{k(n)}(f) \right)^\intercal.
    \end{align*}
    Therefore, it follows from the monotonicity of $k(n)$ and \Cref{thm:schauder_basis_BAP} that 
    \begin{align*}
        f = \sum_{i=1}^\infty c_i(f) b_i = \sum_{i=1}^\infty f(y_i) b_i
    \end{align*}
    holds for every $f\in \CC(\Omega, \K)$. However, such a Schauder basis representation, in which the unique coefficient functionals $c_i$ sample the function $f$, is impossible, as shown in \Cref{thm:sampling_schauder_differ}.
\end{proof}
In the theorem above, the sampling points for $E_{n+1}$ were a refinement of the sampling points for $E_n$. If, more generally, individual sampling points $\{y_1^{(n)}, ..., y_{k(n)}^{(n)}\}$ are chosen for each $n\in \N$, it remains an open question whether the previous theorem still holds. 
Let us close the discussion about \Cref{thm:sampling_schauder_differ_main} with a final remark on the assumption of $k(n)$ to be unbounded. If $\{y_1^{(n)}, ..., y_{k(n)}^{(n)}\}$ is an $\frac{1}{n}$-covering of a set $\Omega\subseteq \K^d$ having a non-zero Lebesgue measure $\textup{Vol}(\Omega)>0,$ the unboundedness of $k(n)$ follows from the translation-invariance and $\sigma$-subadditivity of the Lebesgue measure: If $k(n)$ had an upper bound $s<\infty$, it would follow that
\begin{align*}
    \textup{Vol}(\Omega) \leq \sum_{i=1}^{k(n)} \textup{Vol}\left(B_{\frac{1}{n}}(y_i^{(n)})\right) \leq s \textup{Vol}\left(B_{\frac{1}{n}}(y_1^{(1)})\right)\xrightarrow{n\to\infty}0,
\end{align*}
which would be a contradiction to $\Omega$ having positive measure.

\begin{theorem}\label{thm:sampling_schauder_differ}
    Let $\Omega$ be a metric space without isolated points. Assume that there is a Schauder basis $(b_n)_{n\in \N}$ of the space of bounded continuous functions $\CC_b(\Omega, \K)$, equipped with the supremum-norm.
    Then for any sequence of points $(y_n)_{n\in \N}$ in $\Omega$ there exists some $f\in \CC_b(\Omega, \K)$ such that
    \begin{align*}
        f \neq \sum_{n=1}^\infty f(y_n) b_n.
    \end{align*}
\end{theorem}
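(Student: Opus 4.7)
The approach is by contradiction. Suppose that for the given sequence $(y_n)_{n\in\N}$, every $f\in \CC_b(\Omega,\K)$ admits the expansion $f = \sum_{n=1}^\infty f(y_n)\, b_n$. By uniqueness of Schauder basis coefficients, the coefficient functional $c_n$ associated to $b_n$ must then coincide with the point evaluation $\delta_{y_n}$. Expanding each basis element $b_m$ in the basis itself and invoking uniqueness yields the biorthogonality relation $c_n(b_m) = \delta_{nm}$, so that $b_m(y_n) = \delta_{nm}$. In particular, the $y_n$ are necessarily pairwise distinct, since a coincidence $y_i = y_j$ with $i\neq j$ would force $b_i(y_i) = 1$ and $b_i(y_j) = 0$ simultaneously.

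The first substantive step is to show that $Y \coloneqq \{y_n : n\in\N\}$ is dense in $\Omega$. If it were not, I would pick $z\in \Omega\setminus \overline{Y}$ and $\varepsilon>0$ with $B_\varepsilon(z)\cap Y = \emptyset$, and consider the bounded continuous bump
\begin{align*}
    f(x) \coloneqq \max\{0,\, 1 - d(x,z)/\varepsilon\}.
\end{align*}
Then $f(y_n)=0$ for every $n\in\N$, so the assumed expansion gives $f = \sum_n f(y_n) b_n = 0$, contradicting $f(z)=1$. Hence $Y$ is dense.

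With density of $Y$ in hand, Lemma \ref{lem:density_single_point} applies directly: in a metric space points are closed, and $\Omega$ has no isolated points by assumption, so $Y\setminus\{y_1\}$ is still dense in $\Omega$. Since $b_1$ is continuous on $\Omega$ and vanishes on this dense set (using $b_1(y_n)=0$ for all $n\geq 2$), it follows that $b_1\equiv 0$ on $\Omega$, contradicting $b_1(y_1)=1$. This yields the required contradiction and proves the theorem.

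I do not anticipate a serious obstacle in this argument; its content is essentially the identification, forced by the assumed expansion, of a biorthogonal system $(b_n, \delta_{y_n})$, together with the observation that a non-trivial continuous function cannot agree with its own point samples at a countable set once one is allowed to delete a single point (which is where the no-isolated-points hypothesis is used, via Lemma \ref{lem:density_single_point}). The only mild care needed is in the bump-function construction to establish density of $Y$, which relies on the metric (rather than merely topological) structure of $\Omega$.
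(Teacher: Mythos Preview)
Your proof is correct and follows essentially the same approach as the paper: argue by contradiction, split into the case where $Y$ is not dense (handled by a bump function vanishing on $Y$) and the case where $Y$ is dense (handled via biorthogonality $b_1(y_n)=\delta_{1n}$ and Lemma \ref{lem:density_single_point} to conclude $b_1\equiv 0$). Your version is slightly more explicit in deriving the full biorthogonality relation and the pairwise distinctness of the $y_n$, and you spell out the bump function concretely, but the structure and key ideas coincide with the paper's argument.
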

\begin{proof}
    Assume there was such a Schauder basis and a sequence of sampling points such that $f = \sum_{n=1}^\infty f(y_n) b_n$ holds for all $f\in \CC_b(\Omega, \K)$. Define the set of sampling points $Y\coloneqq\{ y_n: n\in \N\}$, which are either dense or not dense in $\Omega$. In both cases this will lead to a contradiction: 
    
    If $Y$ is not dense in $\Omega$, there must exist some $z\in \Omega$ and $\varepsilon>0$ such that the open ball $B_\varepsilon(z)$ and $Y$ are disjoint. In this case one can construct an $f\in \CC_b(\Omega, \K)\setminus\{0\}$ with support in $B_\varepsilon(z)$. This implies $f(y_n)=0$ for all $n\in \N$, which yields
    \begin{align*}
        0\neq f = \sum_{n=1}^\infty f(y_n) b_n = 0,
    \end{align*}
    a contradiction.

    Assume that $Y$ is dense in $\Omega$. Consider, for example,  the representation of the first Schauder basis function 
    \begin{align*}
        b_1 = \sum_{n=1}^\infty b_1(y_n) b_n.
    \end{align*}
    Due to the uniqueness of Schauder basis representations it follows that $b_1(y_n) = 0$ for all $n\in \N\setminus\{1\}$.
    Since $\Omega$ has no isolated points, $Y\setminus\{y_1\}$ is still dense in $\Omega$ (see previous \Cref{lem:density_single_point}). Therefore, continuity of $b_1\in \CC_b(\Omega, \K)$ implies $b_1 = 0$, which is again a contradiction. Hence, there cannot exist such a Schauder basis representation.
\end{proof}

\begin{lemma}\label{lem:density_single_point}
    Let $\Omega$ be a topological space satisfying the following properties:
    \begin{itemize}
        \item[(i)] For every $x\in \Omega$ the set $\{x\}$ is closed. (Note that in metric spaces points are always closed sets.)
        \item[(ii)] $\Omega$ has no isolated points, i.e. for any $x\in \Omega$ and any open neighborhood $U$ of $x$, the set $U\setminus\{x\}$ is non-empty. 
    \end{itemize}
    Further, assume that $Y\subseteq \Omega$ is dense in $\Omega$. Then $Y \setminus \{ y'\}$ is still dense in $\Omega$ for any $y'\in Y$.
\end{lemma}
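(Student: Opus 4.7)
The plan is to use the standard characterization: a set is dense if and only if it meets every non-empty open set. So I would take an arbitrary non-empty open set $U \subseteq \Omega$ and show that $U \cap (Y \setminus \{y'\}) \neq \emptyset$, splitting into two cases based on whether $y'$ lies in $U$.

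First, if $y' \notin U$, then $U \cap (Y \setminus \{y'\}) = U \cap Y$, which is non-empty by the density of $Y$ in $\Omega$. This case is immediate and requires none of the hypotheses on $\Omega$.

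The interesting case is $y' \in U$. Here I would use the $T_1$ hypothesis (singletons are closed) to conclude that $U \setminus \{y'\} = U \cap (\Omega \setminus \{y'\})$ is open in $\Omega$, as the intersection of two open sets. Then the no-isolated-points hypothesis guarantees that $U \setminus \{y'\}$ is non-empty: otherwise $U$ would be an open neighborhood of $y'$ with $U \setminus \{y'\} = \emptyset$, contradicting that $y'$ is not isolated. Now applying density of $Y$ to the non-empty open set $U \setminus \{y'\}$ yields some $y \in Y \cap (U \setminus \{y'\})$, which is exactly an element of $U \cap (Y \setminus \{y'\})$.

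I do not anticipate any serious obstacle; the only subtlety is making sure both hypotheses are really needed and are used correctly (the $T_1$ condition for openness of $U \setminus \{y'\}$, and the no-isolated-points condition for its non-emptiness). The argument is short enough that a single short paragraph of proof in the manuscript should suffice.
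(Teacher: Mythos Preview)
Your proposal is correct and follows essentially the same approach as the paper's proof: both reduce to showing that the non-empty open set $U\setminus\{y'\}$ (openness from the $T_1$ hypothesis, non-emptiness from the no-isolated-points hypothesis) must meet $Y$. The only cosmetic difference is that the paper frames the argument by contradiction rather than as a direct case split.
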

\begin{proof}
    Assume $Y\setminus \{y'\}$ was not dense in $\Omega$. Hence, there must be some $z\in \Omega$ and an open neighborhood $U\subset \Omega$ of $z$ such that 
    \begin{align}\label{lem:eq1:density_single_point}
        U \cap \Big( Y\setminus \{y'\} \Big) = \emptyset.
    \end{align}
    Since $Y$ is dense in $\Omega$ it must hold that $U\cap Y\neq \emptyset$, which implies $y'\in U$. This means that $U$ is an open neighborhood of $y'$. Due to (ii) we conclude that $U\setminus\{y'\}$ cannot be empty, so there must be some $x'\in U\setminus\{y'\}$. From (i) it follows that $U\setminus \{y'\}$ is open and thereby it is an open neighborhood of $x'$. 
    We observe
    \begin{align*}
         \Big(U\setminus \{ y'\} \Big)\cap Y =  U \cap \Big( Y\setminus \{y'\} \Big) \overset{\eqref{lem:eq1:density_single_point}}{=} \emptyset,
    \end{align*}
    which is a contradiction to the density of $Y$ in $\Omega$.
\end{proof}

\section{The Skorohod space $\DD([0, \infty))$}\label{sec:skorohod_infty}
For the definition of a metric $d_\infty$ inducing the Skorohod topology on $\DD([0,\infty))$, we follow \cite[Chapter 16]{Billingsley_Probability_Measures}.
Beforehand, for $R>0$ we define the mapping
\begin{align*}
    \psi_R: \DD([0,\infty)) &\longrightarrow \DD([0, R])\\
    \psi_R f(t) &=\begin{cases}
        f(t)\phantom{0(m-t)} \textup{ if } t\leq R-1,\\
        f(t)(R-t)\phantom{0} \textup{ if } R-1<t\leq R.
    \end{cases}
\end{align*}
Note that for any $f\in \DD([0, \infty))$ we can write $\psi_Rf(t) = \varphi_R(t)f(t)$ with the mapping
\begin{align*}
    \varphi_R:[0, R] &\longrightarrow[0, 1]\\
    t&\longmapsto \begin{cases}
        1 \textup{ if } t\leq R-1,\\
        R-t \textup{ else.}
    \end{cases}
\end{align*}

\begin{definition}
    For $f,g\in \DD([0,\infty))$ define
    \begin{align*}
        d_\infty(f,g)\coloneqq \sum_{R=1}^\infty 2^{-R} \min\Big\{ 1, d_R\big(\psi_R(f), \psi_R(g)\big) \Big\}.
    \end{align*}
\end{definition} 

In order to show that $\DD([0, \infty))$ equipped with $d_\infty$ has the input-EDAP, we slightly adapt \Cref{def:A_sigma_R}. Below, we define a mapping $A^{(\infty)}_{\sigma, n}$, which also has a canonical decomposition in terms of an encoder and a decoder. To simplify the notation, we define the clipping function $\clip:\R\times [0, \infty) \to \R$ by
\begin{align*}
    \clip(x, n)\longmapsto\begin{cases}
        n \textup{ \, if \, } x\geq n,\\
        -n \textup{ \, if \, } x\leq -n,\\
        x,\textup{ \, else.}
    \end{cases}
\end{align*}
\begin{definition}\label{def:A_sigma_infty}
    Let $n\in \N$, consider points $0=s_0<s_1<...<s_k$ in $[0, \infty)$, write $\sigma=\{s_0,...,s_k\}$ and define
    \begin{align*}
        A_{\sigma,n}^{(\infty)}: \DD([0, \infty)) &\longrightarrow \DD([0, \infty)) \\
        A_{\sigma,n}^{(\infty)}(f)(t) &\coloneqq \begin{cases}
            \clip(f(s_{i-1}), n)\textup{ \, if \, } t\in [s_{i-1}, s_i) \textup{ \, for \, } i=1,...,k,\\
            \clip(f(s_k), n) \textup{ \, if \, } t\geq s_k.
        \end{cases}
    \end{align*}
    We can write $A^{(\infty)}_{\sigma, n} = D^{(\infty)}_{\sigma}\circ E^{(\infty)}_{\sigma, n}$ where
    \begin{align*}
    E_{\sigma, n}^{(\infty)}(f) &\coloneqq \Big(\clip(f(s_{0}), n), ..., \clip(f(s_{k}), n)\Big)^T,\\
    D_{\sigma}^{(\infty)}(a)(t) &\coloneqq \begin{cases}
        a_{i-1} \textup{ \, if \, } t\in [s_{i-1}, s_i) \textup{ \, for \, } i=1,...,k,\\
        a_k \textup{ \, if \, } t\geq s_k.
    \end{cases}  
\end{align*}
Note that the decoder does not depend on $n$.
\end{definition}

\begin{lemma}\label{lem:skoro_D_lipschitz}
    $D^{(\infty)}_{\sigma}$ is Lipschitz continuous for any $\sigma=\{s_0,s_1,...,s_k\}\subset[0,\infty)$.
\end{lemma}
\begin{proof}
    For better readability, we simply write $D=D_{\sigma}^{(\infty)}$. It holds that 
    \begin{align*}
        d_\infty(D(a), D(b)) &\leq \sum_{R=1}^{\infty}2^{-R} d_R\big(\psi_R[D(a)], \psi_R[D(b)]\big)\\
        &\leq \sum_{R=1}^{\infty}2^{-R} \sup_{t\in [0, R]} \big\vert \varphi_R(t)[D(a)(t)] - \varphi_R(t)[D(b)(t)]\big \vert\\
        &\leq\sum_{R=1}^{\infty}2^{-R} \sup_{t\in [0, R]} \big\vert D(a)(t) - D(b)(t)\big \vert \\
        &\leq \sum_{R=1}^{\infty} 2^{-R} \vert a-b\vert_{\infty} =  \vert a-b\vert_{\infty}.
    \end{align*}
\end{proof}

Next, we prove an analogous result to Lemma \ref{lem:compact_convergence_Asigma} for the space $\DD([0, \infty))$. Again, this result will be the key for approximating the identity by $A^{(\infty)}_{\sigma, n}$ uniformly on compact sets, see Corollary \ref{coro:skorohd_identity_infty}.
\begin{lemma}\label{lem:Asigma_psi_commutes}
    Consider $0=s_0<s_1<s_2<...<s_k$ and write $\sigma=\{s_0,...,s_k\}$. Let $R, \delta>0$ and $n\in \N$. Assume that $R\in \sigma$ and $\vert s_{i-1}- s_i\vert < \delta$ for all $i$.  
    Then for every $f\in \DD([0, \infty))$ it holds that
    \begin{align*}
        &d_R\left(\psi_R(f), \psi_R (A^{(\infty)}_{\sigma,n}(f))\right)\\ \leq &\max\{\delta, \overline{w}_R(\psi_Rf, \delta)\} + \sup_{t\in [0, R]}\Big( \delta\vert f(t) \vert + \vert f(t) -\clip(f(t), n) \vert\Big).
    \end{align*}
\end{lemma}
\begin{proof}
    Define $\sigma_R\coloneqq \{s_i\in \sigma: s_i\leq R\}$ and recall $A^{(R)}_{\sigma_R}$ from Definition \ref{def:A_sigma_R}.
    It follows from Lemma \ref{lem:compact_convergence_Asigma} that 
    \begin{align*}
        d_R\left(\psi_R(f), A^{(R)}_{\sigma_R}\psi_R f\right) \leq \max\{\delta, \overline{w}_R(\psi_Rf, \delta)\}.
    \end{align*}
    Recall the mapping $\varphi_R:[0, R]\to [0,1]$ defined at the beginning of this section, which allows us to rewrite $\psi_R(g)(t) = \varphi_R(t) g(t)$ for $g\in \DD([0, \infty))$.
    Furthermore, we have for $t\in [0, R]$ that
    \begin{align*}
        &\left\vert  A^{(R)}_{\sigma_R}\psi_R f(t) - \psi_R (A^{(\infty)}_{\sigma,n}(f))(t)\right\vert\\ = &
        \begin{cases}
            \big\vert \varphi_R(s_{i-1})f(s_{i-1}) - \varphi_R(t) \clip(f(s_{i-1}), n) \big\vert, \textup{ if } t\in [s_{i-1}, s_i) \textup{ and } s_i\leq  R;\\
            \big\vert \varphi_R(R)f(R) - \varphi_R(R) \clip(f(R), n) \big\vert \textup{ if } t=R
        \end{cases}\\
        \leq & \sup_{t\in [0, R]}\Big(\delta\vert f(t) \vert + \vert f(t) -\clip(f(t), n) \vert\Big),
    \end{align*}
    due to the triangular inequality and the facts that $\varphi_R$ maps into $[0,1]$ and is 1-Lipschitz. Therefore,
    \begin{align*}
        d_R\left(A^{(R)}_{\sigma_R}\psi_R f, \psi_R (A^{(\infty)}_{\sigma,n}(f))\right) 
        &\leq \sup_{t\in [0, R]}\left\vert  A^{(R)}_{\sigma_R}\psi_R f(t) - \psi_R (A^{(\infty)}_{\sigma,n}(f))(t)\right\vert \\
        &\leq \sup_{t\in [0, R]}\Big(\delta\vert f(t) \vert + \vert f(t) -\clip(f(t), n) \vert\Big).
    \end{align*}
    Hence, the claim follows from the triangular inequality.
\end{proof}

The following characterization of relatively compact sets in $\DD([0,\infty))$ can be found in \cite[Theorem 16.4]{Billingsley_Probability_Measures}.
\begin{theorem}\label{thm:precompact_skorohod_Dinfty}
    A set $K\subset \DD([0,\infty))$ is relatively compact w.r.t. $d_\infty$ if and only if for every $R\in \N$, the set $\psi_R(K)$ is relatively compact in $\DD([0, R])$ w.r.t. $d_R$.
\end{theorem}

\begin{corollary}\label{coro:skorohd_identity_infty}
    Let $\delta_n>0$ be a null sequence. Write $\sigma_n=\{ s_0^{(n)},...,s^{(n)}_{k(n)}\}$ after choosing $0=s_0^{(n)}<s_1^{(n)}<...<s_{k(n)}^{(n)}$ which fulfill the following three conditions:
    \begin{itemize}
        \item[(i)] $\sigma_n$ contains all $R\in \N$ with $ R\leq s_{k(n)}^{(n)}$;
        \item[(ii)] $s_{k(n)}^{(n)}$ strictly increases to infinity;
        \item[(iii)] $\vert s_{i-1}^{(n)}- s_i^{(n)}\vert < \delta_n$ for all $i=1,...,k(n).$
    \end{itemize}
    Then for every compact $K\subset \DD([0,\infty))$ it is
    \begin{align*}
        \sup_{f\in K} d_\infty( f, A_{\sigma_n, n}^{(\infty)}f) \xrightarrow{n\to \infty} 0.
    \end{align*}
\end{corollary}
\begin{proof}
    Let $\varepsilon>0$ and choose $R_\varepsilon\in \N$ such that 
    \begin{align*}
        \sum_{R=R_\varepsilon+1}^\infty 2^{-R} < \frac{\varepsilon}{3}.
    \end{align*}
    Since $s_{k(n)}^{(n)}$ strictly increases to infinity (property (ii)), there is $N_\varepsilon>0$ such that for all $n\geq N_\varepsilon$ it is $R_\varepsilon\leq s_{k(n)}^{(n)}$. Therefore, and due to properties (i) and (iii) of $\sigma_n$, we can apply \Cref{lem:Asigma_psi_commutes} for any $R\leq R_\varepsilon$ and $n\geq N_\varepsilon$ to conclude that
    \begin{align*}
        &\sum_{R=1}^{R_\varepsilon}d_R(\psi_Rf, \psi_R A^{(\infty)}_{\sigma_n,n}f) \\ 
        \leq &R_\varepsilon \left( \sup_{t\in [0, R_\varepsilon]}\delta_n\vert f(t)\vert + \vert f(t) - \clip(f(t), n)\vert\right) + \sum_{R=1}^{R_\varepsilon} \max\{\delta_n, \overline{w}_R(\psi_R f, \delta_n)\}.
    \end{align*}
    Let $K\subset \DD([0, \infty))$ be compact. Hence, for every $R>0$, the set $\psi_R(K)$ is relatively compact in $\DD([0, R])$, according to Theorem \ref{thm:precompact_skorohod_Dinfty}. 
    In particular, $\psi_{R_\varepsilon + 1}(K)$ is bounded with respect to the supremum norm, see \Cref{thm:precompact_sets_skorohov}. Therefore, and since $\delta_n$ is a null sequence, there is $N_{K, \varepsilon}>0$ such that for all $n\geq N_{K, \varepsilon}$ it is 
    \begin{align*}
        \sup_{f\in K}R_\varepsilon \left( \sup_{t\in [0, R_\varepsilon]}\delta_n\vert f(t)\vert + \vert f(t) - \clip(f(t), n)\vert\right) = R_\varepsilon \delta_n \sup_{f\in K} \sup_{t\in [0, R_\varepsilon]}\vert f(t)\vert < \frac{\varepsilon}{3}.
    \end{align*}

    By applying Theorem \ref{thm:precompact_sets_skorohov} to all finitely many $R\in \N$ with $R\leq R_\varepsilon$, we conclude that there is some $N'_{K, \varepsilon}>0$ such that for all $n\geq N'_{K, \varepsilon}$ we have 
    \begin{align*}
        \sup_{f\in K} \sum_{R=1}^{R_\varepsilon} \max\{\delta_n, \overline{w}_R(\psi_R f, \delta_n)\} < \frac{\varepsilon}{3},
    \end{align*}
    Therefore, for all $n\geq \max \{N_\varepsilon, N_{K, \varepsilon}, N_{K, \varepsilon}'\}$, it follows that
    \begin{align*}
        \sup_{f\in K} d_\infty(f, A^{(\infty)}_{\sigma_n,n}f) &= \sup_{f\in K} \sum_{R=1}^\infty 2^{-R} \min\{1, d_R(\psi_R f,\psi_R A^{(\infty)}_{\sigma_n,n}f)\}\\
        &\leq \sum_{R=R_\varepsilon+1}^\infty 2^{-R} + \sup_{f\in K} \sum_{R=1}^{R_\varepsilon}d_R(\psi_Rf, \psi_RA^{(\infty)}_{\sigma_n,n}f)< \varepsilon.
    \end{align*}
\end{proof}

Finally, we obtain the desired result.
\begin{theorem}\label{thm:skoro_infty_EDAP}
    $\DD([0, \infty))$ has the input-EDAP.
\end{theorem}
\begin{proof}
    It only remains to show that the encoder $E_{\sigma_n, n}^{(\infty)}$ fulfills property (i) in \Cref{def:input_EDAP}. This follows from the observation that for every $f\in \DD([0, \infty))$ it is
    \begin{align*}
        \vert E_{\sigma_n, n}^{(\infty)} f \vert \leq c_n n,
    \end{align*}
    where $c_n>0$ is such that $\vert x \vert \leq c_n \vert x \vert_{\infty}$ for all $x\in \R^{k(n)}.$
\end{proof}

\begin{remark}
    It is due to \Cref{lem:subspace_EDAP} that for any $b>0$, the set $M_b\coloneqq \{f\in \DD([0, \infty)): \Vert f \Vert_\infty \leq b\}$ equipped with $d_\infty$ has the input-EDAP, too. For $M_b$, one can also omit the clipping of the encoder in \Cref{def:A_sigma_infty}, as this was only required for verifying property (i) in \Cref{def:input_EDAP}.
\end{remark}
\begin{remark}
    On $\DD([0, \infty))$, one could also omit the clipping of the encoders in \Cref{def:A_sigma_infty}. However, the encoders and decoders would only fulfill the weaker requirements of the input-CEDAP, see \Cref{def:input_CEDAP}. Still, this would allow for operator approximation as in statement \labelcref{fig:approximation_types:(A)}, with the Skorohod space being the input space. 
\end{remark}

\section{Approximation by MIONets}\label{sec:proof_mionet}
In this section, we provide a proof of \Cref{thm:mionet}, which applies to low-rank MIONets from \Cref{def:mionets}, when basis encoders are used for the input spaces $\X_i$, and $S$ is a dense subset of $\Y$ that consists of neural networks. Note that it would be sufficient to assume that $\X_1$ and $\X_2$ have the input-CEDAP, respectively. For a simpler notation, we assume them to have the input-EDAP.

Beforehand, we introduce some notation for the tensor products of vector spaces, where we follow \cite[Chapter 1.1]{ryan_tensor_products}.
The tensor product between vector spaces $\Z_1$ and $\Z_2$ is denoted by $\Z_1\otimes\Z_2$. Note that every $u\in \Z_1\otimes\Z_2$ has a representation
\begin{align*}
    u = \sum_{i=1}^p v_i \otimes w_i,
\end{align*}
where $v_i\otimes w_i$ denotes the tensor product of some $v_i\in \Z_1$ and $w_i\in \Z_2.$ 

\begin{proof}[Proof of \Cref{thm:mionet}]
    Let $G\in \CC(\X_1\times \X_2, \Y)$ and consider some compact $K\subseteq \X_1\times \X_2$. First, observe that there are compact $K_1\subseteq \X_1$ and $K_2\subseteq \X_2$ such that $K\subseteq K_1\times K_2$. According to \cite[Chapter 3.2]{ryan_tensor_products}, the mapping
    \begin{align*}
        J: \big(\CC(K_1, \R) \otimes \CC(K_2, \R) \big)\otimes \Y &\longrightarrow \CC(K_1\times K_2, \Y)\\
        J\left( \sum_{i=1}^p (v_i\otimes w_i) \otimes y_i\right)(f_1, f_2) &\coloneqq \sum_{i=1}^p v_i(f_1) w_i(f_2) y_i
    \end{align*}
    has dense range in $\CC(K_1\times K_2, \Y)$, the latter being equipped with the supremum norm. Therefore, for any $\delta>0$, there exists some  $v_i\in \CC(K_1, \R)$, $w_i\in \CC(K_2, \R)$ and $y_i\in \Y$ such that
    \begin{align*}
        \sup_{(f_1, f_2)\in K_1\times K_2}\left\Vert G(f_1, f_2) -  \sum_{i=1}^p v_i(f_1) w_i(f_2)y_i \right \Vert_\Y < \delta.
    \end{align*}
    The claim then follows by approximating $y_i$ by $\psi_i\in S$, as well as applying \Cref{coro:main_dugundji_extension} for the approximation of $v=(v_1,...,v_p)\in \CC(K_1, \R^p)$ and $w=(w_1,...,w_p)\in \CC(K_2, \R^p)$ by a concatenation of $\varphi^{(1)}, \varphi^{(2)}\in \FF$ with suitable encoders from the input-EDAP of $\X_1$ and $\X_2$. Note that on $\R^p$, the decoders in \Cref{coro:main_dugundji_extension} can be chosen as the identity mapping.
\end{proof}

\section{Further useful results}
\begin{lemma}\label{lem:Lipschitz_K_independence}
    Let $(\X, d_\X)$ and $(\Y, d_\Y)$ be metric spaces, where $\X$ is separable. Consider some Lipschitz continuous $F\in \CC(\X, \Y)$. Assume that for every compact $K\subseteq \X$ there is some sequence $(F_{K, n})_{n\in \N}$ in $\CC(\X, \Y)$ which satisfies
    \begin{itemize}
        \item[(i)] All $F_{K, n}$ are Lipschitz continuous with Lipschitz constant $L>0$, which is independent on $n$ and $K$.
        \item[(ii)] It holds that 
        \begin{align*}
            \sup_{x\in K} d_\Y\big(F(x) \, , \, F_{K, n}(x)\big) \xrightarrow{n\to\infty} 0.
        \end{align*}
    \end{itemize}
    Then there exists a strictly increasing sequence $(a_n)_{n\in\N}\subset \N$ as well as a sequence of compact sets $K_n$ such that $F_n \coloneqq F_{K_n, a_n}$ converges uniformly to $F$ on every compact $K$.
\end{lemma}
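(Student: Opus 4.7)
The plan is to exploit the separability of $\X$ to produce a canonical sequence of compact sets that is ``rich enough'' in a countable-density sense, and then to use the uniform Lipschitz bound $L$ to upgrade pointwise control on a dense subset to uniform control on arbitrary compacta.

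First I would fix a countable dense subset $\{x_k\}_{k\in \N}\subset \X$ and define the compact (even finite) sets $K_n\coloneqq \{x_1,\dots,x_n\}$. By assumption (ii) applied to $K_n$, there is some index $m\in \N$ with $\sup_{x\in K_n} d_\Y(F(x),F_{K_n,m}(x))<\tfrac{1}{n}$. Choosing $a_n$ inductively, e.g.\ any such $m$ satisfying in addition $a_n>a_{n-1}$, yields a strictly increasing $(a_n)_{n\in\N}\subset \N$ with
\begin{align*}
    \sup_{x\in K_n} d_\Y\bigl(F(x),F_n(x)\bigr)<\frac{1}{n},\qquad F_n\coloneqq F_{K_n,a_n}.
\end{align*}

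Next I would verify that $F_n\to F$ uniformly on any compact $K\subset\X$. Let $L_F$ denote a Lipschitz constant of $F$, so that $F$ and every $F_n$ is $(L_F+L)$-Lipschitz in a uniform sense. Given $\varepsilon>0$, set $\delta\coloneqq \varepsilon/(L_F+L+1)$. By total boundedness of $K$, cover $K$ by finitely many open balls $B_{\delta/2}(y_1),\dots,B_{\delta/2}(y_M)$ with $y_j\in K$, and by density of $\{x_k\}$ pick $x_{k_j}$ with $d_\X(y_j,x_{k_j})<\delta/2$. Let $N\coloneqq \max_j k_j$; then for every $n\geq N$ the points $x_{k_1},\dots,x_{k_M}$ all lie in $K_n$, and so
\begin{align*}
    \max_{1\leq j\leq M} d_\Y\bigl(F(x_{k_j}),F_n(x_{k_j})\bigr)\leq \frac{1}{n}.
\end{align*}
For arbitrary $x\in K$, choose $j$ with $d_\X(x,x_{k_j})<\delta$. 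Then the triangle inequality together with the Lipschitz bounds gives
\begin{align*}
    d_\Y\bigl(F(x),F_n(x)\bigr)
    &\leq d_\Y\bigl(F(x),F(x_{k_j})\bigr)+d_\Y\bigl(F(x_{k_j}),F_n(x_{k_j})\bigr)+d_\Y\bigl(F_n(x_{k_j}),F_n(x)\bigr)\\
    &\leq (L_F+L)\delta+\frac{1}{n}\leq \varepsilon+\frac{1}{n},
\end{align*}
which is less than $2\varepsilon$ for $n$ sufficiently large, uniformly in $x\in K$.

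I do not expect a serious obstacle here; the key conceptual point is that under the uniform Lipschitz assumption (i) the difference $F-F_n$ is $(L_F+L)$-Lipschitz, so the family $\{F-F_n\}_{n\in\N}$ is equicontinuous, and equicontinuous families that converge to zero on a dense set converge to zero uniformly on compact sets. The only mildly delicate step is ensuring that the indices $a_n$ can be chosen strictly increasing while still realising the $\tfrac{1}{n}$-bound on $K_n$; this is immediate because condition (ii) provides an entire tail of admissible indices.
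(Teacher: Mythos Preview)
Your proof is correct and follows essentially the same strategy as the paper's: define $K_n=\{x_1,\dots,x_n\}$ from a countable dense set, choose strictly increasing indices $a_n$ realizing the $\tfrac{1}{n}$-bound on $K_n$, and then use the uniform Lipschitz constants of $F$ and the $F_n$ together with a finite $\delta$-net to pass from control on the dense set to uniform control on an arbitrary compact $K$. The paper inserts an explicit pointwise-convergence step before the compact-set argument, whereas you go directly to uniform convergence on compacta; this is a minor presentational difference, not a different approach.
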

\begin{proof}
    By separability of $\X$, there exists a sequence $(x_n)_{n\in \N}$ which is dense in $\X$. Define the compact sets $K_n \coloneqq \{x_k: k\leq n\}$. By property (ii) there is some strictly increasing sequence of natural numbers $(a_n)_{n\in\N}$ such that for all $n\in \N$ it holds that
    \begin{align*}
        \sup_{x\in K_n} d_\Y\big( F_{K_n, a_n} (x) \, , \, F(x)\big) \leq \frac{1}{n}.
    \end{align*}
    Define the operators $F_n \coloneqq F_{K_n, a_n}.$ We first show pointwise convergence, so let $x\in \X$ and $\varepsilon>0$.
    Due to density of $\{x_n: n\in \N\}$ and continuity of $F$ in $x$, there exists $i\in \N$ such that
    \begin{equation*}
        d_\X( x\, , \, x_i) \leq \frac{\varepsilon}{3L} \quad \text{and} \quad d_\Y\big( F(x_i) \, , \, F(x)\big) \leq \frac{\varepsilon}{3}.
    \end{equation*}
    Choose $N\in \N$ large enough such that for all $n\in \N$ we have
    \begin{align*}
        x_i\in K_n\quad \text{and} \quad \sup_{x\in K_n} d_\Y\big( F_{n} (x) \, , \, F(x)\big) \leq \frac{\varepsilon}{3}.
    \end{align*}
    It follows then from (i) that
    \begin{align*}
        d_\Y\big( F(x) \, , \, F_n(x) \big) &\leq d_\Y\big( F(x) \, , \, F(x_{i}) \big) + d_\Y\big( F(x_{i}) \, , \, F_n(x_{i}) \big) + d_\Y\big( F_n(x_{i}) \, , \, F_n(x) \big) \\
        &\leq \frac{\varepsilon}{3} + \frac{\varepsilon}{3} + Ld_\X( x_{i} \, , \, x ) \leq \varepsilon,
    \end{align*}
    which shows pointwise convergence of $F_n$ to $F$.
    
    Let $K\subseteq \X$ be compact and $\varepsilon>0$. Let $L_0>0$ be a Lipschitz constant of $F$. By compactness of $K$, there exists $y_1,\dots,y_p\in K$ such that 
    \begin{align*}
        K \subset \bigcup_{i=1}^p B_r(y_i) \textup{\, with radius }r\coloneqq \frac{\varepsilon}{3(L_0+L)}
    \end{align*}
    Since $F_n$ converges pointwise to $F$, choose $N\in \N$ large enough such that for all $n\geq N$ and every $i=1, \dots,p$ it is
    \begin{align*}
        d_\Y\big( F_n(y_i) \, , \, F(y_i) \big) \leq \frac{\varepsilon}{3}.
    \end{align*}
     Given any $x\in K$ there exists $y_{i^*}$ with $i^*\in \{1,\dots,p\}$ such that $d_\X( x\, , \,y_{i^*}) \leq r$. Therefore,
    \begin{align*}
        d_\Y\big( F(x) \, , \, F_n(x) )&\leq d_\Y\big( F(x) \, , \, F(y_{i^*}\big) + d_\Y\big( F(y_{i^*}) \, , \, F_n(y_{i^*}) \big)  + d_\Y\big( F_n(y_{i^*}) \, , \, F_n(x) \big) \\
        &\leq L_0 d_\X( x \, , \, y_{i^*} ) + \frac{\varepsilon}{3} + L d_\X( y_{i^*} \, , \, x) \leq \varepsilon.
    \end{align*}
    Taking the supremum over all $x\in K$ shows that $F_n$ converges uniformly to $F$ on $K$.
    \end{proof}

\begin{table}[!htb]
    \caption{Overview of the provided encoders and decoders in \Cref{sec:approximation_properties}.}
    \label{tab:encoder_decoder_overview}
    \renewcommand\bottomrule{\specialrule{1pt}{2pt}{3pt}}
    \raa{1.1}
    \centering
    \resizebox{0.9\columnwidth}{!}{
    \pgfplotstabletypeset[
        col sep=&, row sep=\\,
        string type,
        every head row/.style={
            before row={
              \toprule 
            },
            after row=\midrule,
        },
        every last row/.style={
            after row=\bottomrule},
        every even row/.style={
            before row={\rowcolor[gray]{0.95}}
        },
        columns/ansatz/.style     ={column name=\textbf{Ansatz},column type=l},
        columns/encoder/.style    ={column name=\textbf{Encoder $E_n$},column type=l},
        columns/decoder/.style    ={column name=\textbf{Decoder $D_n$},column type=l},
        ]{
            ansatz & encoder & decoder \\
            \makecell[{{p{2.5cm}}}]{Schauder Basis\\\scriptsize(\Cref{thm:schauder_basis_BAP})} & 
            \makecell[{{p{6cm}}}]{$
            \begin{aligned}
                & \\[-22pt]
                \X &\longrightarrow \K^n\\[1.8ex]
                f &\longmapsto \big(c_1(f),\dots,c_n(f)\big)^\intercal
            \end{aligned}
            $} &
            \makecell[{{p{4cm}}}]{$
            \begin{aligned}
                & \\[-9pt]
                \K^n &\longrightarrow \X \\
                \mu &\longmapsto \sum_{i=1}^n \mu_i b_i\\[2pt]
            \end{aligned}
            $}\\
            \makecell[{{p{2.5cm}}}]{Frames\\\scriptsize(\Cref{thm:frames})} &
            \makecell[{{p{6cm}}}]{$
            \begin{aligned}
                & \\[-20pt]
                \X &\longrightarrow \K^n\\[1.8ex]
                f &\longmapsto \big( \langle f, f^*_1 \rangle, \dots, \langle f, f^*_n \rangle \big)^\intercal
            \end{aligned}
            $} &
            \makecell[{{p{4cm}}}]{$
            \begin{aligned}
                & \\[-9pt]
                \K^n &\longrightarrow \X \\
                \mu &\longmapsto \sum_{i=1}^n \mu_i f_i\\[2pt]
            \end{aligned}
            $}\\
            \makecell[{{p{2.5cm}}}]{Sampling\\\scriptsize(\Cref{thm:sampling_BAP})} & 
            \makecell[{{p{6cm}}}]{$
            \begin{aligned}
                & \\[-18pt]
                \CC(\Omega, \K) &\longrightarrow \K^{k(n)}\\[0.25cm]
                f &\longmapsto \Big(f(y_1^{(n)}),\dots, f(y_{k(n)}^{(n)})\Big)^\intercal
            \end{aligned}
            $} &
            \makecell[{{p{4cm}}}]{$
            \begin{aligned}
                & \\[-9pt]
                \K^{k(n)} &\longrightarrow \CC(\Omega, \K) \\
                \mu &\longmapsto \sum_{i=1}^{k(n)} \mu_i P_{\frac{1}{n}, i}\\[2pt]
            \end{aligned}
            $}\\
            \makecell[{{p{2.5cm}}}]{Density in\\ Normed Spaces\\\scriptsize(\Cref{thm:Tn_substitute_dense_set})} & 
            \makecell[{{p{6cm}}}]{$
            \begin{aligned}
                & \\[-18pt]
                \X &\longrightarrow \K^{\vert I_n\vert}\\[0.25cm]
                f &\longmapsto \Big(c^{(n)}_1(T_n f),\dots, c^{(n)}_{\vert I_n\vert}(T_n f)\Big)^\intercal
            \end{aligned}
            $} &
            \makecell[{{p{4cm}}}]{$
            \begin{aligned}
                & \\[-9pt]
                \K^{\vert I_n\vert} &\longrightarrow \X \\
                \mu &\longmapsto \sum_{i=1}^{\vert I_n\vert} \mu_i v_i^{(n)}\\[2pt]
            \end{aligned}
            $}\\
            \makecell[{{p{2.5cm}}}]{Density in\\ Hilbert Spaces\\\scriptsize(\Cref{thm:Tn_substitute_dense_set_HilbertSpace})} & 
            \makecell[{{p{6cm}}}]{$
            \begin{aligned}
                & \\[-18pt]
                \X &\longrightarrow \K^{n}\\[0.25cm]
                f &\longmapsto \Big(\langle f, v_1^{(n)}\rangle,\dots,\langle f, v_{n}^{(n)}\rangle\Big)^\intercal
            \end{aligned}
            $} &
            \makecell[{{p{4cm}}}]{$
            \begin{aligned}
                & \\[-9pt]
                \K^{n} &\longrightarrow \X \\
                \mu &\longmapsto \sum_{i=1}^{n} \mu_i v_i^{(n)}\\[2pt]
            \end{aligned}
            $}\\
        }
    }
\end{table}

\section*{Acknowledgements}
Janek Gödeke acknowledges funding by the Deutsches Zentrum für Luft- und Raumfahrt (grant no. 50 EE 2204). Further, Janek Gödeke and Pascal Fernsel acknowledge funding by the Deutsche Forschungsgemeinschaft (DFG, German Research Foundation, Project Number 281474342/GRK2224/2).

We thank Peter Maaß from the University of Bremen and Maarten de Hoop from the Rice University for fruitful discussions about operator learning and approximation. We also thank Nihat Ay for his invitation to TU Hamburg for discussions, and Paweł Przybyłowicz from AGH University of Krakow for his question whether the EDAP-theory is applicable to Skorohod spaces.
Further, we thank Hendrik Vogt from the University of Bremen for our discussions and his contribution to \Cref{thm:X_nonseparableMetricSpace_equivalence}. 

\bibliographystyle{siam}  
\bibliography{references}

@book{Adams_SobolevSpaces,
  author = {Adams, Robert A. and Fournier, John J. F.},
  title = {Sobolev spaces},
  publisher = {Elsevier/Academic Press, Amsterdam},
  year = {2003},
  volume = {140},
  pages = {xiv+305},
  edition = {Second}
}

@book{Ambrosio2005, 
    title={Gradient Flows in Metric Spaces and in the Space of Probability Measures},
    publisher={Birkhäuser Verlag}, 
    author={Ambrosio, Luigi and Gigli, Nicola and Savare, Giuseppe}, 
    year={2005},  
    volume={1},
}

@article{Bilalov_Frames_nonSeparableSpace,
    author = {Bilalov, B. T. and Ismailov, M. I. and Mamedova, Z. V.},
    title = {Uncountable Frames in Non-Separable Hilbert Spaces and their Characterization},
    journal = {Azerbaijan Journal of Mathematics},
    volume = {8},
    number = {1},
    year = {2018}
}

@book{Christensen_Frames,
    title = "An Introduction to Frames and Riesz Bases",
    author = "Christensen, Ole",
    year = "2003",
    language = "English",
    edition = {2},
    doi = {10.1007/978-3-319-25613-9},
    isbn = "0-8176-4295-1",
    publisher = "Birkh{\"a}user Verlag",
}

@book{Dugundji_Topologie, 
    title={Topology},
    publisher={Allan and Bacon, Inc}, 
    author={Dugundji, James}, 
    year={1966},  
    volume={12},
}

@book{Morita_Topologie, 
    title={Topics in Geeneral Topology},
    publisher={Elsevier Science Publishers B.V. (North-Holland)}, 
    author={Morita, Kiiti and Nagata, Jun-iti}, 
    year={1989},  
    volume={41},
}

@book{Engelking_Topologie, 
    title={General Topology},
    publisher={Heldermann Verlag Berlin}, 
    author={Engelking, Ryszard}, 
    year={1989},  
    volume={6},
}

@book{Koethe_TopVecSpaces, 
    title={Topological Vector Spaces II},
    publisher={Springer New York}, 
    author={Köthe, Gottfried}, 
    year={1979},  
    volume={1},
}

@book{Koethe_TopVecSpaces_I, 
    title={Topological Vector Spaces I},
    publisher={Springer Berlin, Heidelberg}, 
    author={Köthe, Gottfried}, 
    year={1983},  
    volume={2},
}

@book{Semadeni_SchauderBases, 
    title={Schauder Bases in Banach Spaces of Continuous Functions},
    publisher={PublisherSpringer Berlin, Heidelberg}, 
    author={Zbigniew Semadeni}, 
    year={1982},  
    volume={1},
}

@book{Kreyszig_FuncAna, 
    title={Introductory Functional Analysis with Applications},
    publisher={John Wiley \& Sons, Inc.}, 
    author={Kreyszig, Erwin}, 
    year={1978},  
    volume={1},
}

@book{Grothendieck, 
    title={Produits tensoriels topologiques et espaces nucleaires},
    publisher={American Mathematical Society}, 
    author={Grothendieck, Alexander}, 
    year={1966},  
    volume={4},
}

@book{Villani_Optimal_Transport, 
    title={Topics in Optimal Transportation},
    publisher={American Mathematical Society}, 
    author={Villani, Cedric}, 
    year={2003},  
    volume={58},
}

@book{Villani_OldNew, 
    title={Optimal transport, old and new},
    publisher={Springer}, 
    author={Villani, Cedric}, 
    year={2009}, 
    volume={338},
    series={Grundlehren der mathematischen Wissenschaften},
    place={Berlin}
}

@book{Chewi_Statistical_OT, 
    title={Statistical Optimal Transport},
    publisher={
    Springer Nature Switzerland}, 
    author={Chewi, Sinho and Niles-Weed, Jonathan and Rigollet, Philippe}, 
    year={2025},  
    edition={1},
    doi={10.1007/978-3-031-85160-5}
}

@book{Billingsley_Probability_Measures, 
    title={Convergence of Probability Measures},
    publisher={John Wiley \& Sons, Inc}, 
    author={Billingsley, Patrick}, 
    year={1999},  
    volume={2},
}

@book{Lindenstrauss_classical_banach_spaces, 
    title={Classical Banach Spaces},
    publisher={Springer Berlin, Heidelberg}, 
    author={Lindenstrauss, Joram and Tzafriri, Lior}, 
    year={1973},  
    volume={1},
}

@book{Bertsekas_convex, 
    title={Convex Optimization Theory},
    publisher={Athena Scientific}, 
    author={Bertsekas, Dimitry P.}, 
    year={2009},  
    volume={1},
}

@book{ryan_tensor_products, 
    title={Introduction to Tensor Products of Banach Spaces},
    publisher={Springer London}, 
    author={Ryan, Raymond A.}, 
    year={2002},  
    volume={1},
}

@book{Meyer_1993, place={Cambridge}, series={Cambridge Studies in Advanced Mathematics}, title={Wavelets and Operators}, publisher={Cambridge University Press}, author={Meyer, Yves}, year={1993}, collection={Cambridge Studies in Advanced Mathematics}}

@article{Dugundji_Extension,
author = {J. Dugundji},
title = {An extension of Tietze's theorem.},
volume = {1},
journal = {Pacific Journal of Mathematics},
number = {3},
publisher = {Pacific Journal of Mathematics, A Non-profit Corporation},
pages = {353--367},
year = {1951},
}

@article{Arens1946,
 ISSN = {0003486X, 19398980},
 URL = {http://www.jstor.org/stable/1969087},
 author = {Richard F. Arens},
 journal = {Annals of Mathematics},
 number = {3},
 pages = {480--495},
 publisher = {[Annals of Mathematics, Trustees of Princeton University on Behalf of the Annals of Mathematics, Mathematics Department, Princeton University]},
 title = {A Topology for Spaces of Transformations},
 urldate = {2025-01-28},
 volume = {47},
 year = {1946}
}

@article{Osipov2018,
author = {Osipov, Alexander},
year = {2018},
month = {11},
pages = {5403-5413},
title = {Selection principles in function spaces with the compact-open topology},
volume = {32},
journal = {Filomat},
doi = {10.2298/FIL1815403O}
}

@article{Caserta2006,
title = {Applications of k-covers II},
journal = {Topology and its Applications},
volume = {153},
number = {17},
pages = {3277-3293},
year = {2006},
note = {Special Issue: Topology and Analysis in Applications},
issn = {0166-8641},
doi = {https://doi.org/10.1016/j.topol.2005.07.015},
url = {https://www.sciencedirect.com/science/article/pii/S0166864106000629},
author = {A. Caserta and G. {Di Maio} and Lj.D.R. Kočinac and E. Meccariello}
}

@article{Gabriyelyan2020,
title = {Topological properties of some function spaces},
journal = {Topology and its Applications},
volume = {279},
pages = {107248},
year = {2020},
issn = {0166-8641},
doi = {https://doi.org/10.1016/j.topol.2020.107248},
url = {https://www.sciencedirect.com/science/article/pii/S0166864120301917},
author = {Saak Gabriyelyan and Alexander V. Osipov}
}

@article{Enflo1973,
author = {Enflo, P.},
year = {1973},
pages = {309–317},
title = {A counterexample to the approximation problem in Banach spaces},
journal = {Acta Mathematica},
volume = {130},
doi = {10.1007/BF02392270}
}

@article{Figiel2011,
author = {Figiel, T. and Johnson, W.B. and Pe{\l}cy\'{n}ski, A.},
year = {2011},
pages = {199-231},
title = {Some approximation properties of Banach spaces and Banach lattices},
journal = {Israel Journal of Mathematics},
volume = {183},
doi = {10.1007/s11856-011-0048-y}
}

@incollection{Casazza2001,
title = {Chapter 7 - Approximation Properties},
editor = {W.B. Johnson and J. Lindenstrauss},
series = {Handbook of the Geometry of Banach Spaces},
publisher = {Elsevier Science B.V.},
volume = {1},
pages = {271-316},
year = {2001},
booktitle = {Handbook of the Geometry of Banach Spaces},
doi = {10.1016/S1874-5849(01)80009-7},
author = {Peter G. Casazza}
}

@article{Godefroy2003_LBAP,
author = {G. Godefroy, N. J. Kalton},
journal = {Studia Mathematica},
number = {1},
pages = {121-141},
title = {Lipschitz-free Banach spaces},
url = {http://eudml.org/doc/285365},
volume = {159},
year = {2003},
}

@article{Godefroy2015_survey_LBAP,
author = {Gilles Godefroy},
journal = {Commentationes Mathematicae},
number = {2},
pages = {89--118},
title = {A survey on Lipschitz-free Banach Spaces},
url = {http://eudml.org/doc/292417},
volume = {55},
year = {2015},
}

@article{Fucik1972,
author = {Svatopluk Fu{\v{c}}{\'i}k and Old{\v{r}}ich John and Jind{\v{r}}ich Ne{\v{c}}as},
journal = {Commentationes Mathematicae Universitatis Carolinae},
number = {1},
pages = {163-175},
publisher = {Charles University in Prague, Faculty of Mathematics and Physics},
title = {On the existence of Schauder bases in Sobolev spaces},
url = {http://eudml.org/doc/16482},
volume = {013},
year = {1972}
}

@article{Cybenko1989,
author = {Cybenko, G},
year = {1989},
pages = {303–314},
title = {Approximation by superpositions of a sigmoidal function},
journal = {Math. Control Signal Systems},
volume = {2},
doi = {10.1007/BF02551274}
}

@article{Hornik1991,
title = {Approximation capabilities of multilayer feedforward networks},
journal = {Neural Networks},
volume = {4},
number = {2},
pages = {251-257},
year = {1991},
issn = {0893-6080},
doi = {https://doi.org/10.1016/0893-6080(91)90009-T},
url = {https://www.sciencedirect.com/science/article/pii/089360809190009T},
author = {Kurt Hornik}
}

@article{Barron1993, title={Universal approximation bounds for superpositions of a sigmoidal function}, volume={39}, DOI={10.1109/18.256500}, number={3}, journal={IEEE Transactions on Information Theory}, author={Barron, A.R.}, year={1993}, pages={930–945}}

@article{Pinkus_1999, title={Approximation theory of the MLP model in neural networks}, volume={8}, DOI={10.1017/S0962492900002919}, journal={Acta Numerica}, author={Pinkus, Allan}, year={1999}, pages={143–195}}

@article{yarotsky2017error,
title = {Error bounds for approximations with deep ReLU networks},
journal = {Neural Networks},
volume = {94},
pages = {103-114},
year = {2017},
issn = {0893-6080},
doi = {https://doi.org/10.1016/j.neunet.2017.07.002},
url = {https://www.sciencedirect.com/science/article/pii/S0893608017301545},
author = {Dmitry Yarotsky}
}

@article{Zhou2020,
title = {Universality of deep convolutional neural networks},
journal = {Applied and Computational Harmonic Analysis},
volume = {48},
number = {2},
pages = {787-794},
year = {2020},
issn = {1063-5203},
doi = {https://doi.org/10.1016/j.acha.2019.06.004},
url = {https://www.sciencedirect.com/science/article/pii/S1063520318302045},
author = {Ding-Xuan Zhou}
}

@article{DeVore2021relu_overview, title={Neural network approximation}, volume={30}, DOI={10.1017/S0962492921000052}, journal={Acta Numerica}, author={DeVore, Ronald and Hanin, Boris and Petrova, Guergana}, year={2021}, pages={327–444}}

@article{Birman_1967,
doi = {10.1070/SM1967v002n03ABEH002343},
url = {https://dx.doi.org/10.1070/SM1967v002n03ABEH002343},
year = {1967},
month = {apr},
publisher = {},
volume = {2},
number = {3},
pages = {295--317},
author = {M. \v{S}. Birman and M. Z. Solomjak},
title = {Piecewise-polynomial approximations of functions of the classes $W_p^\alpha$},
journal = {Mathematics of the USSR-Sbornik},
abstract = {}
}

@article{Anderson2014,
title = {Simultaneous approximation of a multivariate function and its derivatives by multilinear splines},
journal = {Journal of Approximation Theory},
volume = {183},
pages = {82-97},
year = {2014},
issn = {0021-9045},
doi = {https://doi.org/10.1016/j.jat.2014.04.013},
url = {https://www.sciencedirect.com/science/article/pii/S0021904514000835},
author = {Ryan Anderson and Yuliya Babenko and Tetiana Leskevych}
}

@article{Chen1995_approx_thm,
  author={Tianping Chen and Hong Chen},
  journal={IEEE Transactions on Neural Networks}, 
  title={Universal approximation to nonlinear operators by neural networks with arbitrary activation functions and its application to dynamical systems}, 
  year={1995},
  volume={6},
  number={4},
  pages={911-917},
  doi={10.1109/72.392253}
}

@article{Roseberry2022,
title = {Derivative-informed projected neural networks for high-dimensional parametric maps governed by PDEs},
journal = {Computer Methods in Applied Mechanics and Engineering},
volume = {388},
pages = {114199},
year = {2022},
issn = {0045-7825},
doi = {https://doi.org/10.1016/j.cma.2021.114199},
url = {https://www.sciencedirect.com/science/article/pii/S0045782521005302},
author = {Thomas O’Leary-Roseberry and Umberto Villa and Peng Chen and Omar Ghattas}
}

@inproceedings{Anandkumar2019_neuraloperator,
title={Neural Operator: Graph Kernel Network for Partial Differential Equations},
author={Anima Anandkumar and Kamyar Azizzadenesheli and Kaushik Bhattacharya and Nikola Kovachki and Zongyi Li and Burigede Liu and Andrew Stuart},
booktitle={ICLR 2020 Workshop on Integration of Deep Neural Models and Differential Equations},
year={2019},
url={https://openreview.net/forum?id=fg2ZFmXFO3}
}

@article{Tripura2023_WNO_Intro,
title = {Wavelet Neural Operator for solving parametric partial differential equations in computational mechanics problems},
journal = {Computer Methods in Applied Mechanics and Engineering},
volume = {404},
pages = {115783},
year = {2023},
issn = {0045-7825},
doi = {https://doi.org/10.1016/j.cma.2022.115783},
url = {https://www.sciencedirect.com/science/article/pii/S0045782522007393},
author = {Tapas Tripura and Souvik Chakraborty}
}

@article{Bhattacharya2021_PCANet,
author = {Bhattacharya, K. and Hosseini, B. and Kovachki, N. B. and Stuart, A. M.},
year = {2021},
pages = {121-157},
title = {Model Reduction And Neural Networks For Parametric PDEs},
journal = {The SMAI Journal of computational mathematics},
volume = {7},
doi = {10.5802/smai-jcm.74}
}

@inproceedings{
LI2021_FNO_Intro,
title={Fourier Neural Operator for Parametric Partial Differential Equations},
author={Zongyi Li and Nikola Borislavov Kovachki and Kamyar Azizzadenesheli and Burigede liu and Kaushik Bhattacharya and Andrew Stuart and Anima Anandkumar},
booktitle={International Conference on Learning Representations},
year={2021},
url={https://openreview.net/forum?id=c8P9NQVtmnO}
}

@inproceedings{Raonic2023_CNO,
author = {Raoni\'{c}, Bogdan and Molinaro, Roberto and De Ryck, Tim and Rohner, Tobias and Bartolucci, Francesca and Alaifari, Rima and Mishra, Siddhartha and de B\'{e}zenac, Emmanuel},
title = {Convolutional neural operators for robust and accurate learning of PDEs},
year = {2023},
publisher = {Curran Associates Inc.},
address = {Red Hook, NY, USA},
booktitle = {Proceedings of the 37th International Conference on Neural Information Processing Systems},
articleno = {3376},
numpages = {14},
location = {New Orleans, LA, USA},
series = {NIPS '23}
}

@article{Lu2021_DeepONet,
author = {Lu, L. and Jin, P. and Pang, G. and Zhang, Z. and Karniadakis, G.},
year = {2021},
month = {03},
pages = {218–229},
title = {Learning nonlinear operators via DeepONet based on the universal approximation theorem of operators},
journal = {Nature Machine Intelligence},
volume = {3},
doi = {10.1038/s42256-021-00302-5}
}

@article{Lanthaler2022,
    author = {Lanthaler, Samuel and Mishra, Siddhartha and Karniadakis, George E},
    title = {Error estimates for DeepONets: a deep learning framework in infinite dimensions},
    journal = {Transactions of Mathematics and Its Applications},
    volume = {6},
    number = {1},
    pages = {tnac001},
    year = {2022},
    month = {03},
    issn = {2398-4945},
    doi = {10.1093/imatrm/tnac001},
    url = {https://doi.org/10.1093/imatrm/tnac001}
}

@article{Jin2022_MIONET,
author = {Jin, Pengzhan and Meng, Shuai and Lu, Lu},
title = {MIONet: Learning Multiple-Input Operators via Tensor Product},
journal = {SIAM Journal on Scientific Computing},
volume = {44},
number = {6},
pages = {A3490-A3514},
year = {2022},
doi = {10.1137/22M1477751}
}

@article{Hua2023,
title = {Basis operator network: A neural network-based model for learning nonlinear operators via neural basis},
journal = {Neural Networks},
volume = {164},
pages = {21-37},
year = {2023},
issn = {0893-6080},
doi = {https://doi.org/10.1016/j.neunet.2023.04.017},
url = {https://www.sciencedirect.com/science/article/pii/S0893608023002034},
author = {Ning Hua and Wenlian Lu}
}

@inproceedings{furuya2023_injectiveNO,
author = {Furuya, Takashi and Puthawala, Michael and Lassas, Matti and de Hoop, Maarten V.},
title = {Globally injective and bijective neural operators},
year = {2023},
publisher = {Curran Associates Inc.},
address = {Red Hook, NY, USA},
booktitle = {Proceedings of the 37th International Conference on Neural Information Processing Systems},
articleno = {2516},
numpages = {41},
location = {New Orleans, LA, USA},
series = {NIPS '23}
}

@misc{Schwab2023,
      title={Deep Operator Network Approximation Rates for Lipschitz Operators}, 
      author={Christoph Schwab and Andreas Stein and Jakob Zech},
      year={2023},
      eprint={2307.09835},
      archivePrefix={arXiv},
      primaryClass={math.NA},
      url={https://arxiv.org/abs/2307.09835}, 
      howpublished={arXiv:2307.09835}
}

@inproceedings{Seidman2022,
author = {Seidman, Jacob H. and Kissas, Georgios and Perdikaris, Paris and Pappas, George J.},
title = {NOMAD: nonlinear manifold decoders for operator learning},
year = {2022},
isbn = {9781713871088},
publisher = {Curran Associates Inc.},
address = {Red Hook, NY, USA},
booktitle = {Proceedings of the 36th International Conference on Neural Information Processing Systems},
articleno = {405},
numpages = {13},
location = {New Orleans, LA, USA},
series = {NIPS '22}
}

@inproceedings{Bartolucci2023,
author = {Bartolucci, Francesca and de B\'{e}zenac, Emmanuel and Raoni\'{c}, Bogdan and Molinaro, Roberto and Mishra, Siddhartha and Alaifari, Rima},
title = {Representation equivalent neural operators: a framework for alias-free operator learning},
year = {2023},
publisher = {Curran Associates Inc.},
address = {Red Hook, NY, USA},
booktitle = {Proceedings of the 37th International Conference on Neural Information Processing Systems},
articleno = {3051},
numpages = {12},
location = {New Orleans, LA, USA},
series = {NIPS '23}
}

@article{Kovachki2021_approxFNO,
author = {Kovachki, Nikola and Lanthaler, Samuel and Mishra, Siddhartha},
title = {On universal approximation and error bounds for Fourier neural operators},
year = {2021},
issue_date = {January 2021},
publisher = {JMLR.org},
volume = {22},
number = {1},
issn = {1532-4435},
journal = {J. Mach. Learn. Res.},
month = {1},
articleno = {290},
numpages = {76}
}

@article{Lanthaler2024_ANO,
      title={Nonlocality and Nonlinearity Implies Universality in Operator Learning}, 
      author={Samuel Lanthaler and Zongyi Li and Andrew M. Stuart},
      year={2025},
      journal={Constructive Approximation},
      volume={62},
      pages={261-303},
      doi={10.1007/s00365-025-09718-3}
}

@article{Kovachki2023,
author = {Kovachki, Nikola and Li, Zongyi and Liu, Burigede and Azizzadenesheli, Kamyar and Bhattacharya, Kaushik and Stuart, Andrew and Anandkumar, Anima},
title = {Neural operator: learning maps between function spaces with applications to PDEs},
year = {2024},
issue_date = {January 2023},
publisher = {JMLR.org},
volume = {24},
number = {1},
issn = {1532-4435},
journal = {J. Mach. Learn. Res.},
month = {3},
articleno = {89},
numpages = {97}
}

@incollection{Kovachki2024_review,
title = {Chapter 9 - Operator learning: Algorithms and analysis},
editor = {Siddhartha Mishra and Alex Townsend},
series = {Handbook of Numerical Analysis},
publisher = {Elsevier},
volume = {25},
pages = {419-467},
year = {2024},
booktitle = {Numerical Analysis Meets Machine Learning},
issn = {1570-8659},
doi = {https://doi.org/10.1016/bs.hna.2024.05.009},
url = {https://www.sciencedirect.com/science/article/pii/S1570865924000097},
author = {Nikola B. Kovachki and Samuel Lanthaler and Andrew M. Stuart}
}

@misc{kratsios2023,
title={An Approximation Theory for Metric Space-Valued Functions With A View Towards Deep Learning}, 
author={Anastasis Kratsios and Chong Liu and Matti Lassas and Maarten V. de Hoop and Ivan Dokmanić},
year={2023},
eprint={2304.12231},
archivePrefix={arXiv},
primaryClass={cs.LG},
howpublished = {arXiv:2304.12231}
}

@article{Liu2024,
author = {Liu, Hao and Yang, Haizhao and Chen, Minshuo and Zhao, Tuo and Liao, Wenjing},
title = {Deep Nonparametric Estimation of Operators between
Infinite Dimensional Spaces},
year = {2024},
volume = {25},
journal = {Journal of Machine Learning Research},
pages = {1-67}
}

@article{Cohen_DeVore_2015, 
title={Approximation of high-dimensional parametric PDEs},
volume={24}, 
DOI={10.1017/S0962492915000033},
journal={Acta Numerica},
author={Cohen, Albert and DeVore, Ronald},
year={2015},
pages={1–159}}

@article{Kovachki2025_OTNO,
  author  = {Xinyi Li and Zongyi Li and Nikola Kovachki and Zongyi Li and Anima Anandkumar},
  title   = {Geometric Operator Learning with Optimal Transport},
  journal = {Journal of Machine Learning Research},
  year    = {2025},
  volume  = {26},
  pages   = {1--38}
}

@article{NganyuTanyu_2023,
doi = {10.1088/1361-6420/ace9d4},
url = {https://dx.doi.org/10.1088/1361-6420/ace9d4},
year = {2023},
month = {8},
publisher = {IOP Publishing},
volume = {39},
number = {10},
pages = {103001},
author = {Derick Nganyu Tanyu and Jianfeng Ning and Tom Freudenberg and Nick Heilenkötter and Andreas Rademacher and Uwe Iben and Peter Maass},
title = {Deep learning methods for partial differential equations and related parameter identification problems},
journal = {Inverse Problems}
}

@article{Wang2021_PIDeepONet,
author = {Sifan Wang  and Hanwen Wang  and Paris Perdikaris },
title = {Learning the solution operator of parametric partial differential equations with physics-informed DeepONets},
journal = {Science Advances},
volume = {7},
number = {40},
pages = {eabi8605},
year = {2021},
doi = {10.1126/sciadv.abi8605},
URL = {https://www.science.org/doi/abs/10.1126/sciadv.abi8605},
eprint = {https://www.science.org/doi/pdf/10.1126/sciadv.abi8605}
}

@article{Korolev2022,
author = {Korolev, Yury},
title = {Two-Layer Neural Networks with Values in a Banach Space},
journal = {SIAM Journal on Mathematical Analysis},
volume = {54},
number = {6},
pages = {6358-6389},
year = {2022},
doi = {10.1137/21M1458144}
}

@article{Castro2023_ONBs,
title = {The Kolmogorov infinite dimensional equation in a Hilbert space via deep learning methods},
author = {Javier Castro},
journal = {Journal of Mathematical Analysis and Applications},
volume = {527},
number = {2},
pages = {127413},
year = {2023},
doi = {https://doi.org/10.1016/j.jmaa.2023.127413},
url = {https://www.sciencedirect.com/science/article/pii/S0022247X2300416X}
}

@misc{zappala2024_projectionmethod,
title={Projection Methods for Operator Learning and Universal Approximation}, 
author={Emanuele Zappala},
year={2024},
eprint={2406.12264},
archivePrefix={arXiv},
primaryClass={math.NA},
url={https://arxiv.org/abs/2406.12264},
howpublished = {arXiv:2406.12264}
}

@article{Divol2025_stability_brenier,
    author = {Divol, Vincent and Niles-Weed, Jonathan and Pooladian, Aram-Alexandre},
    title = {Tight Stability Bounds for Entropic Brenier Maps},
    journal = {International Mathematics Research Notices},
    volume = {2025},
    number = {7},
    year = {2025},
    month = {04},
    doi = {10.1093/imrn/rnaf078}
}

@article{Delalande2023_stability_brenier,
author = {Alex Delalande and Quentin M{\'e}rigot},
title = {Quantitative stability of optimal transport maps under variations of the target measure},
volume = {172},
journal = {Duke Mathematical Journal},
number = {17},
publisher = {Duke University Press},
pages = {3321--3357},
year = {2023},
doi = {10.1215/00127094-2022-0106},
}

@misc{letrouit2025,
      title={Gluing methods for quantitative stability of optimal transport maps}, 
      author={Cyril Letrouit and Quentin Mérigot},
      year={2025},
      eprint={2411.04908},
      archivePrefix={arXiv},
      primaryClass={math.AP},
      url={https://arxiv.org/abs/2411.04908}, 
      howpublished={arXiv:2411.04908}
}

@misc{letrouit2023_lecturenotes,
  author       = {Cyril Letrouit},
  title        = {Quantitative stability in optimal transport},
  year         = {2023},
  note         = {Lecture notes, University of Somewhere},
  url          = {https://www.imo.universite-paris-saclay.fr/~cyril.letrouit/teaching/Peccotfinal.pdf},
  urldate      = {2026-05-04}
}

@article{Peyre2019,
year = {2019},
volume = {11},
journal = {Foundations and Trends in Machine Learning},
title = {Computational Optimal Transport},
number = {5-6},
pages = {355--607},
author = {Gabriel Peyre and Marco Cuturi}
}

@misc{Pooladian2024,
      title={Entropic estimation of optimal transport maps}, 
      author={Aram-Alexandre Pooladian and Jonathan Niles-Weed},
      year={2024},
      eprint={2109.12004},
      archivePrefix={arXiv},
      primaryClass={math.ST},
      url={https://arxiv.org/abs/2109.12004}, 
      howpublished={arXiv:2109.12004}
}

@article{Carlier2024,
	author = {Carlier, Guillaume and Chizat, L\'ena\"{\i}c and Laborde, Maxime},
	title = {Displacement smoothness of entropic optimal transport},
	DOI= "10.1051/cocv/2024013",
	url= "https://doi.org/10.1051/cocv/2024013",
	journal = {ESAIM: COCV},
	year = 2024,
	volume = 30,
	pages = "25",
}

@article{Wang2012_LOT,
	author = {Wang, Wei and Slep\v{c}ev, Dejan and Basu, Saurav and Ozolek, John A. and Rohde, Gustavo K.},
	title = {A Linear Optimal Transportation Framework for Quantifying and Visualizing Variations in Sets of Images},
	DOI= "10.1007/s11263-012-0566-z",
	journal = {International Journal of Computer Vision},
	year = 2013,
	volume = 101,
	pages = "254-269",
}

@InProceedings{Gracyk_geonet,  
title = {GeONet: a neural operator for learning the Wasserstein geodesic},  
author = {Gracyk, Andrew and Chen, Xiaohui},  
booktitle =  {Proceedings of the Fortieth Conference on Uncertainty in Artificial Intelligence},  
pages =  {1453--1478},  
year =  {2024},  
editor =  {Kiyavash, Negar and Mooij, Joris M.},  
volume =  {244},  
series =  {Proceedings of Machine Learning Research},  
month =  {15--19 Jul},  
publisher =    {PMLR},  
url =  {https://proceedings.mlr.press/v244/gracyk24a.html},  
}

@article{Benth_2022,
	author = {Benth, F. E. and Detering, N. and Galimberti, L.},
	title = {Neural Networks in Fr\'echet Spaces},
	DOI= "10.1007/s10472-022-09824-z",
	journal = {Ann Math Artif Intell},
    pages = {75--103},
    volume={91},
	year = {2023},
}

@article{Galimberti_2026,
	author = {Galimberti, L. and Kratsios, A. and Livieri, G.},
	title = {Designing Universal Causal Deep Learning Models: The Case of Infinite-Dimensional Dynamical Systems from Stochastic Analysis},
	DOI= "10.1007/s00365-026-09745-8",
	journal = {Constructive Approximation},
	year = 2026,
}

@misc{furuya2024,
      title={Transformers are Universal In-context Learners}, 
      author={Takashi Furuya and Maarten V. de Hoop and Gabriel Peyré},
      year={2024},
      eprint={2408.01367},
      archivePrefix={arXiv},
      primaryClass={cs.CL},
      howpublished = {arXiv:2408.01367},
      url={https://arxiv.org/abs/2408.01367}, 
}

@misc{galimberti2024,
      title={Neural networks in non-metric spaces}, 
      author={Luca Galimberti},
      year={2024},
      eprint={2406.09310},
      archivePrefix={arXiv},
      primaryClass={math.FA},
      url={https://arxiv.org/abs/2406.09310}, 
}

\end{document}